\newtheorem{theorem}{Theorem}[section]
\newaliascnt{lemma}{theorem}
\newtheorem{lemma}[lemma]{Lemma}
\newaliascnt{remark}{theorem}
\newtheorem{remark}[remark]{Remark}
\newaliascnt{corollary}{theorem}
\newtheorem{corollary}[corollary]{Corollary}
\newaliascnt{proposition}{theorem}
\newtheorem{proposition}[proposition]{Proposition}
\newaliascnt{question}{theorem}
\newtheorem{question}[question]{Question}
\newaliascnt{fact}{theorem}
\newtheorem{fact}[fact]{Fact}
\newaliascnt{conjecture}{theorem}
\newtheorem{conjecture}[conjecture]{Conjecture}
\theoremstyle{definition}
\newaliascnt{example}{theorem}
\newtheorem{example}[example]{Example}
\newaliascnt{definition}{theorem}
\newtheorem{definition}[definition]{Definition}
\crefname{question}{question}{questions}
\Crefname{question}{Question}{Questions}
\crefname{conjecture}{conjecture}{Conjectures}
\crefname{fact}{fact}{facts}
\newcommand{\cupdot}{\mathbin{\mathaccent\cdot\cup}}
\newcommand{\twopartdef}[4]
{
	\left\{
	\begin{array}{ll}
		#1 & \mbox{if } #2 \\
		#3 & \mbox{if } #4
	\end{array}
	\right.
}
\newcommand{\cM}{\mathcal{M}}
\newcommand{\cN}{\mathcal{N}}
\newcommand{\cL}{\mathcal{L}}
\newcommand{\cU}{\mathcal{U}}
\newcommand{\cK}{\mathcal{K}}
\newcommand{\bN}{\mathbb{N}}
\newcommand{\bZ}{\mathbb{Z}}
\newcommand{\bR}{\mathbb{R}}
\newcommand{\vphi}{\varphi}
\DeclareMathOperator{\rank}{rank}
\DeclareMathOperator{\Def}{Def}
\newcommand{\iiff}{\leftrightarrow}
\title{Pseudo-finite sets, pseudo-o-minimality}
\author[Nadav Meir]{Nadav Meir} \thanks{The work in this paper is part of the author's Ph.D. studies at the Department of Mathematics, Ben-Gurion University of the Negev under the supervision of Assaf Hasson.} \thanks{The author was partially supported by ISF Grant 181/60 and the Hillel Gauchman scholarship.}
\email{mein@math.bgu.ac.il}
\address{Department of Mathematics,
	Ben Gurion University of the Negev \\
	P.O.B. 653, 
	Be'er Sheva 8410501, Israel.
}
\newcommand{\leqnomode}{\tagsleft@true}
\newcommand{\reqnomode}{\tagsleft@false}
\subjclass[2000]{03C64}
\keywords{Definably complete, Type complete, pseudo-o-minimal, o-minimalism, pigeonhole principle}
\begin{document}
	\reqnomode
	\begin{abstract}	
We give an example of two ordered structures $\cM,\cN$ in the same language $\cL$ with the same universe, the same order and admitting the same one-variable definable subsets such that $\cM$ is a model of the common theory of o-minimal $\cL$-structures and $\cN$ admits a definable, closed, bounded, and discrete subset and a definable injective self-mapping of that subset which is not surjective. This answers negatively two questions by Schoutens; the first being whether there is an axiomatization of the common theory of o-minimal structures in a given language by conditions on one-variable definable sets alone. The second being whether definable completeness and type completeness imply the pigeonhole principle. It also partially answers a question by Fornasiero asking whether definable completeness of an expansion of a real closed field implies the pigeonhole principle.  
	\end{abstract}
	\maketitle

\setcounter{theorem}{0}

\section{Introduction}
o-minimality is not preserved under ultraproducts, as shown in the following example:
\begin{example}\label{exampleNonUltra}
	Let $\cL=\Set{<, U}$ where $<$ is a binary relation symbol and $U$ is a unary predicate.
	For every $n\in \bN$, let $\cM_n$ be a structure interpreting $<$ as a dense linear order without end points and $U$ as a set of points of size $n$. Then each $\cM_n$ is o-minimal. But for any non-principal ultrafilter $\cU$ on $\bN$, in the ultraproduct $\prod_{\bN} \cM_n / \cU$, the definable set $U$ is infinite and discrete, thus the ultraproduct of o-minimal structures need not be o-minimal.
\end{example}
\Cref{exampleNonUltra} can be generalized to any first-order language $\cL\supsetneq \{<\}$. So
By \L{}os' Theorem, given a first-order language $\cL\supsetneq \{<\}$, there is no first-order theory $T$, such that $\cM\models T\iff \cM$ is o-minimal for every $\cL$-structure $\cM$.

Here we focus our attention on some properties implied by o-minimality which are first-order, i.e., those properties which both hold in all o-minimal structures, and, given a language $\cL=\Set{<,\dots}$, can be axiomatized by a set of $\cL$-sentences. Rigorously, we follow the conventions from \cite{ominimalism}, defined below:

\begin{definition}
	Given a language $\cL=\Set{<,\dots}$, let $T^{omin}_{\cL}$ be the set of all $\cL$-sentences satisfied in every o-minimal $\cL$-structure.
	
	An $\cL$-structure $\cM$ for $\cL=\Set{<,\dots}$ is \emph{pseudo-o-minimal} if $\cM\models T_{\cL}^{omin}$.
\end{definition}
\begin{fact}[\text{\cite[Corollary 10.2]{ominimalism}}]
	An $\cL$-structure $\cM$ for $\cL=\Set{<,\dots}$ is pseudo-o-minimal if and only if $\cM$ is elementarily equivalent to an ultraproduct of o-minimal structures.
\end{fact}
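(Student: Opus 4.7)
The plan is to prove both directions, with the forward direction being an essentially routine Łoś/compactness argument and the reverse direction being immediate.

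\textbf{Easy direction.} Suppose $\cM$ is elementarily equivalent to an ultraproduct $\cN = \prod_{i\in I} \cM_i / \cU$ where each $\cM_i$ is o-minimal. If $\varphi \in T^{omin}_\cL$, then $\cM_i \models \varphi$ for every $i\in I$, so by Łoś's theorem $\cN \models \varphi$, and hence $\cM \models \varphi$. Thus $\cM \models T^{omin}_\cL$, i.e., $\cM$ is pseudo-o-minimal.

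\textbf{Hard direction.} Suppose $\cM$ is pseudo-o-minimal; I would show that every finite fragment of $T:=\Th(\cM)$ has an o-minimal model, then glue such models into an ultraproduct. Concretely, let $\Delta \subseteq T$ be finite, and set $\varphi_\Delta := \bigwedge \Delta$. If no o-minimal $\cL$-structure satisfied $\varphi_\Delta$, then $\neg\varphi_\Delta$ would hold in every o-minimal $\cL$-structure, whence $\neg\varphi_\Delta \in T^{omin}_\cL$. Since $\cM$ is pseudo-o-minimal this would force $\cM \models \neg\varphi_\Delta$, contradicting $\Delta \subseteq \Th(\cM)$. So for each finite $\Delta \subseteq T$, I can choose an o-minimal $\cM_\Delta \models \Delta$.

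\textbf{Ultrafilter assembly.} Let $I$ be the set of finite subsets of $T$, and for each $\Delta \in I$ set $\widehat{\Delta} := \{\Delta' \in I : \Delta \subseteq \Delta'\}$. The family $\{\widehat{\Delta} : \Delta \in I\}$ has the finite intersection property (since $\widehat{\Delta_1}\cap\widehat{\Delta_2}\supseteq\widehat{\Delta_1\cup\Delta_2}$), so it extends to an ultrafilter $\cU$ on $I$. For each $\varphi \in T$, the set $\{\Delta \in I : \cM_\Delta \models \varphi\}$ contains $\widehat{\{\varphi\}} \in \cU$, so by Łoś's theorem the ultraproduct $\cN := \prod_{\Delta \in I} \cM_\Delta / \cU$ satisfies $\varphi$. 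Hence $\cN \models T$, so $\cM \equiv \cN$, and $\cN$ is an ultraproduct of o-minimal structures, as required.

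\textbf{Main obstacle.} There is no real obstacle: the whole argument is a textbook compactness/ultrafilter packaging, and the only content is the observation that $\neg(\bigwedge \Delta) \in T^{omin}_\cL$ whenever $\bigwedge \Delta$ fails in every o-minimal structure. The mild subtlety is simply making sure $T^{omin}_\cL$ is used for the right reason, namely to transfer finite fragments of $\Th(\cM)$ back to o-minimal witnesses.
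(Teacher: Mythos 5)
This fact is not proved in the paper at all; it is quoted from Schoutens \cite[Corollary 10.2]{ominimalism}, so there is no internal proof to compare against. Your argument is correct and is the standard one for results of this type: the forward (pseudo-o-minimal) direction via the observation that any finite fragment of $\Th(\cM)$ must have an o-minimal model (else its negated conjunction would lie in $T^{omin}_{\cL}$), followed by the usual ultrafilter-on-finite-subsets assembly and \L{}os' theorem, and the converse by \L{}os' theorem directly; the only points worth making explicit are that o-minimal $\cL$-structures exist (so the empty fragment has a witness) and that completeness of $\Th(\cM)$ is what upgrades ``the ultraproduct models $\Th(\cM)$'' to elementary equivalence, both of which are immediate.
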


The following two definitions are examples of first-order weakenings of o-minimality.

\begin{definition}
	An expansion of a dense linear order without endpoints $\cM = \Braket{M;<,\dots}$ is \emph{definably complete} if every definable subset of $M$ has a least upper bound.
\end{definition}
\begin{definition}
	An expansion of a dense linear order without endpoints $\cM = \Braket{M;<,\dots}$ is \emph{locally o-minimal} if for any definable subset $A\subseteq M$ and any $a\in M$ there are $b_1,b_2\in M$ such that $b_1<a<b_2$ and  if $I=(b_1,a)$ or $(a,b_2)$ then either $I\subset A$ or $I\cap A=\emptyset$.
\end{definition}
Notice that both definable completeness and local o-minimality, in a given language $\cL$, are axiomatized by first-order schemes which hold in any o-minimal structure. Thus, any pseudo-o-minimal $\cL$-structure is definably complete and locally o-minimal.

Fornasiero, Hieronymi, Miller, Schoutens, Servi and others proved many tameness properties for definably complete and for locally o-minimal structures. (See, e.g,  \cite{Miller,Hieronymi1,For1,Servi,For2,Hieronymi2,ominimalism,ForHier}.) Citing all tameness properties proved in this area will be longer than this paper, so we give two elementary examples by Miller:

\begin{fact}[\text{\cite[Corollary 1.5]{Miller}}]\label{MillerTFAE}
	Let $\cM = \Braket{M;<,\dots}$ be an expansion of a dense linear order without endpoints. Then the following are equivalent:
	\begin{enumerate}
		\item \label{MillerDC} $\cM$ is definably complete.
		\item $\cM$ has the \emph{intermediate value property}, i.e., the image of an interval under a definable continuous map is an interval. 
		\item Intervals in $M$ are definably connected, i.e. for every interval $A\subseteq M$ and every disjoint open definable subsets $U,V\subseteq M$, if $A=(A\cap U) \cup (A\cap V)$, then either $A\cap U=\emptyset$ or $A\cap V=\emptyset$.
		\item $M$ is definably connected.
	\end{enumerate}
\end{fact}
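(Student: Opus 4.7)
The plan is to close the cycle $(1) \Rightarrow (2) \Rightarrow (3) \Rightarrow (4) \Rightarrow (1)$, which keeps each step short.

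For $(1) \Rightarrow (2)$ I would run the classical supremum argument. Given a definable continuous $f:[a,b]\to M$ and $c$ strictly between $f(a)$ and $f(b)$, say $f(a)<c<f(b)$, the set $S=\{x\in[a,b]:f(x)<c\}$ is definable, nonempty, and bounded above. By $(1)$ it has a least upper bound $s$. Standard $\varepsilon$-style manipulations using continuity (but phrased purely order-theoretically, since we are in an ordered structure with no metric) rule out both $f(s)<c$ and $f(s)>c$, leaving $f(s)=c$.

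For $(2) \Rightarrow (3)$, suppose $A$ is an interval and $A = (A\cap U)\cup (A\cap V)$ with $U,V$ disjoint, open, and definable, with both intersections nonempty. Pick $a\in A\cap U$ and $b\in A\cap V$; by symmetry assume $a<b$, so $[a,b]\subseteq A$. Define $f:[a,b]\to M$ by $f(x)=a$ if $x\in U$ and $f(x)=b$ if $x\in V$. This $f$ is definable, and it is continuous because the preimage of any open set is a union of the relatively open sets $[a,b]\cap U$ and $[a,b]\cap V$. Now $f(a)=a$ and $f(b)=b$, but density supplies some $c\in M$ with $a<c<b$, and $c\notin \{a,b\}=f([a,b])$, contradicting $(2)$. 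Step $(3) \Rightarrow (4)$ is then just the observation that $M=(-\infty,\infty)$ is itself an interval.

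Finally, for $(4) \Rightarrow (1)$, suppose toward contradiction that $S\subseteq M$ is definable, nonempty, bounded above, with no least upper bound. Set $B=\{x\in M:\forall s\in S\ (s\leq x)\}$ and $U=M\setminus B=\{x\in M:\exists s\in S\ (x<s)\}$. Both are definable, and both are nonempty (the former since $S$ is bounded above, the latter since $S$ is nonempty and the order has no left endpoint). The set $U$ is open because if $x<s\in S$ then $(-\infty,s)\subseteq U$ is an open neighborhood of $x$. The set $B$ is open because $B$ has no minimum (that would be the missing supremum), so for each $x\in B$ there is $y\in B$ with $y<x$, and then $(y,\infty)\subseteq B$ is an open neighborhood of $x$. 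Hence $\{U,B\}$ is a definable open partition of $M$ into nonempty pieces, contradicting $(4)$.

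The only step with any real content is $(2) \Rightarrow (3)$, and the key trick there is realizing that the obvious ``characteristic function'' of the alleged separation, rescaled to take the actual endpoint values $a$ and $b$, is automatically definably continuous and exhibits a missing intermediate value. Everything else reduces to bookkeeping with openness and definability.
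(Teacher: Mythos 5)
This statement is quoted in the paper as a Fact from Miller's paper (his Corollary 1.5), so there is no proof in the paper to compare against; your argument has to stand on its own, and it essentially does. The cycle $(1)\Rightarrow(2)\Rightarrow(3)\Rightarrow(4)\Rightarrow(1)$ is the standard route: the supremum argument for the intermediate value property, the two-valued ``characteristic function'' of a definable open separation to contradict the IVP, and the definable set of upper bounds of a counterexample to definable completeness serving as one half of a definable clopen partition. Two small bookkeeping points, both about conventions rather than substance. First, in $(1)\Rightarrow(2)$ you only verify that the image is convex; to conclude that it is literally an interval (with endpoints in $M\cup\{\pm\infty\}$) add that the image is definable, so definable completeness supplies its supremum and infimum. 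Second, in $(2)\Rightarrow(3)$ you apply the IVP to the closed interval $[a,b]$; if ``interval'' is read as open (as in Miller's conventions), use the domain $(a,b)\subseteq A$ instead, noting that openness of $U$ and $V$ forces points of $(a,b)$ near $a$ into $U$ and points near $b$ into $V$, so both values $a$ and $b$ are still attained and the image $\{a,b\}$ is still not an interval. (A similar remark applies to $(3)\Rightarrow(4)$ if $M$ itself is not counted as an interval: given a separation of $M$, pick $a\in U$, $b\in V$ and apply $(3)$ to an interval containing both.) None of this affects the correctness of the overall argument.
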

\begin{fact}[\text{\cite[Proposition 1.10]{Miller}}]\label{MillerMapping}
	Let $\cM=\Braket{M;<,\dots}$ be definably complete. Let $f:A\to M^n$ be definable and continuous with $A$ closed and bounded. Then $f(A)$ is closed and bounded. 
	In particular, If $f:A\to M$ is definable and continuous with $A$ closed and bounded, then $f$ achieves a maximum and a minimum on $A$.
\end{fact}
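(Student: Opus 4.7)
The plan is to first prove the attainment clause (that a continuous definable $g\colon A\to M$ on a closed bounded $A\subseteq M^k$ achieves a maximum and a minimum), and then derive the statements about $f(A)$ from that.

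For attainment I would induct on the dimension $k$, with the base case $k=1$ being the heart of the argument. Given $A\subseteq [c_1,c_2]$ closed and $g\colon A\to M$ continuous, I first establish that $g$ is bounded by considering the definable, downward-closed set
\[
B = \{\, x\in[c_1,c_2] : g \text{ is bounded on } A\cap[c_1,x] \,\},
\]
letting $t=\sup B$ by definable completeness, and case-analysing on whether $t\in A$. If $t\in A$, continuity of $g$ at $t$ bounds $g$ on a neighborhood of $t$, which, combined with a bound just below $t$ witnessed by an element of $B$ less than $t$, extends the bound just past $t$, contradicting $t=\sup B$ unless $t=c_2\in B$. If $t\notin A$, then $A$ being closed gives an open interval around $t$ disjoint from $A$, and density of the order lets me push past $t$. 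To show $s=\sup g(A)$ is attained, I assume it is not and run an analogous sup-argument on the definable set $\{\,x\in[c_1,c_2] : \sup g(A\cap[c_1,x])<s\,\}$. For the inductive step $k>1$, I slice $A$ along the first coordinate; by the inductive hypothesis, $g$ attains a maximum $G(x_1)$ on each nonempty closed-bounded slice $A_{x_1}\subseteq M^{k-1}$, and I run the one-dimensional argument on the definable function $G$ defined on a bounded subset of $M$.

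Given attainment, boundedness of $f(A)\subseteq M^n$ follows by applying attainment to each coordinate $f_i\colon A\to M$. Closedness is proved by induction on $n$. For $n=1$, if $y\notin f(A)$ then the sets $A^+=\{x\in A:f(x)>y\}$ and $A^-=\{x\in A:f(x)<y\}$ coincide, respectively, with $\{f\geq y\}$ and $\{f\leq y\}$ and are therefore both closed in $A$; attainment yields $\min f(A^+)>y$ and $\max f(A^-)<y$, giving an open interval around $y$ disjoint from $f(A)$. For the inductive step, write $f=(f',f_n)$ with $f'\colon A\to M^{n-1}$, and $y=(y',y_n)$. If $y'\notin f'(A)$, apply the inductive hypothesis to $f'$. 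Otherwise, apply the one-dimensional closedness statement to $f_n$ on the closed bounded set $A_{y'}=\{x\in A:f'(x)=y'\}$ to produce a gap $(\alpha,\beta)$ around $y_n$ disjoint from $f_n(A_{y'})$; then, after shrinking to $\alpha<\alpha'<y_n<\beta'<\beta$, apply the inductive hypothesis to $f'$ on the closed bounded set $C=\{x\in A:f_n(x)\in[\alpha',\beta']\}$ to produce an open box in $M^{n-1}$ around $y'$ disjoint from $f'(C)$, and combine the two gaps into an open box separating $y$ from $f(A)$.

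The main obstacle I anticipate is the careful case analysis in the one-dimensional sup arguments, since $\cM$ lacks field structure: ``halving'' intervals is unavailable, so I must rely repeatedly on density of the order to produce auxiliary points, and strict versus non-strict inequalities must be tracked throughout. A second subtlety is verifying, in the inductive step of the attainment proof, that the sup-function $G$ is definable on the appropriate bounded subset of $M$ and satisfies enough continuity-like behavior to feed back into the one-dimensional argument; this will require exploiting the closedness of $A$ to rule out pathologies on the boundary of the domain of $G$, e.g.\ where slices abruptly become empty.
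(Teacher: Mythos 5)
You are proving a statement the paper itself does not prove (it is quoted as a Fact from Miller), so the comparison here is against a correct argument rather than against the paper's text. Your base case and your reductions are fine: the one-variable sup arguments for boundedness and for attainment of the supremum are exactly the standard definable-completeness arguments and are carried out correctly; and granting attainment for closed bounded subsets of $M^k$, your derivation of boundedness of $f(A)$ coordinatewise, of closedness for $n=1$ via $\min f(A^+)$ and $\max f(A^-)$, and the induction on $n$ using the auxiliary set $C=\{x\in A: f_n(x)\in[\alpha',\beta']\}$ all check out.

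The genuine gap is in the inductive step of the attainment proof. You propose to ``run the one-dimensional argument on $G(x_1)=\max\{g(x_1,y):y\in A_{x_1}\}$'', but the one-dimensional argument used two hypotheses that $G$ does not obviously satisfy: continuity of the function at the point $t=\sup B$, and closedness of its domain. The domain of $G$ is $\pi_1(A)$, and the assertion that $\pi_1(A)$ is closed is itself an instance of the very theorem you are proving (the projection is a continuous definable map), so it cannot be assumed; and $G$ is in general discontinuous (already over $\mathbb{R}$: take $A=([0,1]\times\{0\})\cup(\{1\}\times[0,1])$, $g(x,y)=y$). What the argument actually needs is upper semicontinuity of $G$ together with closedness of $\pi_1(A)$, and over $\mathbb{R}$ both come from compactness via the tube lemma -- a tool you do not have here, and which cannot be replaced by pointwise continuity of $g$ on the slice, since the needed bound must be uniform over an infinite slice. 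You flag this as a ``subtlety'' to be handled later, but it is the mathematical heart of the multi-dimensional case and no argument is offered. It can be closed, for instance, by first proving a separate lemma by a one-variable sup argument along the dropped coordinate: if $A\subseteq M^{k}$ is definable, closed and bounded, then its projection forgetting one coordinate is closed (consider the definable set of $t$ in a bounding interval such that some box around the target point crosses $[e_1,t]$ without meeting $A$, and take its supremum). That lemma yields both that $\pi_1(A)$ is closed and, applied to the closed bounded sets $A\cap\{g\geq v\}$, that $G$ is upper semicontinuous; one must then also rerun your one-dimensional argument for semicontinuous functions (and treat the minimum by reversing the order, since there is no map $x\mapsto -x$ available). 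Without some such ingredient, the induction on $k$ as written does not go through.
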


In \cite{ominimalism}, Schoutens presented a strengthening of local o-minimality by the name of \emph{type completeness}, as defined below. In a sense this strengthening extends the locality to $\pm \infty$:
\begin{definition}
	An expansion of a dense linear order without endpoints $\cM = \Braket{M;<,\dots}$ is \emph{type complete} if it is locally o-minimal and, in addition, for any definable subset $A\subseteq M$ there are $c_1,c_2\in M$ such that if $I=(-\infty,c_1)$ or $(c_2,+\infty)$, then either $I\subset A$ or $I\cap A=\emptyset$.
\end{definition}
Type completeness is a first-order scheme, and therefore satisfied by any pseudo-o-minimal structure. 

Several tameness results were proved for definably complete type complete structures in \cite{ominimalism}. For example, a version of o-minimal cell decomposition called \emph{quasi-cell decomposition} (\cite[Theorem 8.10]{ominimalism}) and the following monotonicity theorem:
\begin{fact}[\text{\cite[Theorem 3.2]{ominimalism}}]
	Let $\cM=\Braket{M;<,\dots}$ be a definably complete type complete structure. The set of discontinuities of a one-variable definable
	map $f: Y\to M$ is discrete, closed, and bounded, and consists entirely of jump
	discontinuities. Moreover, there is a definable discrete, closed, bounded subset $D\subseteq Y$
	so that in between any two consecutive points of $D\cup\Set{\pm \infty}$, the map is monotone,
	that is to say, either strictly increasing, strictly decreasing, or constant.
\end{fact}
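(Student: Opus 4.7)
The plan is to proceed in stages, using local o-minimality to extract one-sided behavior at each point, definable completeness to convert suprema into actual values, and type completeness to control behavior at $\pm\infty$. First I would show that for every $a \in Y$ the one-sided limits $f(a^-), f(a^+) \in M \cup \{\pm\infty\}$ exist. Local o-minimality applied to the definable family $\{x : f(x) > t\}$ gives, for each $t$, some interval $(b, a)$ on which either $f > t$ throughout or $f \le t$ throughout. Hence $T_a^- := \{t \in M : f > t \text{ on some } (b, a)\}$ is a downward-closed definable subset of $M$, so $f(a^-) := \sup T_a^-$ exists in $M \cup \{\pm\infty\}$ by definable completeness; the right limit is analogous. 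With one-sided limits in hand, every discontinuity is automatically a jump.

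Next I would show that the discontinuity set $D_0 := \{a \in Y : f(a^-) \ne f(a) \text{ or } f(a^+) \ne f(a)\}$ is closed, discrete, and bounded. Closedness uses that the set of continuity points is open, a direct consequence of local o-minimality applied to $D_0^c$. For discreteness, suppose toward contradiction that $a$ is a left-accumulation point of $D_0$. Applying local o-minimality to $D_0$ forces some $(b, a) \subseteq D_0$, so every point in this interval is a jump discontinuity. Applying local o-minimality again to $\{x : f(x^+) > f(x^-)\}$ and $\{x : f(x^+) < f(x^-)\}$ reduces to a sub-interval on which all jumps have a consistent direction, say $f(x^+) > f(x^-)$. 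A further quantitative iteration, combining local o-minimality with definable completeness, yields a definable witness to a uniform lower bound on the jump size on a smaller sub-interval, and this forces $f$ to oscillate on the left of $a$ in a way incompatible with the existence of $f(a^-)$. Boundedness of $D_0$ follows by a symmetric argument using type completeness at $\pm\infty$: a tail $(c, +\infty)$ entirely in $D_0$ is ruled out by the same oscillation argument, and type completeness forces the alternative that some tail avoids $D_0$.

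Third, for the monotonicity decomposition, restrict to the open definable set $Y \setminus D_0$, on which $f$ is continuous. Partition it via the three definable subsets corresponding to $f$ being locally strictly increasing, locally strictly decreasing, or locally constant at a point; continuity together with local o-minimality forces these three sets to cover $Y \setminus D_0$ up to a discrete closed set of transition points (an oscillation between two different monotonicity types would again contradict the local "all or nothing" behavior). Adjoin these transition points to $D_0$ to obtain the desired $D$; type completeness together with the earlier boundedness gives that $D$ remains bounded, and the desired monotonicity on each component of the complement of $D \cup \{\pm\infty\}$ follows by construction.

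The main obstacle is the discreteness of $D_0$. Local o-minimality by itself gives only the "all or nothing" dichotomy, so ruling out an interval of consecutive jump discontinuities requires a genuinely quantitative argument combining local o-minimality with definable completeness. This is delicate because $M$ has no assumed group structure: "nontrivial jump" must be phrased purely order-theoretically, and the contradiction at the limit point $a$ must be extracted solely from suprema of definable sets guaranteed by definable completeness.
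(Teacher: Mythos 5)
You should first note that the paper does not prove this statement at all: it is quoted as a Fact from Schoutens \cite[Theorem 3.2]{ominimalism}, so your attempt can only be judged on its own merits. Your first stage is correct and is the standard argument: for each $t$, local o-minimality applied to $\{x : f(x) > t\}$ makes $T_a^- = \{t : f > t \text{ on some } (b,a)\}$ a definable downward-closed set, and definable completeness (with type completeness handling $\pm\infty$) gives the one-sided limits in $M\cup\{\pm\infty\}$. Likewise, the reductions ``accumulation point of $D_0$ forces an interval inside $D_0$'' and ``a tail inside $D_0$ is excluded via type completeness'' are the right skeleton.

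However, the two steps that carry the real weight of the theorem are asserted rather than proved. (i) Discreteness reduces, as you say, to excluding an interval $(b,a)\subseteq D_0$ of jump discontinuities, but your route --- extract a ``uniform lower bound on the jump size'' and contradict the existence of $f(a^-)$ --- does not work as described: without a group operation ``jump size'' has no meaning (you flag this yourself but offer no replacement), you never exhibit the definable set whose supremum would witness uniformity, and in any case $f(a^-)$ \emph{does} exist by your own step 1, so the only possible contradiction is that some one-sided limit fails to exist inside the interval; an interval of unidirectional jumps does not visibly force that in a non-archimedean order, since no summation or countability argument (the classical proof that jump discontinuities of a real function are countable uses separability) is available. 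This is precisely the main obstacle you name, so the key step is missing, not merely compressed. (ii) Your decomposition of $Y\setminus D_0$ presupposes that every continuity point is locally strictly increasing, locally strictly decreasing, or locally constant up to a discrete closed set of ``transition points''. Local o-minimality at a point $a$ only gives an interval $(a,b)$ on which $f$ is entirely $>f(a)$, $<f(a)$, or $=f(a)$, which is strictly weaker than local monotonicity; the points belonging to none of your three classes (strict local extrema, or continuous oscillation as in $x\sin(1/x)$ at $0$) are exactly what must be excluded by a definable-completeness supremum argument, i.e.\ the heart of the monotonicity theorem, and ``oscillation between two monotonicity types contradicts the all-or-nothing behavior'' does not address them. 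Two smaller points: splitting only into $\{f(x^+)>f(x^-)\}$ and $\{f(x^+)<f(x^-)\}$ omits removable-type discontinuities with $f(x^-)=f(x^+)\neq f(x)$, and ``every discontinuity is automatically a jump'' needs the one-sided limits to lie in $M$ rather than $\pm\infty$ if ``jump'' is read in the usual sense.
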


Of particular importance in the study of definably complete structures are the definable \emph{pseudo-finite} sets, as defined below.
\begin{definition}
	Let $\cM=\Braket{M;<,\dots}$ be a definably complete structure. A definable subset $A\subset M^n$ is \emph{pseudo-finite} if it is closed, bounded, and discrete.
\end{definition}
These definable sets play a role in each of the papers cited above. We follow the convention in \cite{For1, For2}, where there is an extensive study of pseudo-finite sets and their tameness properties. In \cite{For2}, the wording was justified in the definably complete context by saying that pseudo-finite sets are first-order analogue of finite subsets of $\bR^n$, with evidence given by numerous tameness properties of such sets.

One must not confuse \emph{pseudo-finite} sets defined above with \emph{pseudo-$o$-finite} sets, as we define below, coined in \cite{ominimalism}. Though, as we will see in \Cref{pseudo iff pseudoo} the two definitions coincide if $\cM$ is assumed to be pseudo-o-minimal.
\begin{definition}
	Let $\cM=\Braket{M;<,\dots}$ be a pseudo-o-minimal structure. A definable set $X\subseteq M^n$ is \emph{pseudo-$o$-finite} if $(\cM,X)$ satisfies the common theory of o-minimal structures expanded by a unary predicate for a distinguished finite subset.
\end{definition}
The following \namecref{pseudo iff pseudoo} can be immediately extracted from \cite[Corollary 12.6]{ominimalism} together with \cite[Theorem 12.7]{ominimalism}.
\begin{fact}\label{pseudo iff pseudoo}
	Let $\cM=\Braket{M;<,\dots}$ be a pseudo-o-minimal structure. A definable set $A\subset M^n$ is pseudo-finite if and only if it is pseudo-$o$-finite.
\end{fact}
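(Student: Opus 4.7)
My plan is to prove the two directions separately, with the forward direction (pseudo-$o$-finite $\Rightarrow$ pseudo-finite) being essentially trivial and the converse being where the cited results of Schoutens do the work.

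For the forward direction I would observe that each of closedness, boundedness and discreteness of the distinguished set is expressible by a single $(\cL \cup \{U\})$-sentence, where $U$ is the unary predicate added to name the set. All three sentences are trivially satisfied in every pair $(\cO, F)$ with $\cO$ o-minimal and $F \subseteq O^n$ finite, so they belong to the common theory of o-minimal structures expanded by a predicate for a finite distinguished subset. Hence any pseudo-$o$-finite $A$ in $\cM$ satisfies them, i.e., $A$ is closed, bounded, and discrete, so pseudo-finite.

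For the converse, the natural model-theoretic strategy is as follows: $\cM$ is elementarily equivalent to some ultraproduct $\prod_i \cO_i / \cU$ of o-minimal structures, and in a sufficiently saturated elementary extension a pseudo-finite $A$ may be represented as the interpretation of a formula whose components $A_i$ in the $\cO_i$ are themselves closed, bounded and discrete, hence finite by o-minimality. \L{}os' theorem would then make the ultraproduct pair $\bigl(\prod_i \cO_i / \cU,\ \prod_i A_i / \cU\bigr)$ pseudo-$o$-finite, and transferring back across the elementary equivalence of pairs would conclude. This entire transfer is packaged in the two cited results: I expect \cite[Theorem 12.7]{ominimalism} to supply an intrinsic first-order criterion for pseudo-$o$-finiteness in a single pseudo-o-minimal structure, and \cite[Corollary 12.6]{ominimalism} to verify that this criterion is automatic for every closed, bounded and discrete definable subset. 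Composing them delivers pseudo-finite $\Rightarrow$ pseudo-$o$-finite for $\cM$.

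The main obstacle, which is absorbed entirely into the two cited results, is exactly this transfer: every universal first-order property enjoyed by finite distinguished subsets of o-minimal structures must be shown to be inherited by the closed, bounded, discrete definable subsets of pseudo-o-minimal structures. Once this is in hand via \cite[Theorem 12.7]{ominimalism} and \cite[Corollary 12.6]{ominimalism}, the extraction is purely formal, which is why the fact is stated as an immediate consequence rather than proved in detail.
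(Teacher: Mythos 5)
Your proposal matches the paper's treatment: the paper offers no argument of its own, saying only that the fact is ``immediately extracted'' from Corollary 12.6 together with Theorem 12.7 of Schoutens' o-minimalism paper, which is exactly where you place the nontrivial direction, and your direct argument for the easy direction (closedness, boundedness and discreteness are each single sentences about the pair, true of every finite distinguished subset of an o-minimal structure, hence in the common theory) is correct. The only caveat is that your heuristic sketch of the converse via ``elementary equivalence of pairs'' would, if carried out, have to be run as a transfer of first-order schemes quantifying over the defining parameters (since being closed, bounded and discrete is a definable condition on the parameter, one shows each relevant sentence holds for all such parameters in the ultraproduct and then transfers the universally quantified statement); but as you defer that direction to the two cited results, precisely as the paper does, this is not a gap.
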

A tameness property of pseudo-finite sets occurring naturally is ``the discrete pigeonhole principle'' \cite{ominimalism}. (Or just ``the pigeonhole principle'' in \cite{For1, For2}.)
\begin{definition}
	An expansion of a dense linear order without endpoints $\cM = \Braket{M;<,\dots}$ has the \emph{pigeonhole principle} if for any pseudo-finite $X\subset K^n$ and definable $f:X\to X$, if f is injective, then it is surjective.
\end{definition}
We remark that the pigeonhole principle can be formulated as ``every pseudo-finite set is definably Dedekind finite'', and as this is a first-order scheme, every pseudo-o-minimal structure has the pigeonhole principle.

In \cite{For1} and \cite{For2}, Fornasiero conjectured the following:
\begin{conjecture}\label{conjecture Fornasiero}
	If $\cK = $ $\Braket{K,+,\cdot,<,\dots}$ is a definably complete expansion of a real closed field, then $\cK$ has the pigeonhole principle.
\end{conjecture}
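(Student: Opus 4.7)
Since the paper's construction will show that definable completeness together with type completeness (in an arbitrary expansion of a dense linear order) does not imply the pigeonhole principle, any proof of Fornasiero's conjecture must use the real closed field axioms in an essential way. The plan is to first reduce to pseudo-finite subsets of $K$, and then to exploit the field structure to establish an ``essential monotonicity'' for definable injections on such sets.

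\emph{Reduction to dimension one.} Given pseudo-finite $X \subset K^n$ and a definable injection $f: X \to X$, I would construct a definable injection $\iota: X \hookrightarrow K$. The natural candidate is a linear functional $\ell_v(x) = \langle v, x\rangle$ for a suitable $v \in K^n$. The set of ``bad'' directions (those for which $\ell_v$ identifies some pair of distinct points of $X$) is a definable union of hyperplanes parametrized by the pseudo-finite set $(X-X)\setminus\{0\}$; using definable completeness together with the field structure (e.g.\ taking $v$ far enough along a generic curve), one can extract a $v$ avoiding all of them. Conjugating $f$ by such an $\iota$ reduces everything to the one-dimensional case.

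\emph{The one-dimensional case.} For pseudo-finite $X \subset K$, discreteness and definable completeness supply a definable successor function $\suc(x) = \inf\{y \in X : y > x\}$ on $X \setminus \{\max X\}$, making $X$ into a definable discrete ``interval'' with endpoints $\min X$ and $\max X$. The goal is to show that any definable injection $f: X \to X$ admits a pseudo-finite partition of $X$ into convex blocks on each of which $f$ preserves or reverses the successor structure. Granting this, a direct endpoint bookkeeping argument, matching endpoints of blocks with endpoints of their images under $f$, forces $f$ to be surjective on each block, hence on all of $X$.

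\emph{Main obstacle.} The crux is proving essential monotonicity using only definable completeness and the real closed field axioms. In the type complete setting the cited monotonicity theorem would do the job immediately, but the paper's counterexample shows that without field structure this is too weak, so the field axioms must enter in a non-trivial way. A promising attack is to consider the telescoping identity
\[
\sum_{x \in X \setminus \{\max X\}} \bigl(f(\suc(x)) - f(x)\bigr) \;=\; f(\max X) - f(\min X),
\]
and to use it to bound the number of ``descent'' pairs of $f$. Turning this formal sum into a rigorous argument — in effect, defining a well-behaved counting measure on pseudo-finite sets over a real closed field that is preserved by definable bijections — appears to be precisely the heart of Fornasiero's conjecture, and is exactly the step where the paper's counterexample (which lacks the field structure) breaks down.
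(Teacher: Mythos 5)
You are attempting to prove a statement that the paper does not prove: \Cref{conjecture Fornasiero} is quoted as Fornasiero's conjecture, the paper's contribution is a \emph{partial negative} result (a definably complete, type complete expansion of a dense linear order without the pigeonhole principle), and the paper explicitly states that the field case remains open. So there is no paper proof to compare against, and your text is a proof plan rather than a proof: you yourself flag the ``main obstacle'' (a counting measure on pseudo-finite sets preserved by definable bijections, or some substitute for it) as unresolved, and that obstacle is not a technical detail but is the entire content of the conjecture.

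Beyond the admitted gap, the step you present as routine is in fact false at the stated level of generality, and the paper's own construction witnesses this. In $\cM_2$ the set $Z$ is pseudo-finite and $f\upharpoonright Z$ is a definable injection which is an order isomorphism on each of the two convex blocks $Z\cap[c_1,c_2]$ and $Z\cap(c_2,c_4]$, with images the convex pieces $Z\cap[c_3,c_4]$ and $Z\cap[c_1,P(c_3))$; it is maximally ``essentially monotone'' (two blocks, successor structure preserved on each), yet it misses $P(c_3)$. Hence your claim that, granting a pseudo-finite partition into convex blocks on which $f$ respects the successor structure, ``endpoint bookkeeping'' forces surjectivity cannot be right: on each block $f$ is trivially onto its image, and the real issue is whether the images exhaust $X$, which is exactly where a discrete closed bounded set can be order-isomorphic to a proper convex-by-convex rearrangement of itself. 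So the field axioms must enter not only in the monotonicity step but also in the final counting step, and your plan gives no mechanism for either. (The dimension-reduction step is the most plausible part, but even there the fact that a pseudo-finite union of hyperplanes cannot cover $K^n$ needs a definable Baire-type property of definably complete expansions of ordered fields, which you would have to invoke or prove.) As it stands, the proposal does not establish the conjecture, which, per the paper, remains open.
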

This conjecture remained open even for $\cK$ a definably complete expansion of a dense linear order. Clearly, the conjecture holds for $\cK$ pseudo-o-minimal. Consequently, it is connected to two other questions asked by Schoutens in \cite{ominimalism}:
\begin{question}\label{DCTCpigeonhole}
	Does every definably complete type complete structure have the pigeonhole principle?
\end{question}
\begin{question}\label{IsThereAnAxiomatization}
	Is there an axiomatization of pseudo-o-minimality by first-order conditions
	on one-variable formulae only?
\end{question}

To clarify the meaning of a \emph{first-order conditions
on one-variable formulae only}, this does not mean a first-order sentence conditioned on a specific one-variable formula, as the following example demonstrates how any first-order theory is axiomatized by such sentences, in particular $T^{omin}$.
\begin{example}
	Let $\cL$ be any language and $T$ be any $\cL$-theory (not necessarily complete). For every sentence $\sigma\in T$, let $\psi_\sigma(x):= x=x\land \sigma$ and let $\phi_\sigma := \exists x\, \psi_\sigma(x)$. Then $\phi_\sigma$ is a first order condition on $\psi_\sigma$, however $\vdash \psi_\sigma\iff \sigma$, so $\Set{\psi_\sigma | \sigma \in T}$ is an axiomatization of $T$.
\end{example}

Clearly, this is not the intended meaning in the question. Rather, following the terminology of \cite{ominimalism}, we interpret a \emph{first-order condition
	on one-variable formulae} as first-order scheme ranging over all one-variable formulae. Rigorously, a first-order condition on one-variable formulae is obtained as follows:
	\begin{itemize}
		\item Let $\tau$ be a first-order sentence in the language $\Set{<,U}$ where $U$ is a unary predicate.
		\item Let $\Phi$ be the set of partitioned $\cL$-formulae $\vphi(x;\bar{y})$ where $x$ is a single variable and $\bar{y}$ is a finite tuple of variables not appearing in $\tau$.
		\item For every $\vphi(x;\bar{y})\in \Phi$, let $\tau_\vphi(x;\bar{y})$ be the $\cL$-formula obtained by replacing any instance of $U(x)$ by $\vphi(x;\bar{y})$.
		\item $T_\tau := \Set{ \forall \bar{y} \,\tau_\vphi(x;\bar{y}) | \vphi(x;\bar{y}) \in \Phi} $.
	\end{itemize} 
	For example, definable completeness is axiomatized in the above fashion by setting $\tau$ to be
	 \begin{align*}
	  & \exists v\, \forall w\, \left(U(w)\rightarrow w<v\right) \rightarrow \\
	 &		  \exists v \left( \forall w\, \left(U(w)\rightarrow w<v\right) \land \forall v'\left(\forall w\, \left(U(w)\rightarrow w<v\right)\right)\rightarrow v\leq v' \right).
	 \end{align*}
		Namely, $\tau$ is the $\Set{<,U}$-sentence stating if $U$ is bounded, then it has a least upper bound.  Following the same terminology, an axiomatization of pseudo-o-minimality by first-order conditions	on one-variable formulae only  is an $\cL$-theory $T'$ such that \[ T^{omin}\supseteq T'\supseteq \cup\Set{T_\tau | \tau \text{ is an $\Set{<,U}$-sentence}}. \]
		In \cite{Rennet}, Rennet showed that there is no \emph{recursive} first-order axiomatization of pseudo-o-minimality in the language of rings $\Set{+,-,\cdot,0,1}$.
		In particular, as definable completeness and type completeness are both recursive first-order schemes, given a recursive language, they cannot axiomatize pseudo-o-minimality.

In this paper, we show a stronger result (with respect to one-variable definable sets) by constructing two ordered structures $\cM, \cN$ on the same universe, in the same language, with the same definable subsets in one variable, where $\cM$ is pseudo-o-minimal and $\cN$ does not have the pigeonhole principle. This gives a negative answer to both \Cref{DCTCpigeonhole,IsThereAnAxiomatization}, as well as a partial answer to \Cref{conjecture Fornasiero}. Furthermore, this gives a stronger result then a negative answer to \Cref{IsThereAnAxiomatization}. It shows that not only is there no first order axiomatization $T'$ as above, but also there is no second order theory in the language $\cL_{\Def}:=\Set{<, \Def}$ where $\Def$ is a unary predicate on subsets interpreted as the definable subsets. This result is strictly stronger as any axiomatization $T'$ as above is equivalent to a second order theory in $\cL_{\Def}$, but not vice-versa.

This also implies that there is no result analogous to \Cref{pseudo iff pseudoo} in the theory of definably complete type complete structures, namely there is a definably complete type complete structure $\cM$ and a pseudo-finite subset $X\subset M$ such that $(\cM,X)$ does not satisfy the common theory of definably complete type complete structures expanded by a unary predicate for a finite set.

It is still open whether we can extend this result to the case where $\cM_0$ is an expansion of a real closed field and fully answer \Cref{conjecture Fornasiero}.

\subsection*{Acknowledgements} The author is grateful to Phillip Hieronymi for presenting the question which motivated this paper, as well as for the fruitful discussions. The author is grateful to Assaf Hasson for the fruitful discussions and the warm support along the way. The author thanks Itay Kaplan for his helpful comments on a preliminary version of this paper and for the discussion which helped clarify and strengthen the main result.

\subsection*{Outline} The construction is done as follows: In \Cref{sectionDefT0T1}, the theory $T_0$ is constructed as an expansion of a dense linear without endpoints by a predicate for a discrete,  closed, and bounded set $Z$ and some extra structure in the language $\cL_0$ such that $T_0\supseteq T_{\cL_0}^{omin}$. We then introduce an expansion $\cL_1\supset\cL_0$ and $T_1\supset T_0$ an $\cL_1$-theory containing a function symbol $f$ which is bijective on $Z$. We show $T_0$ and $T_1$ are consistent. In \Cref{secT0QE} we prove quantifier elimination for $T_0$. 

In \Cref{secT2T1}, we give the construction of $\cM_2$ which will be an expansion of some model $\cM_0$ of $T_0$ to $\cL_1$ with the same one-variable definable sets as $\cM_0$ such that $\cM_2$ does not have the pigeonhole principle. This is done by tweaking a given model $\cM_1$ of $T_1$ expanding $\cM_0$ so that $f$ is now injective but not surjective. It is done carefully enough, so that any definable set in $\cM_2$ differs from a set definable in $\cM_1$ by finitely many constant terms. In \Cref{secT2QE} we show quantifier elimination in $\cM_2$ and deduce that any definable subset of $\cM_2$ is definable in $\cM_0$. We then define $\cM$ to be a trivial expansion of $\cM_0$ to $\cL_0$ and $\cN$ to be $\cM_2$ and show that $\cM,\cN$ possess the properties proclaimed in the introduction.

\section{Preliminaries - cyclic orders}\label{secCyclic}

In this \namecref{secCyclic}, we present the standard definition of a \emph{cyclic order}, as defined below, and present some of its properties needed for the construction following.

\begin{definition}\label{defCyclic}
	A \emph{cyclic order} on a set $A$ is a ternary relation $C$ satisfying the following axioms:
	\begin{enumerate}
		\item Cyclicity: If $C(a,b,c)$, then $C(b,c,a)$.
		\item Asymmetry: If $C(a,b,c)$, then not $C(c,b,a)$.
		\item Transitivity: If $C(a,b,c)$ and $C(a,c,d)$, then $C(a,b,d)$.
		\item Totality: If $a,b,c$ are distinct, then either $C(a,b,c)$ or $C(c,b,a)$.
	\end{enumerate}
\end{definition}

The following fact is folklore (e.g., \cite{Cyc1} and \cite[Part I, \S 4]{Cyc2}) and can be easily verified:
\begin{fact}
	If $\Braket{A,<}$ is a linearly  ordered set, then the relation defined by
	\[ C_<(a,b,c) \iff (a<b<c) \lor (b<c<a) \lor (c<a<b) \]
	is a cyclic order on $A$.
	
	\noindent \emph{We call $C_<$ the \emph{cyclic order induced by $<$}.}
\end{fact}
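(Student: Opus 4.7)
My plan is to verify, in turn, each of the four axioms of \Cref{defCyclic} for $C_<$, using only the strict linearity of $<$. I would take the axioms in the order cyclicity, totality, asymmetry, transitivity, since each successive one requires slightly more bookkeeping.

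For \emph{cyclicity}, each of the three disjuncts defining $C_<(a,b,c)$ — namely $a<b<c$, $b<c<a$, $c<a<b$ — appears literally (after renaming) as a disjunct defining $C_<(b,c,a)$, so the implication is immediate by inspection. For \emph{totality}, the strict linearity of $<$ on three pairwise distinct elements $a,b,c$ produces exactly the six linear orderings of $\Set{a,b,c}$; three of them are disjuncts of $C_<(a,b,c)$ and the remaining three are disjuncts of $C_<(c,b,a)$, so one of the two must hold.

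For \emph{asymmetry}, I would show that the conjunction $C_<(a,b,c) \land C_<(c,b,a)$ is inconsistent by going through the nine pairings of their disjuncts: in each pairing one obtains either some $x<y$ together with $y<x$, or a cyclic chain $x<y<z<x$, both of which contradict $<$ being a strict linear order.

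The one step requiring genuine case enumeration is \emph{transitivity}. Assuming $C_<(a,b,c)$ and $C_<(a,c,d)$ gives nine combinations. Five of them force $c<a$ and $a<c$ simultaneously and are therefore vacuous. The four remaining cases — $a<b<c$ with $a<c<d$ (yielding the chain $a<b<c<d$, hence $a<b<d$), $b<c<a$ with $c<d<a$ (yielding $b<c<d<a$, hence $b<d<a$), $a<b<c$ with $d<a<c$ (giving $d<a<b$), and $c<a<b$ with $c<d<a$ (again giving $d<a<b$) — each deliver one of the three disjuncts of $C_<(a,b,d)$ after a single application of transitivity of $<$. Since no step uses anything beyond strict linearity, the only real obstacle is keeping track of the nine cases cleanly; once this is organized, the verification is mechanical.
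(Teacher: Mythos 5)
Your verification is correct: the nine-case analysis for transitivity is complete (the five cases you discard do indeed each force $a<c$ and $c<a$ simultaneously, and the four surviving cases each yield a disjunct of $C_<(a,b,d)$), and the checks of cyclicity, asymmetry, and totality are accurate. The paper offers no proof of this fact, citing it as folklore that ``can be easily verified,'' and your argument is exactly the routine verification being alluded to, so there is nothing to reconcile.
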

\begin{definition}
	Let $(X,<)$ be a linearly ordered set. A $<$-cut in $X$ is a pair of subsets $(A,B)$ of $X$ such that $X = A\cupdot B$ and $a<b$ for every $a\in A, b\in B$.
\end{definition}
\begin{fact}[\text{\cite[Lemma 3.8]{CyclicCUTS}}]\label{factCuts}
	Let $X$ be a set with two linear orders, $<_1,<_2$, on $X$. Let $(A_1,B_1)$ be a $<_1$-cut in $X$ and $(A_2,B_2)$ be a $<_2$-cut in $X$. If $(A_1,<_1)\cong (B_2,<_2)$ and $(B_1,<_1)\cong (A_2,<_2)$, then $C_{<_1}=C_{<_2}$.
\end{fact}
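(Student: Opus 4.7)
The plan is to read the hypothesis as identifying the orders $<_1$ and $<_2$ up to a ``rotation'' at the cut, and then verify elementwise that this rotation preserves the induced cyclic order. Concretely, I would interpret $(A_1,<_1)\cong(B_2,<_2)$ and $(B_1,<_1)\cong(A_2,<_2)$ as saying that $A_1=B_2$ and $B_1=A_2$ as subsets of $X$, with $<_1$ and $<_2$ agreeing on each common piece; equivalently, $<_2$ is obtained from $<_1$ by moving $A_1$ from below the cut to above, so that $B_1=A_2$ now precedes $A_1=B_2$ while each piece keeps its internal order. This identification is essential: abstract order-isomorphism alone is not enough, as one checks on a four-element example with $<_1$ given by $1<2<3<4$ and $<_2$ given by $3<4<2<1$, so the reading must be the natural identity-of-subsets one.

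The verification then consists in checking $C_{<_1}(a,b,c)\iff C_{<_2}(a,b,c)$ for an arbitrary distinct triple $a,b,c\in X$, by case analysis on how the triple is distributed across the common partition. If all three lie in the same piece, then each $C_{<_i}(a,b,c)$ depends only on the restriction of $<_i$ to that piece, and the restrictions coincide. Otherwise, up to symmetry, two of the elements, say $a,b$, lie in $A_1=B_2$ and one, say $c$, in $B_1=A_2$. Under $<_1$, the element $c$ is the $<_1$-maximum of the triple, so the three disjunctions in the definition of $C_{<_1}$ collapse to $a<_1 b<_1 c$, giving $C_{<_1}(a,b,c)\iff a<_1 b$. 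Under $<_2$, however, $c$ is the $<_2$-minimum of the triple, so the only surviving arrangement is $c<_2 a<_2 b$, which by cyclicity yields $C_{<_2}(a,b,c)\iff a<_2 b$. Since the orders agree on $A_1=B_2$, the two conditions are equivalent, and the remaining asymmetric case is treated identically.

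The only real obstacle is making sure the asymmetric roles of ``upper'' and ``lower'' played by the two common sets in the two orders are tracked correctly; once that bookkeeping is done, cyclicity does all the remaining work and yields $C_{<_1}=C_{<_2}$.
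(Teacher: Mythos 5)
The paper gives no proof of this statement at all: it is imported as a cited Fact, so your argument stands on its own, and it is correct. Your preliminary point is also right and worth making: read with ``$\cong$'' as abstract order-isomorphism, the statement as displayed is false, and your four-element example genuinely refutes it (both cuts split $X$ into two two-element chains, so the hypotheses hold, yet $C_{<_1}(1,2,3)$ holds while $C_{<_2}(1,2,3)$ fails); under the abstract reading one could only conclude that $C_{<_1}$ and $C_{<_2}$ are \emph{isomorphic}, by gluing the two piece-isomorphisms into a bijection of $X$, not that they are equal. With the intended reading $A_1=B_2$, $B_1=A_2$ and the two orders agreeing on each piece (so $<_2$ is the rotation of $<_1$ at the cut), your verification is complete: every distinct triple has at least two points in one piece; the same-piece case is immediate since $C_{<_i}$ on such a triple only consults the common restriction; in the mixed case, cyclic invariance of $C_<$ in its arguments lets you put the singleton in the last slot, where the three defining disjuncts collapse to a single comparison inside one piece, on which the orders agree. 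This is exactly the standard rotation argument.

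One caveat to flag: the way the Fact is later invoked in \Cref{fPreservesCyclic} uses non-identity isomorphisms (given by $f$, between different sub-intervals of $Z$), which does not literally match the identity-of-pieces reading your proof requires. The application survives nonetheless, because the order $<'$ defined there is precisely the rotation of $<$ at the cut $\left([c_1,c_3)\cap Z,\,[c_3,c_4]\cap Z\right)$, so your version of the Fact, applied to that cut rather than to $\left([c_1,c_2]\cap Z,\,(c_2,c_4]\cap Z\right)$, yields $C_{<'}=C_<$ directly; it would be worth saying this explicitly if your formulation is the one adopted.
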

\begin{definition}
	Let $C$ be a cyclic order on a set $A$. For any $a,b\in A$, denote
	\[ C(a,-,b):=\Set{x\in A | C(a,x,b)}. \]
\end{definition}
\begin{lemma}\label{cyclicSeperation}
	Let $C$ be a cyclic order on a set $A$ and let $a,b,c\in A$. If $C(a,b,c)$ then
	\[ C(a,-,c) = C(a,-,b)\cup \Set{b}\cup C(b,-,c). \]
\end{lemma}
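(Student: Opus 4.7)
The plan is to prove both inclusions separately, with the heavier (but still routine) work being the reverse inclusion.

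For $\supseteq$, I would verify each of the three pieces of the right-hand side lies in $C(a,-,c)$. The singleton $\{b\}$ is immediate from the hypothesis $C(a,b,c)$. For $x\in C(a,-,b)$, Transitivity applied to $C(a,x,b)$ and $C(a,b,c)$ gives $C(a,x,c)$ directly. For $x\in C(b,-,c)$, I would first rotate: applying Cyclicity twice to the hypothesis gives $C(c,a,b)$, and applying it twice to $C(b,x,c)$ gives $C(c,b,x)$; then Transitivity with pivot $c$ yields $C(c,a,x)$, and one more Cyclicity step recovers $C(a,x,c)$, as required.

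For $\subseteq$, let $x\in C(a,-,c)$. A standard preliminary observation, combining Cyclicity with Asymmetry, rules out coincidences such as $x=a$, $x=c$, or $a=c$; together with the distinctness of $a,b,c$ forced by $C(a,b,c)$, this means the triples to which I apply Totality will always consist of distinct elements. If $x=b$, the conclusion is immediate, so suppose $x\neq b$ and assume toward a contradiction that neither $C(a,x,b)$ nor $C(b,x,c)$ holds. Totality applied to $\{a,x,b\}$ forces $C(b,x,a)$, which after rotation is $C(x,a,b)$; Totality applied to $\{b,x,c\}$ forces $C(c,x,b)$, which after rotation is $C(x,b,c)$. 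Transitivity with pivot $x$ then gives $C(x,a,c)$, and one more Cyclicity step turns this into $C(c,x,a)$, which contradicts $C(a,x,c)$ by Asymmetry.

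The only real content is repeatedly applying Cyclicity to line up the triples so that Transitivity is applicable; the bookkeeping of rotations is the main thing to get right, and I do not expect any substantive obstacle beyond that.
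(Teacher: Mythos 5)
Your proposal is correct and follows essentially the same route as the paper: the $\supseteq$ direction uses the identical cyclicity/transitivity rotations, and your $\subseteq$ argument by contradiction (totality to get $C(b,x,a)$ and $C(c,x,b)$, rotate, apply transitivity at pivot $x$, then contradict asymmetry) is exactly the paper's contrapositive argument in different packaging. Your explicit attention to distinctness before invoking totality is a small point the paper glosses over, but it changes nothing substantive.
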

\begin{proof}
	\begin{itemize}
		\item To prove $C(a,-,c) \supseteq C(a,-,b)\cup \Set{b}\cup C(b,-,c)$:
		\begin{itemize}
			\item By definition, $b\in C(a,-,c)$.
			\item If $C(a,x,b)$, then together with $C(a,b,c)$, and transitivity, we get $C(a,x,c)$.
			\item If $C(b,x,c)$, then by cyclicity, $C(c,b,x)$. By cyclicity again, $C(c,a,b)$. Now by transitivity, $C(c,a,x)$, which is equivalent by cyclicity to $C(a,x,c)$.		 
		\end{itemize}
		\item To prove $C(a,-,c) \subseteq C(a,-,b)\cup \Set{b}\cup C(b,-,c)$, if
		\[x\notin \left(C(a,-,b)\cup \Set{b}\cup C(b,-,c)\right), \]
		then $x\notin \Set{a,b,c}$ and by totality, $C(b,x,a)$ and $C(c,x,b)$. By cyclicity, we get that
		$C(x,a,b)$ and $C(x,b,c)$, which in turn, by transitivity, implies $C(x,a,c)$ which by cyclicity is equivalent to $C(c,x,a)$ which by asymmetry, implies that $x\notin C(a,-,c)$.
	\end{itemize}
\end{proof}
\begin{definition}
	Let $C$ be a cyclic order on a set $A$ and let $X\subseteq A$. Two elements $a,b\in A$ are \emph{$X$-close} if 
	either $X\cap C(a,-,b)$ or  $X\cap C(b,-,a)$ is finite. 
	
	Denote $a\sim_X b$ if $a,b\in A$ are \emph{$X$-close}.
\end{definition}
\begin{lemma}\label{XcloseEq}
	Let $C$ be a cyclic order on a set $A$ and let $X\subseteq A$. Then $\sim_X$ is an equivalence relation on $A$.
\end{lemma}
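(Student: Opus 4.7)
The plan is to verify reflexivity, symmetry, and transitivity in turn; the first two are essentially immediate, and the real content is in transitivity. For reflexivity, cyclicity applied to a putative $C(a,a,a)$ yields $C(a,a,a)$ again, and combined with asymmetry this forces $\neg C(a,a,a)$; a similar argument rules out $C(a,x,a)$ for any $x$, so $C(a,-,a)=\emptyset$ is finite and hence $a\sim_X a$. Symmetry is built directly into the definition, as the defining disjunction is visibly invariant under swapping the two arguments.

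For transitivity, assume $a\sim_X b$ and $b\sim_X c$. If any two of $a,b,c$ coincide the conclusion $a\sim_X c$ is immediate, so I assume the three points are distinct. By totality I may assume $C(a,b,c)$; the other case $C(c,b,a)$ reduces to this one by swapping $a\leftrightarrow c$ and invoking the symmetry of $\sim_X$ already established. Cyclicity then also gives $C(b,c,a)$ and $C(c,a,b)$, so Lemma~\ref{cyclicSeperation} can be applied to all three of these triples, yielding the three arc decompositions $C(a,-,c)=C(a,-,b)\cup\{b\}\cup C(b,-,c)$, $C(b,-,a)=C(b,-,c)\cup\{c\}\cup C(c,-,a)$, and $C(c,-,b)=C(c,-,a)\cup\{a\}\cup C(a,-,b)$.

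The hypotheses now split into four subcases, depending on which disjunct witnesses each of $a\sim_X b$ and $b\sim_X c$. In every subcase one of the three decompositions above exhibits either $C(a,-,c)$ or $C(c,-,a)$ as a subset of a finite union of sets whose intersection with $X$ is already known to be finite. Concretely: if $X\cap C(a,-,b)$ and $X\cap C(b,-,c)$ are both finite, the first decomposition forces $X\cap C(a,-,c)$ finite; if $X\cap C(b,-,a)$ is finite, the second decomposition contains $C(c,-,a)$ as a subset and so $X\cap C(c,-,a)$ is finite; the two remaining mixed subcases are handled by the third decomposition in exactly the same style. In each case $a\sim_X c$ follows.

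I do not anticipate any genuine obstacle: the proof is a compact finite case analysis powered entirely by the three arc decompositions supplied by Lemma~\ref{cyclicSeperation}. The only point worth a moment of care is that all three cyclic rotations $C(a,b,c)$, $C(b,c,a)$, $C(c,a,b)$ needed to feed the Lemma are available, but this is immediate from the cyclicity axiom.
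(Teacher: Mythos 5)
Your proof is correct and takes essentially the same route as the paper's: after reducing to $C(a,b,c)$ via totality, both arguments apply \Cref{cyclicSeperation} to the three cyclic rotations to get the same three arc decompositions and then do finiteness bookkeeping. The only difference is presentational — the paper packages the bookkeeping as a proof by contradiction (assuming both $X\cap C(a,-,c)$ and $X\cap C(c,-,a)$ infinite), while you run a direct four-case analysis on which disjunct witnesses each hypothesis.
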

\begin{proof}
	\begin{itemize}
		\item $X\cap C(a,-,a) =\emptyset$ for all $a\in A$, so reflexivity holds.
		\item Symmetry is obvious by definition.
		\item To prove transitivity, let $a,b,c\in A$ such that $a\sim_X b$ and $b\sim_X c$. Assume towards a contradiction that $X\cap$ $C(a,-,c)$ and $X\cap C(c,-,a)$ are both infinite. We may further assume, without loss of generality, that $X\cap C(a,b,c)$. So by cyclicity, also $X\cap C(c,a,b)$ and $X\cap C(b,c,a)$. By \Cref{cyclicSeperation},
		\begin{align}
		X\cap C(a,-,c) = \left(X\cap C(a,-,b)\right)\cup \left(X\cap \Set{b}\right)\cup \left(X\cap C(b,-,c)\right) \label{XcloseEqqq1}  \\
		X\cap C(c,-,b) = \left(X\cap C(c,-,a)\right)\cup \left(X\cap \Set{a}\right)\cup \left(X\cap C(a,-,b)\right) \label{XcloseEqqq2} \\
		X\cap C(b,-,a) = \left(X\cap C(b,-,c)\right)\cup \left(X\cap \Set{c}\right)\cup \left(X\cap C(c,-,a)\right)\label{XcloseEqqq3} 
		\end{align}
		By Equation (\ref{XcloseEqqq2}), $X\cap C(c,-,b)$ is infinite and by Equation (\ref{XcloseEqqq3}), $X\cap C(b,-,a)$ is infinite. But by Equation (\ref{XcloseEqqq1}), either $X\cap C(a,-,b)$ or $X\cap C(b,-,c)$ is infinite, so either $a\not\sim_X b$ or $b\not\sim_X c$.
	\end{itemize}
\end{proof}
\begin{lemma}\label{cyclicInfPreservesCyclic}
	Let $C$ be a cyclic order on a set $A$. Let $a,a',b,b',c,c'\in A$ and let $X\subseteq A$ such that \begin{align*}
	a\sim_X a',\ b\sim_X b',\ c\sim_X c', \\ a\not\sim_X b,\ a\not\sim_X c,\ b\not\sim_X c.
	\end{align*} Then $C(a,b,c)\iff C(a',b',c')$.
\end{lemma}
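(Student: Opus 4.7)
The plan is to reduce, by the symmetry of the hypotheses, to showing that $C(a,b,c)$ implies $C(a',b',c')$, and then to obtain this by iterating the following intermediate claim one coordinate at a time.

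\medskip\noindent\textbf{Intermediate claim.} If $C(x,y,z)$, $x\sim_X x'$, $x\not\sim_X y$, and $x\not\sim_X z$, then $C(x',y,z)$.

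\medskip
Granting the claim, three applications will give the proposition: $C(a,b,c) \Rightarrow C(a',b,c) \Rightarrow C(a',b',c) \Rightarrow C(a',b',c')$, using the cyclicity axiom to rotate the triple so the element being changed sits in the first slot. At each step the required non-$X$-closenesses will follow from the originals by transitivity of $\sim_X$ (\Cref{XcloseEq}); for instance, to go from $C(a',b,c)$ to $C(a',b',c)$ one needs $b\not\sim_X a'$, which is immediate from $b\not\sim_X a$ and $a\sim_X a'$.

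To prove the intermediate claim, I would first observe that transitivity of $\sim_X$ gives $x'\not\sim_X y$ and $x'\not\sim_X z$, so $x'\notin\{y,z\}$; the claim is trivial if $x'=x$, so $x'$ will lie in exactly one of the arcs $C(x,-,y)$, $C(y,-,z)$, $C(z,-,x)$. The key step is to rule out $x'\in C(y,-,z)$ by contradiction: a short sequence of applications of cyclicity and the transitivity axiom to $C(x,y,z)$ and $C(y,x',z)$ should yield $C(x,y,x')$ and $C(x',z,x)$, so by \Cref{cyclicSeperation},
\[
C(x,-,x')\supseteq C(x,-,y) \quad\text{and}\quad C(x',-,x)\supseteq C(z,-,x).
\]
Since $x\not\sim_X y$ and $x\not\sim_X z$, both right-hand arcs will meet $X$ in infinitely many points, contradicting $x\sim_X x'$. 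Hence $x'\in\{x\}\cup C(x,-,y)\cup C(z,-,x) = C(z,-,y)$ (applying \Cref{cyclicSeperation} to $C(z,x,y)$), and this membership is equivalent to $C(x',y,z)$ by cyclicity.

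The hard part will be clerical rather than conceptual: each cyclic-order manipulation above is a short but non-automatic sequence of rotations chosen so that the premises match the form $C(p,q,r)\wedge C(p,r,s)\Rightarrow C(p,q,s)$ of the transitivity axiom, and I will have to check carefully that the $\sim_X$-hypotheses propagate correctly through the three iterations. Beyond that bookkeeping, the argument is routine.
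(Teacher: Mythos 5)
Your proposal is correct and follows essentially the same route as the paper: reduce to replacing a single coordinate (the paper states "it suffices to show $C(a,b,c)\Rightarrow C(a,b,c')$", which implicitly encodes your three-step iteration with $\sim_X$-transitivity propagating the hypotheses), and then rule out the bad position of the replaced point by deriving, via cyclicity and transitivity, two cyclic relations that let \Cref{cyclicSeperation} force both arcs between $x$ and $x'$ to meet $X$ infinitely, contradicting $x\sim_X x'$. The only difference is presentational: the paper argues by direct contradiction with the negated conclusion, while you phrase it as locating $x'$ in one of the three arcs determined by $x,y,z$; the underlying computation is the same.
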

\begin{proof}
	By symmetry of $\sim_X$ and cyclicity of $C$, it suffices to show that $C(a,b,c)\implies C(a,b,c')$. So assume towards a contradiction 
	\begin{align}
	C(a,b,c) \label{cyclicInfPreservesCyclic11} \\
	C(c',b,a)\label{cyclicInfPreservesCyclic12}
	\end{align}
	By cyclicity on (\ref{cyclicInfPreservesCyclic12}), we get 
	\begin{align}
	C(a,c',b)\label{cyclicInfPreservesCyclic2c}
	\end{align}
	By transitivity applied to (\ref{cyclicInfPreservesCyclic11}) and  (\ref{cyclicInfPreservesCyclic2c}) we get $C(a,c',c)$, which in turn by cyclicity is equivalent to (\ref{cyclicInfPreservesCyclic4b}) below. By cyclicity and transitivity applied to (\ref{cyclicInfPreservesCyclic11}) and (\ref{cyclicInfPreservesCyclic12}), we get  (\ref{cyclicInfPreservesCyclic4a}) below.
	\begin{align}
	C(c,a,c')\label{cyclicInfPreservesCyclic4b} \\
	C(c',b,c) \label{cyclicInfPreservesCyclic4a} 
	\end{align}
	By the assumption of the \namecref{cyclicInfPreservesCyclic}, either $C(c',-,c)$ or $C(c,-,c')$ is finite. By \Cref{cyclicSeperation} and by (\ref{cyclicInfPreservesCyclic4b}) and  (\ref{cyclicInfPreservesCyclic4a}), this implies that at least one of the following is finite: $C(c',-,b),C(b,-,c),C(c,-,a),C(a,-,c')$, so $a\sim_X c$ or $b\sim_X c$. Contradiction.
\end{proof}

\section{Definitions of $T_0$ and $T_1$}\label{sectionDefT0T1}

\begin{definition}\label{defT0}
	Let $\cL_0:=\Braket{<, Z; S,P,\pi; c_1,c_2,c_3,c_4}$ where $<$ is a binary relation symbol, $Z$ is a unary predicate, $S,P, \pi$ are function symbols  and $c_1,c_2,c_3,c_4$ are constant symbols.
	Let $T_0$ be the $\cL_0$-theory consisting of the following axioms:
	\begin{enumerate}
		\item\label{axiomTomin} $T^{omin}_{\cL_{{0}}}$.
		\item\label{axiomDLO}
		$<$ is a dense linear order without end points.
		\item\label{axiomDiscrete}
		$Z$ is discretely ordered, i.e., every non-maximal (respectively, non-minimal) element in $Z$ has an immediate successor (respectively, predecessor) in $Z$.
		\item\label{axiomClosed} $Z$ is closed, i.e., for all $x\notin Z$, there is an interval disjoint from $Z$ containing $x$.
		\item\label{axiomMinMax} 
		$\min(Z) = c_1, \max(Z)=c_4$.
		\item\label{axiomInfBetween}  $c_2,c_3 \in Z$ are such that $c_1<c_2<c_3<c_4$ and there are infinitely many elements in $Z$ between any two of them.
		\item\label{axiomPi} $\pi$ is the cyclic forward projection on $Z$:
		\[\left(\forall x\right)\ \ \  (\pi(x)\in Z)\land (\forall y\in Z \left(\neg C_<(x,y,\pi(x))\right))\]
		
		\item\label{axiomS} $S$ is defined as the cyclic successor function on $Z$, and as the identity outside of $Z$:
		\[ S(x)=y \iiff \left(x\notin Z \land x=y\right)\lor \left(x\in Z \land y\in Z \land \neg \exists z\in Z\,\left(C_<\left(x,z,y\right)\right)\right) \]
		$P$ is defined as $S^{-1}$.
		\newcounter{asdasd}
		\setcounter{asdasd}{\value{enumi}}
	\end{enumerate}
\end{definition}

The consistency of $T_0$ will be proven together with the consistency of $T_1$ defined in \Cref{defT1} below.

\begin{definition}\label{defT1} 
	Let $\cL_1:=\cL_0\cup \{ f, g \}$ where $f,g$ are unary function symbols.	
	
	Let $T_1$ be $T_0$ together with the following axioms:
	\begin{enumerate}
		\setcounter{enumi}{\value{asdasd}}
		\item \label{axiomg=fInv}\label{g=fInv} $f$ is bijective and $g=f^{-1}$.
		\item \label{fc1c2} $f(Z\cap [c_1,c_2])= Z\cap  [c_3,c_4]$ and $f\upharpoonright\left(Z\cap [c_1,c_2]\right)$ is a partial order isomorphism.
		\item \label{fc2c4} $f(Z\cap (c_2,c_4]) = Z\cap [c_1,c_3)$ and $f\upharpoonright\left(Z\cap (c_2,c_4]\right)$ is a partial order isomorphisms.
		\item\label{finf} For all $n> 1$ and for every $z\in Z$
		\[Z\cap C_<\left(z,-,f^n(z)\right)\text{ and }Z\cap C_<\left(f^n(z),-,z\right)\] are infinite, i.e. $z\not\sim_Z f^n(z)$. 
		
		\noindent \emph{Notice that this is a first-order scheme.}
		
		\item\label{fId} $f(x)=x$ for every $x\notin Z$
		\item\label{circularAxiom} $C_<\left(f^m(z),f^n(z),z\right) $ for all $m>n>0$ and for every $z\in Z$.  
	\end{enumerate}
\end{definition}

\begin{proposition}\label{T0Consistent}
	$T_1$ is consistent.
\end{proposition}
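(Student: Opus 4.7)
The plan is to realize a model of $T_1$ as an ultraproduct of o-minimal $\cL_1$-structures, mirroring \Cref{exampleNonUltra}. For each positive integer $M$ I build a structure $\cM_M$ with underlying dense linear order $(\bR, <)$ and a distinguished finite set $Z_M \subset \bR$ of size $N_M := M^2$; the goals are to show that each $\cM_M$ is o-minimal, and that every fixed first-order instance of each axiom of $T_1$ holds in $\cM_M$ for all sufficiently large $M$. \L{}os' theorem then produces the desired model $\prod_M \cM_M / \cU$ for any non-principal ultrafilter $\cU$ on $\bN$.

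To define $\cM_M$, list $Z_M = \{z_0 < z_1 < \dots < z_{M^2 - 1}\}$ and set $c_1 := z_0$, $c_2 := z_{M-1}$, $c_3 := z_{M^2 - M}$, $c_4 := z_{M^2 - 1}$. This gives $|Z_M \cap [c_1, c_2]| = |Z_M \cap [c_3, c_4]| = M$ and $|Z_M \cap (c_2, c_3)| = M^2 - 2M$, all growing with $M$ --- so Axiom \ref{axiomInfBetween} will hold in the ultraproduct. Interpret $S_M, P_M, \pi_M$ as demanded by Axioms \ref{axiomPi} and \ref{axiomS}. The crucial choice is $f$: on $Z_M$, set $f_M(z_i) := z_{(i - M) \bmod M^2}$, extend by the identity outside $Z_M$, and put $g_M := f_M^{-1}$. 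A direct check shows that $f_M$ sends $\{z_0, \dots, z_{M-1}\}$ order-isomorphically onto $\{z_{M^2 - M}, \dots, z_{M^2 - 1}\}$ and $\{z_M, \dots, z_{M^2 - 1}\}$ order-isomorphically onto $\{z_0, \dots, z_{M^2 - M - 1}\}$, matching Axioms \ref{fc1c2} and \ref{fc2c4}.

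The heart of the argument is the verification of the two schematic axioms \ref{finf} and \ref{circularAxiom}. From $f_M^k(z_i) = z_{(i - kM) \bmod M^2}$ one gets, for $1 \le k \le M - 1$, that $f_M^k(z_i)$ lies at cyclic forward distance $M(M-k)$ and cyclic backward distance $kM$ from $z_i$. Both quantities tend to $\infty$ as $M \to \infty$ for any fixed $k$, so every instance of \ref{finf} --- read as the scheme asserting, for each $k > 1$ and each $\ell \in \bN$, that both cyclic arcs from $z$ to $f^k(z)$ contain at least $\ell$ points of $Z$ --- holds in $\cM_M$ once $M$ is large. Further, $k \mapsto M(M-k)$ is strictly decreasing on $\{1, \dots, M-1\}$, so for any fixed $m > n > 0$ and any $M > m$ the required cyclic order $C_<(f_M^m(z), f_M^n(z), z)$ of Axiom \ref{circularAxiom} holds in $\cM_M$.

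Finally, o-minimality of each $\cM_M$ delivers Axiom \ref{axiomTomin} in the ultraproduct: since $S^i(c_1) = z_i$, every element of $Z_M$ is named by a closed $\cL_1$-term, so $\cM_M$ is definitionally equivalent to the expansion of the pure DLO $(\bR, <)$ by the $N_M$ constants $z_0, \dots, z_{N_M - 1}$, which is o-minimal by quantifier elimination. All remaining axioms of $T_0 \cup T_1$ are universal first-order statements holding in every $\cM_M$ by construction. The delicate point, in my view, is the reading of \ref{finf}: literally interpreted, it fails in each finite $\cM_M$ since $\sim_{Z_M}$ is trivial there, but each individual sentence in the scheme asserts only that ``both arcs contain at least $\ell$ elements,'' and the distance computation above is precisely calibrated so that each such sentence eventually holds as $M \to \infty$.
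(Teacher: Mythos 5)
Your proposal is correct and follows essentially the same route as the paper: you build the very same finite-$Z$ approximations (a DLO with $|Z|=M^2$, the constants cutting off blocks of size $M$, and $f$ a cyclic shift by $M$, which is exactly the paper's $\{0,\dots,N\}\times\{0,\dots,N\}$ model with $M=N+1$), and verify each instance of the schemes \labelcref{axiomInfBetween,finf,circularAxiom} for all sufficiently large $M$. The only difference is packaging — you realize compactness concretely via a non-principal ultraproduct and \L{}os' theorem, whereas the paper argues finite satisfiability directly — and your explicit justification that each finite approximation is o-minimal (hence models $T^{omin}_{\cL_0}$) is if anything slightly more careful than the paper's ``by definition.''
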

\begin{proof} We prove finite satisfiability of $T_1$ take some sufficiently large natural number $N$. Take $Z=\Set{0,\dots,N}\times \Set{0,\dots,N}$ with the lexicographic order and consider a structure $\cM$ which is a DLO containing $Z$ as an ordered subset.
	
	Let $c_1:= (0,0), c_2 := (0,N), c_3 := (N,0), c_4 :=(N,N)$.
	
	Let \[f((a,b)): = \twopartdef{(a-1 \mod (N+1),b)}{x=(a,b)\in Z }{x}{x\notin Z }\]
	and let $g := f^{-1}$
	
	Let $\pi$ the circular projection, as defined in Axiom \labelcref{axiomPi}.
	
	Let $S$ be the circular successor function, as defined in Axiom \labelcref{axiomS} and let $P:=S^{-1}$
	
	Then $\cM$ satisfies Axioms \labelcref{axiomTomin,axiomDLO,axiomDiscrete,axiomMinMax,axiomClosed,axiomS,axiomPi,axiomg=fInv,fc1c2,fc2c4,fId} by definition.
	As for Axioms \labelcref{axiomInfBetween,finf,circularAxiom}:
	
	Any finite segment of Axiom \labelcref{axiomInfBetween} is contained in the following axiomatization, for a fixed $k\in \bN$:
	
	\begin{enumerate}
		\item[\labelcref*{axiomInfBetween}$_k$.]  $c_2,c_3 \in Z$ are such that $c_1<c_2<c_3<c_4$ and there are at least $k$ elements in $Z$ between any two of them.
		\item[\labelcref*{finf}$_k$.]\label{finfK}  For all $k>n>0$ and for every $z\in Z$.
		\begin{enumerate}
			\item There are at least $k$ elements in $Z\cap C_<\left(z,-,f^n(z)\right)$.
			\item There are at least $k$ elements in $Z\cap C_<\left(f^n(z),-,z\right)$.
		\end{enumerate}
		\item[\labelcref*{circularAxiom}$_k$.]\label{circularAxiomK} $C_<\left(f^m(z),f^n(z),z\right)$ for all $k>m>n>0$.
	\end{enumerate}
	If $N>k$ then $\cM$ satisfies Axioms  \labelcref*{axiomInfBetween}$_k$ and \labelcref*{finf}$_k$, by definition.

	Under the assumption $N>k$, we prove that $\cM$ satisfies Axiom \labelcref*{circularAxiom}$_k$, thus $T_1$ is finitely satisfiable.
	
	For all $(x,y)\in Z$:
	
	\begin{align*}
	f^m((x,y)) = (x-m \mod (N+1),y) \\
	f^n((x,y)) = (x-n \mod (N+1),y)
	\end{align*}
	
	So proving Axiom \labelcref{circularAxiom}$_k$ reduces to proving that for any $x\in \Set{0,\dots,N}$ and $0<n<m<N$ one of the following holds:
	\begin{enumerate}
		\item[(a)]$ x\ominus m < x\ominus n < x$
		\item[(b)]$ x\ominus n < x     < x\ominus m $
		\item[(c)]$ x    < x\ominus m < x\ominus n $
	\end{enumerate} 
	where $\ominus$ is subtraction modulo $N+1$.

	If $m\leq x$\ \ \ \ \ \ \,\ \ then (a) holds.
	
	If $n\leq x<m$ \ then (b) holds.
	
	If $x<n$\ \ \ \ \ \ \,\ \ then (c) holds.
\end{proof}

\section{Quantifier Elimination in $T_0$}\label{secT0QE}

We now show that $T_0$ eliminates quantifiers:

\begin{remark}\label{conseqDLOZ}\ Let $\cM\models T_0$ and $\tau, a\in \cM$. Then the following hold:
	
	\begin{enumerate}
		\item $ S(\tau) \in Z \iff \tau \in Z $
		\item $ P(\tau)\in Z\iff \tau \in Z $
		\item $ S(\tau) = a \iff \tau = P(a) $
		\item $ S(\tau) < a \iff \tau < P(a) $
		\item $ S(\tau) > a \iff \tau > P(a) $
		\item $ \pi(\tau) \in Z \iff c_1\in Z $
		\item $ \pi(\tau) = a \iff \left[\tau\leq c_4 \land a\in Z\land P(a)<\tau\leq a\right]\lor \left[ \tau>c_4 \land c_1=a \right] $
		\item $ \pi (\tau)< a \iff \left[\tau\leq c_4 \land \tau \leq P\circ \pi (a)\right]\lor \left[ \tau>c_4 \land c_1<a \right] $
		\item $ \pi (\tau)> a \iff \left[\tau\leq c_4 \land \tau \geq P\circ \pi \circ S(a)\right]\lor \left[ \tau>c_4 \land c_1>a \right] $

	\end{enumerate}
\end{remark}

\begin{remark}\label{DLOZinZ}
	\ 
	
	If $x\in Z$ then:
	\begin{enumerate}
		\item $S^{m_1}\circ \pi^{n_1} \circ \dots \circ S^{m_{k}}\circ \pi^{n_k}\circ S^{l}(x) = S^{m_1+\dots+m_k+l}(x) $.
		\item $P^{m}(x) = S^{-m}(x)$ and $P^{-m}(x)=S^m(x)$ for all $m\in \bN$.
		\item 
		$ S^{m}(x)\square x \iff S^{m}(c_2)\square c_2$ for all $m\in \bN, \square \in \Set{<,>,=}$, $x\notin \Set{c_4, P(c_4),\dots, P^m(c_4),} $.
		$ P^{m}(x)\square x \iff S^{m}(c_2)\square c_2$ for all $m\in \bN, \square \in \Set{<,>,=}$, $x\notin \Set{c_1, S(c_1),\dots, S^m(c_1),} $.
	\end{enumerate}
	
	If $x\notin Z$:
	\begin{enumerate}
		\item $ S^{m_1}\circ \pi^{n_1} \circ \dots \circ S^{m_{k}}\circ \pi^{n_k}\circ S^{l}(x) = S^{m_1+\dots+m_k}\circ \pi(x) $.
		\item $S^m(x) \square x \iff c_1\square c_1$ for all $m\in \bZ, \square\in \Set{<,>,=}$. 
		\item $S^m\circ \pi (x) = x \iff c_1\neq c_1$ for $m\neq 0$.
		\item $S^m\circ \pi(x)>x\iff S^{m+1}\circ \pi(x)> \pi(x) $.
		\item $S^m\circ \pi(x)<x\iff S^m\circ \pi(x)<\pi(x) $.
	\end{enumerate}
\end{remark}
\begin{lemma}\label{LU}
	For any $\cM\models T_0$ and $a,b\in \cM$, 
	\[ 
	\cM\models \left[\exists x\in Z (a<x<b)\right]\iiff \left[\pi\circ S(a)<b\right] \]
\end{lemma}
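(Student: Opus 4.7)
The plan is to prove the equivalence by case analysis on the position of $a$ relative to $Z$ and to the constants $c_1, c_4$. The guiding intuition is that $\pi\circ S(a)$ returns the least element of $Z$ strictly greater than $a$ in the linear order (with the cyclic convention that it wraps to $c_1$ once $a$ passes $c_4$); thus the claim just asserts that a $Z$-point lies in $(a,b)$ iff the least $Z$-point above $a$ lies below $b$.

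I would first treat the main case $a \in Z$ with $a < c_4$. By Axiom~\labelcref{axiomS} together with the discreteness of $Z$ (Axiom~\labelcref{axiomDiscrete}), $S(a)$ is the immediate linear successor of $a$ in $Z$; in particular $S(a) \in Z$, so Axiom~\labelcref{axiomPi} is satisfied trivially by $\pi(S(a)) = S(a)$. The right-hand side therefore reduces to $S(a) < b$, which by discreteness is exactly the condition that some element of $Z$ lies in $(a,b)$, namely its least such member $S(a)$. Next I would dispose of $a \notin Z$ with $a \leq c_4$. Here $S(a) = a$ by Axiom~\labelcref{axiomS}, and the key sublemma is that $\pi(a)$ must be the least element of $Z$ strictly greater than $a$ in the linear order (equal to $c_1$ when $a < c_1$): any other candidate would admit some $y \in Z$ witnessing the forbidden triple $C_<(a, y, \pi(a))$, where closedness of $Z$ (Axiom~\labelcref{axiomClosed}) rules out the pathological possibility that no next $Z$-point above $a$ exists. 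The equivalence then follows just as in the previous case.

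The boundary scenarios with $a \geq c_4$ are dispatched by directly unpacking the explicit formulas recorded in Remark~\ref{conseqDLOZ}, which translate $\pi$- and $S$-statements into bounded linear-order conditions on the parameters. I expect the main obstacle to be pinning down $\pi(a)$ uniquely in the case $a \notin Z$ of the second paragraph, since the axiom for $\pi$ only rules out intermediate $Z$-elements cyclically and one must argue separately that the resulting value matches the linear-order description; the cyclic wrap-around near $c_4$ is a secondary but notationally delicate point, which Remark~\ref{conseqDLOZ} handles cleanly.
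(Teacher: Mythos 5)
Your two main cases are handled exactly as in the paper: for $a\in Z$ (away from $c_4$) the right-hand side reduces to $S(a)<b$ with $S(a)$ the immediate successor of $a$ in $Z$, and for $a\notin Z$ (below $c_4$) it reduces to $\pi(a)<b$ with $\pi(a)$ the least element of $Z$ above $a$; the paper's proof consists of precisely these two observations. The genuine gap is your final claim that the boundary scenarios $a\geq c_4$ are ``dispatched by directly unpacking'' \Cref{conseqDLOZ}. Unpacking that remark (or Axioms \labelcref{axiomPi,axiomS} directly) gives the opposite conclusion: for $a\geq c_4$ one has $S(c_4)=c_1$ by the cyclic wrap-around and $\pi(x)=c_1$ for $x>c_4$, hence $\pi\circ S(a)=c_1$, so the right-hand side of \Cref{LU} reads $c_1<b$, which holds for every $b>c_1$, whereas the left-hand side is false because no element of $Z$ exceeds $c_4=\max(Z)$. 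In that region the biconditional is not merely awkward to verify via the remark --- it fails, so no amount of bookkeeping can close the step as you describe it.

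In fairness, the paper's own two-line proof has the same blind spot: its first case asserts $\exists x\in Z(a<x<b)\iiff S(a)<b$ for all $a\in Z$, which is false at $a=c_4$, and its second case fails for $a>c_4$; the lemma is correct only away from the wrap-around, e.g.\ if one restricts to $a<c_4$ or replaces the right-hand side by $a<\pi\circ S(a)\land \pi\circ S(a)<b$. (A smaller point you share with the paper: Axiom \labelcref{axiomPi} as written does not force $\pi\upharpoonright Z$ to be the identity --- $\pi(z)=S(z)$ also satisfies it --- so the equality $\pi(S(a))=S(a)$ relies on the intended reading of ``cyclic forward projection''.) The difference is that the paper simply never asserts anything about $a\geq c_4$, while your proposal explicitly claims that case goes through via \Cref{conseqDLOZ}; that claim is the step that would fail, and you would need either to add a hypothesis excluding the wrap-around or to amend the statement before the boundary case can be settled.
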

\begin{proof}
	If $a\in Z$ then 
	$ \cM\models \left[\exists x\in Z (a<x<b)\right]\iiff \left[S(a)<b\right] $ and $S(a) = \pi\circ S(a)$.
	
	If $a\notin Z$ then 
	$ \cM\models \left[\exists x\in Z (a<x<b)\right]\iiff \left[\pi(a)<b\right] $ and $\pi(a) = \pi\circ S(a)$.
\end{proof}
\begin{proposition}\label{T0QE}
	$T_0$ admits quantifier elimination.
	
	\begin{proof}
		Let $\phi = \exists x \bigwedge_{i\in I} \theta_i\left(\bar{y},x\right) $ such that $\{\theta_i\}_{i\in I}$ are atomic and negated atomic formulas.
		We need to find a quantifier-free $\cL_0$-formula $\varphi$ such \[T_0\models \forall \bar{y}\left[   \left(\exists x \bigwedge_{i\in I} \theta_i\left(\bar{y},x\right) \right)\leftrightarrow \varphi(\bar{y})\right]\]
		
		Firstly, since $\vdash \exists x \big( \chi\left(\bar{y},x\right) \land \theta\left(\bar{y}\right) \big) \leftrightarrow \exists x \big( \chi\left(\bar{y},x\right)\big)  \land \theta\left(\bar{y}\right)$ we may assume that $x$ occurs in $\theta_i$ for all $i\in I$. Secondly, 
		\[\vdash \left[\exists x \bigwedge_{i\in I} \theta_i\left(\bar{y},x\right)\right] \leftrightarrow  \left[\exists x \left( \bigwedge_{i\in I} \theta_i\left(\bar{y},x\right) \land \left( x\in Z \lor x\notin Z  \right)\right)\right]\leftrightarrow\]
		\[\left[ \left(\exists x \left( \bigwedge_{i\in I} \theta_i\left(\bar{y},x\right) \land  x\in Z   \right)\right)\lor \left(\exists x \left( \bigwedge_{i\in I} \theta_i\left(\bar{y},x\right) \land  x\notin Z   \right)\right)\right]  .\]
		So we may assume $\phi$ is either of the form $ \exists x \left( \bigwedge_{i\in I} \theta_i\left(\bar{y},x\right) \land  x\in Z   \right)$ or of the form $\exists x \left( \bigwedge_{i\in I} \theta_i\left(\bar{y},x\right) \land  x\notin Z   \right)$ where $\theta_i$ are atomic and negated atomic formulas such that $x$ occurs in each $\theta_i$. We may assume that  $\theta_i$ is neither `$x\in Z$' nor `$x\notin Z$' for any $i\in I$, as such occurrence would be either superfluous or inconsistent. So each $\theta_i$ is of the form $t_1\square t_2$
		where $t_1,t_2$ are terms with variables in $x,\bar{y}$.
		
		By \Cref{conseqDLOZ}, we may assume either
		\[\phi(\bar{y}) = \exists x \left(\bigwedge_{i=1}^k t_i\square_{i} x \land x\notin Z \right)\]
		or
		\[\phi(\bar{y}) = \exists x \left(\bigwedge_{i=1}^k t_i\square_{i} x \land x\notin Z \right)\]
		where $t_i$ are with variables from $\Set{x,\bar{y}}$, $\square\in \Set{<,>,=,\leq,\geq,\neq}$.
		By \Cref{DLOZinZ}, we may assume that $x$ does not occur in any $t_i$.
		Next, notice that $\geq, \leq, \neq$ are positive Boolean combinations of $<,>,=$ and if $\square_i$ is $``="$ for some $i$ we can just replace $x$ with $t_i$. So we may assume $\square_i\in\Set{<,>}$, i.e. either
		\begin{align}\label{caseInZ}
		\phi(\bar{y}) = \exists x \left(\bigwedge_{i=1}^m l_i <  x \land \bigwedge_{j=1}^n u_i >  x \land x\in Z \right)
		\end{align}
		or
		\begin{align}\label{caseNotInZ}
		\phi(\bar{y}) = \exists x \left(\bigwedge_{i=1}^m l_i <  x \land \bigwedge_{j=1}^n u_i >  x \land x\notin Z \right)
		\end{align}
		where $l_i, u_i$ are terms not containing $x$.

		If $\phi$ is as in (\ref{caseInZ}), then by \Cref{LU}, $\phi(\bar{y})$ is equivalent to
		\[ \bigwedge_{i=1}^n\bigwedge_{j=1}^m \left(\pi\circ S(l_i)<u_j\right). \]
		
		If $\phi$ is as in (\ref{caseNotInZ}), then since $Z$ is co-dense, $\phi(\bar{y})$ is equivalent to
		\[ \bigwedge_{i=1}^n\bigwedge_{j=1}^m \left(l_i<u_j\right). \]
	\end{proof}
\end{proposition}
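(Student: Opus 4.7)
My plan is to use the standard Shoenfield-style criterion for quantifier elimination: it suffices to show that every primitive formula $\phi(\bar{y}) = \exists x \bigwedge_{i \in I} \theta_i(\bar{y}, x)$, with each $\theta_i$ atomic or negated atomic, is $T_0$-equivalent to a quantifier-free $\cL_0$-formula. Since the predicates $x \in Z$ and $x \notin Z$ together cover all of $\cM$, and since disjunction commutes with the existential quantifier, I would first split $\phi$ into the two cases where the witness $x$ lies in $Z$ and where it does not.

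The main work is to reduce the atomic formulas $\theta_i(\bar{y}, x)$ in each case into a manageable normal form. The key observations I would use are encapsulated in \Cref{conseqDLOZ,DLOZinZ}: inside the predicate $x \in Z$, the operations $S, P, \pi$ interact in rigid algebraic ways (e.g., $\pi$ is the identity on $Z$, $S$ and $P$ are mutual inverses, and nested compositions collapse), while outside $Z$, $\pi$ projects onto $Z$ and $S, P$ act trivially. Using these, every atomic formula involving $x$ can be rewritten so that $x$ itself occurs on at most one side of the comparison, and all other term occurrences $S^k(x), \pi(x)$, etc.\ are eliminated by pushing them to the side of $\bar{y}$ using the case hypothesis on $x$. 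This reduces $\phi$ to either $\exists x\bigl( \bigwedge l_i < x \land \bigwedge u_j > x \land x \in Z\bigr)$ or the corresponding formula with $x \notin Z$, where the $l_i, u_j$ are terms not containing $x$.

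Once in this normal form, the existential is easy to eliminate. In the $x \notin Z$ case, because $Z$ is closed with empty interior (the underlying order is dense and $Z$ is discrete, hence co-dense), the existence of an element strictly between $\max_i l_i$ and $\min_j u_j$ outside $Z$ is equivalent to the simple inequalities $l_i < u_j$ for all $i, j$. In the $x \in Z$ case, one uses \Cref{LU}, which translates ``$\exists x \in Z\,(a < x < b)$'' into the quantifier-free condition ``$\pi \circ S(a) < b$''; applying this to every pair $(l_i, u_j)$ yields the required quantifier-free formula $\bigwedge_{i,j}\pi\circ S(l_i) < u_j$.

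The main obstacle I expect is bookkeeping: verifying that every composite term in $S, P, \pi, x$ really does collapse to one of the canonical forms in both cases on $x$, since the axioms of $T_0$ only pin down the behavior of these operators on and off $Z$ separately. The cleanest way to handle this is to prove by induction on term complexity that, modulo $T_0$ and modulo the hypothesis $x \in Z$ (respectively $x \notin Z$), every term $t(\bar{y}, x)$ is $T_0$-equivalent either to a term in $\bar{y}$ alone or to a term of the form $S^n(x)$, invoking \Cref{DLOZinZ} at each step. Once this normalization is in place, the rest of the proof is essentially the standard quantifier elimination for dense linear orders applied twice.
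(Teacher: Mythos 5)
Your proposal is correct and follows essentially the same route as the paper: split the primitive existential formula according to $x\in Z$ versus $x\notin Z$, use \Cref{conseqDLOZ,DLOZinZ} to push $S,P,\pi$ off the quantified variable so that each conjunct becomes $l_i<x$ or $x<u_j$ with $l_i,u_j$ free of $x$, and then eliminate the quantifier via \Cref{LU} in the $Z$-case and co-density of $Z$ in the other case. The only difference is presentational — you package the term normalization as an explicit induction on term complexity, which the paper leaves implicit in the two remarks.
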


\section{Definition of $T_2$ and the relation to $T_1$}\label{secT2T1}

\begin{definition}\label{defT2}
	Let $\cM_1\models T_1$ be arbitrary, with universe $M$. 
	
	Let $\cM_0$ be the restriction of $\cM$ to $\cL_0$, i.e., $\cM_0 =\cM_1\upharpoonright\cL_0$. Consequently, $\cM_0\models T_0$.
	
	Let $\cM_2$ be the same $\cL_1$-structure as $\cM$ with a slight modification on $f$ and $g$, as follows.
	
	\begin{align*}
	& f^{\cM_2}(x):=\twopartdef{P\circ f^{\cM_1}(x)}{S^n(x)=c_4\text{ for some }n\in \bN}{f^{\cM_1}(x)}{S^n(x)\neq c_4\text{ for all }n\in \bN}. \\
	& g^{\cM_2}(x):=\twopartdef{g^{\cM_1}\circ S(x)}{S^n(x)=P(c_3)\text{ for some }n\in \bN}{g^{\cM_1}}{S^n(x)\neq P(c_3)\text{ for all }n\in \bN}.\end{align*}

	In words, there is some convex set $X$ with maximum $c_4$ such that the order type of $X\cap Z$ is $\omega^{*}$. $f^{\cM_1}$ maps $X\cap Z$ to a convex subset $f^{\cM_1}(X\cap Z)$ of $Z$ of order type $\omega^*$ with maximum $P(c_3)$, by Axiom \labelcref{fc2c4} in \Cref{defT1}.
	
	Then $f^{\cM_2},g^{\cM_2}$ are obtained from $f^{\cM_1},g^{\cM_1}$ by applying a shift by one element in $X\cap Z$, $f(X\cap Z)$ respectively.
\end{definition}

\begin{lemma}\label{fPreservesCyclic}
	$f^{\cM_1}$ preserves the cyclic order on $Z$, i.e. 
	\[T_1\models \left(\forall z_1,z_2,z_3\in Z\right)\left[C_<\left(z_1,z_2,z_3\right)\iiff C_<\left(f(z_1),f(z_2),f(z_3)\right)\right]. \]
\end{lemma}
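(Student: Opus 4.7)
The plan is to reduce the statement to \Cref{factCuts} via an auxiliary linear order on $Z$. Define $<'$ on $Z$ by $x <' y \iff f(x) < f(y)$. This is a linear order since $f$ restricts to a bijection of $Z$ (combining Axioms \labelcref{g=fInv,fc1c2,fc2c4}). Directly from the definition of the cyclic order induced by a linear order,
\[ C_{<'}(x,y,z) \iff C_<(f(x), f(y), f(z)) \]
for all $x,y,z \in Z$. Consequently, proving that $f$ preserves $C_<$ on $Z$ is equivalent to proving that $C_<\upharpoonright Z = C_{<'}$ as cyclic orders on $Z$, and this is what \Cref{factCuts} is tailored to deliver.

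To set up \Cref{factCuts}, I would take the $<$-cut $(A_1, B_1) := (Z \cap [c_1, c_2],\ Z \cap (c_2, c_4])$, which is obviously a cut of $Z$ with respect to $<$. Using Axioms \labelcref{fc1c2,fc2c4}, for any $a \in Z \cap (c_2, c_4]$ and $b \in Z \cap [c_1, c_2]$ we get $f(a) \in Z \cap [c_1, c_3)$ and $f(b) \in Z \cap [c_3, c_4]$, whence $f(a) < f(b)$; equivalently, $a <' b$. Hence $(A_2, B_2) := (Z \cap (c_2, c_4],\ Z \cap [c_1, c_2])$ is a $<'$-cut of $Z$.

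The key observation is that Axiom \labelcref{fc1c2}, in saying that $f$ restricts to a $<$-order isomorphism on $Z \cap [c_1, c_2]$, unwinds to: for $x,y \in Z \cap [c_1, c_2]$, we have $x < y \iff f(x) < f(y) \iff x <' y$. So $<$ and $<'$ coincide on $A_1 = B_2$, and the identity witnesses $(A_1, <) \cong (B_2, <')$. A symmetric argument using Axiom \labelcref{fc2c4} shows that $<$ and $<'$ coincide on $B_1 = A_2$, so the identity witnesses $(B_1, <) \cong (A_2, <')$. Invoking \Cref{factCuts} yields $C_< = C_{<'}$ on $Z$, which by the opening remark is equivalent to the desired conclusion.

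The principal obstacle I anticipate is really only conceptual: one needs to spot that the axiomatic description of $f$ on the two halves $Z \cap [c_1, c_2]$ and $Z \cap (c_2, c_4]$ is saying \emph{precisely} that $<$ and $<'$ agree on each half, which is exactly the hypothesis in \Cref{factCuts}. Once the auxiliary order $<'$ is introduced and the two matching cuts are identified, the rest of the argument is immediate from the axioms.
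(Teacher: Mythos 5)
Your proof is correct and takes essentially the same route as the paper: both arguments introduce an auxiliary linear order on $Z$ obtained by swapping the two blocks of a cut and then conclude $C_<=C_{<'}$ via \Cref{factCuts}, using Axioms \labelcref{fc1c2,fc2c4}. The only (cosmetic) difference is that you define $<'$ as the pullback of $<$ along $f$, so the cut comparison happens on the domain side with identity isomorphisms, whereas the paper defines the rotated order explicitly on the image side and transports along $f$.
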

\begin{proof}
	Define a new ordering $<'$ on $Z$ by
	\begin{align*}
	x<'y \iff & \left(x,y \in [c_3,c_4] \land x<y\right)\lor \\
	& \left(x,y \in [c_1,c_3) \land x<y\right)\lor \\
	& \left(x\in [c_3,c_4], y \in [c_1,c_3) \right). \\
	\end{align*}
	By Axioms \labelcref{fc1c2,fc2c4} in \Cref{defT1}, $\left([c_1,c_2], < \right) \cong ([c_3,c_4],<')$ and $\left((c_2,c_4], < \right) \cong ([c_1,c_3),<')$ and
	\[T_0\models \left(\forall x,y\in Z\right) \left[x<y \iiff f(x)<'f(y)\right]. \]

	Additionally, by definition of $<'$, it follows that $\left([c_3,c_4],[c_1,c_3) \right)$ is a $<'$-cut in $Z$ and $\left([c_1,c_2], (c_2,c_4] \right)$ is a $<$-cut in $Z$. So by \Cref{factCuts}, $C_{<'} = C_<$. In conclusion
	\begin{align*}
	T_0\models \left(\forall z_1,z_2,z_3\in Z\right)\  \Big[ &C_<\left(z_1,z_2,z_3\right)\iiff \\  & C_{<'}\left(f(z_1),f(z_2),f(z_3)\right) \iiff \\ & C_<\left(f(z_1),f(z_2),f(z_3)\right)\Big].
	\end{align*} 
\end{proof}
\begin{lemma}\label{AbelianInT1} Let $f,g, S, P$ be as in \Cref{defT1}. Then
	$\Braket{f,g,S,P}_{cl}$, the closure of $\Set{f,g,S,P}$ under composition is an Abelian group.
\end{lemma}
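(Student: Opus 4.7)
The plan is to reduce the Abelianness of $\langle f,g,S,P\rangle_{cl}$ to the single commutation relation $f\circ S = S\circ f$. Note first that $\langle f,g,S,P\rangle_{cl}$ is automatically a group: by Axiom \labelcref{g=fInv}, $g=f^{-1}$, and by the definition of $P$ in \Cref{defT0}, $P=S^{-1}$, so the generating set is closed under inverses; moreover, each of $f,g,S,P$ is a bijection of $M$ fixing $M\setminus Z$ pointwise, a property stable under composition.

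The central step is to show $f\circ S = S\circ f$ everywhere on $M$. Off $Z$, both functions act as the identity (by Axioms \labelcref{axiomS} and \labelcref{fId}), so the identity holds trivially. On $Z$, I would exploit the defining property of $S$ in Axiom \labelcref{axiomS}: for $z\in Z$, $S(z)$ is the unique element of $Z$ such that no element of $Z$ lies cyclically between $z$ and $S(z)$, i.e., $Z\cap C_<(z,-,S(z))=\emptyset$. By \Cref{fPreservesCyclic}, $f$ is a cyclic-order-preserving bijection of $Z$, hence it maps $C_<(z,-,S(z))\cap Z$ bijectively onto $C_<(f(z),-,f(S(z)))\cap Z$; the latter is therefore empty, so $f(S(z))$ satisfies the uniqueness condition defining $S(f(z))$, forcing $f(S(z))=S(f(z))$.

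Once $fS=Sf$ is established, applying the inversion $h\mapsto h^{-1}$ to this equation gives $gP=Pg$; cross-inverting yields $fP=Pf$ and $gS=Sg$; and trivially $fg=gf=\mathrm{id}$, $SP=PS=\mathrm{id}$. Hence every pair of generators commutes, so every word in $f,g,S,P$ normalizes to $f^a S^b$ for some $a,b\in\bZ$, and the closure is Abelian. The only step with substantive content is the commutation $fS=Sf$; no serious obstacle is anticipated, as it is essentially a direct unwinding of \Cref{fPreservesCyclic} together with the uniqueness built into the definition of the cyclic successor.
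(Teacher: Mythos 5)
Your proposal is correct and follows essentially the same route as the paper: reduce everything to the single commutation $f\circ S=S\circ f$, obtained from the fact that $S$ is defined purely in terms of the cyclic order on $Z$ while $f$ preserves that cyclic order (\Cref{fPreservesCyclic}), and then observe that a group generated by two commuting invertible elements is Abelian. Your write-up merely makes explicit the unwinding (emptiness of $Z\cap C_<(z,-,S(z))$ transported by the cyclic-order-preserving bijection $f\upharpoonright Z$) that the paper leaves implicit.
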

\begin{proof}
	By definition, $g\circ f =  I =  P\circ S$, so $f,S$ are invertible and $\Braket{f,g,S,P}_{cl} = \Braket{f,S}_{grp}$ where $\Braket{f,S}_{grp}$ is the group generated by $\Set{f,S}$. 
	
	Since $S$ is definable by the cyclic order on $Z$ (Axiom \labelcref{axiomS} in \Cref{defT0}) and $f$ preserves the cyclic order on $Z$ (\Cref{fPreservesCyclic}), it follows that $f\circ S(x)=S\circ f(x)$. Now $\Braket{f,S}_{grp}$ is Abelian, as the group defined by $\Braket{a,b | ab=ba}$ is Abelian.
\end{proof}
\begin{corollary}\label{ginfLemma} Let $n\geq 1$ and $x\in Z$.
	\[ \left(g^{\cM_1}\right)^n(x)\not\sim_Z x \]
\end{corollary}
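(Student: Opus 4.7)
The plan is to reduce the statement about $g^{\cM_1}$ to the corresponding statement for $f^{\cM_1}$ (Axiom \labelcref{finf}), using the fact that cyclic-order-preserving bijections of $Z$ preserve the $Z$-closeness relation $\sim_Z$.

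The first step is a short invariance principle: if $\phi\colon Z\to Z$ is a bijection preserving $C_<$ on $Z$, then for any $a,b\in Z$, the map $\phi$ restricts to a bijection $Z\cap C_<(a,-,b)\to Z\cap C_<(\phi(a),-,\phi(b))$, since $C_<(a,x,b)\iff C_<(\phi(a),\phi(x),\phi(b))$. Hence the two sets have the same cardinality, and $a\sim_Z b$ if and only if $\phi(a)\sim_Z \phi(b)$. This applies to $\phi=(f^{\cM_1})^n$ for every $n\geq 1$: by Axioms \labelcref{fc1c2,fc2c4}, $f^{\cM_1}$ restricts to a bijection of $Z$, and by \Cref{fPreservesCyclic} it preserves $C_<$ on $Z$; both properties pass to iterates. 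By \Cref{AbelianInT1}, the inverse of $(f^{\cM_1})^n$ is $(g^{\cM_1})^n$.

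Applying the invariance principle with $a=x$ and $b=(g^{\cM_1})^n(x)$ gives
\[
(g^{\cM_1})^n(x)\sim_Z x \iff (f^{\cM_1})^n\!\left((g^{\cM_1})^n(x)\right)\sim_Z (f^{\cM_1})^n(x) \iff x\sim_Z (f^{\cM_1})^n(x).
\]
By Axiom \labelcref{finf}, the rightmost statement fails for $n\geq 1$, so $(g^{\cM_1})^n(x)\not\sim_Z x$ once we invoke the symmetry of $\sim_Z$ from \Cref{XcloseEq}. I do not anticipate any serious obstacle; the only thing that genuinely needs to be verified is the brief invariance principle above, and it is immediate from the definitions of $C_<(a,-,b)$ and of $\sim_Z$.
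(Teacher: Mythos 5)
Your proof is correct, but it takes a more roundabout route than the paper, whose argument is a two-line substitution: instantiate Axiom \labelcref{finf} at the point $z=\left(g^{\cM_1}\right)^n(x)$ (which lies in $Z$ since $g^{\cM_1}=\left(f^{\cM_1}\right)^{-1}$ restricts to a bijection of $Z$), obtaining $\left(f^{\cM_1}\right)^n\circ\left(g^{\cM_1}\right)^n(x)\not\sim_Z\left(g^{\cM_1}\right)^n(x)$, and then note $\left(f^{\cM_1}\right)^n\circ\left(g^{\cM_1}\right)^n(x)=x$ by \Cref{AbelianInT1} (Axiom \labelcref{g=fInv} alone already gives this); no cyclic-order preservation is needed at all. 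You instead apply Axiom \labelcref{finf} at the point $x$ and transport the conclusion along $\left(f^{\cM_1}\right)^n$, which requires your auxiliary invariance principle that a $C_<$-preserving bijection of $Z$ preserves $\sim_Z$. That principle is true and your justification is sound: by Axioms \labelcref{fc1c2,fc2c4} the map $f^{\cM_1}$ restricts to a bijection of $Z$, the ``iff'' form of \Cref{fPreservesCyclic} passes to iterates, and for $a,b\in Z$ it yields a bijection between $Z\cap C_<(a,-,b)$ and $Z\cap C_<\left(\phi(a),-,\phi(b)\right)$, so finiteness is preserved in both arcs. What your route buys is a reusable invariance statement in the spirit of \Cref{cyclicInfPreservesCyclic}; what it costs is an extra lemma that the direct instantiation at $z=\left(g^{\cM_1}\right)^n(x)$ avoids. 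One shared caveat: Axiom \labelcref{finf} is literally stated for $n>1$, while both your argument and the paper's need it for all $n\geq 1$; the finite-satisfiability proof and \Cref{f'} make clear that $n\geq1$ is the intended reading, so this is a typo in the axiom rather than a gap in your proof.
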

\begin{proof}
	Since $x\in Z$, so is $\left(g^{\cM_1}\right)^n(x)$. Therefore, by Axiom \labelcref{finf}, 
	\[ \left(f^{\cM_1}\right)^n\circ \left(g^{\cM_1}\right)^n(x) \not\sim_Z \left(g^{\cM_1}\right)^n(x). \]
	By \Cref{AbelianInT1}, $\left(f^{\cM_1}\right)^n\circ \left(g^{\cM_1}\right)^n(x)=x$.
\end{proof}
\begin{lemma}\label{ff'FinLemma} Let $x\in M$ and $n\in \bN$. There are $k_1,k_2\in \bN$ such that
	\begin{align*}
	\left(f^{\cM_2}\right)^n(x) = P^{k_1}\circ \left(f^{\cM_1}\right)^n(x) \text{ and } \\
	\left(g^{\cM_2}\right)^n(x) = S^{k_2}\circ \left(g^{\cM_1}\right)^n(x)
	\end{align*}
\end{lemma}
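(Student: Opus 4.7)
The plan is to prove both statements by a straightforward induction on $n$, exploiting the fact established in \Cref{AbelianInT1} that $f^{\cM_1}, g^{\cM_1}, S, P$ pairwise commute on all of $M$. The base case $n=0$ is trivial with $k_1 = k_2 = 0$. For the inductive step, the key observation is that the very definition of $f^{\cM_2}$ in \Cref{defT2} can be rephrased as
\[
f^{\cM_2}(y) \in \bigl\{ f^{\cM_1}(y),\ P \circ f^{\cM_1}(y) \bigr\}
\]
for every $y \in M$, and analogously
\[
g^{\cM_2}(y) \in \bigl\{ g^{\cM_1}(y),\ S \circ g^{\cM_1}(y) \bigr\},
\]
where for the latter I use that $g^{\cM_1}$ commutes with $S$ (so that $g^{\cM_1} \circ S = S \circ g^{\cM_1}$).

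First I would handle the case $x \notin Z$ separately as a sanity check: by Axiom \labelcref{fId} one has $f^{\cM_1}(x) = x$, and since $S$ is the identity off $Z$ (Axiom \labelcref{axiomS}) the condition ``$S^n(x) = c_4$ for some $n$'' fails, whence $f^{\cM_2}(x) = f^{\cM_1}(x) = x$; thus both iterates fix $x$ and the statement holds with $k_1 = k_2 = 0$. For $x \in Z$, apply the inductive hypothesis to obtain $(f^{\cM_2})^{n-1}(x) = P^{k'} \circ (f^{\cM_1})^{n-1}(x)$ for some $k' \in \bN$, and then use the dichotomy above to write
\[
(f^{\cM_2})^n(x) = P^\epsilon \circ f^{\cM_1}\bigl( P^{k'} \circ (f^{\cM_1})^{n-1}(x)\bigr)
\]
for some $\epsilon \in \{0,1\}$. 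By \Cref{AbelianInT1}, $f^{\cM_1}$ and $P$ commute, so this equals $P^{k' + \epsilon} \circ (f^{\cM_1})^n(x)$, and one sets $k_1 := k' + \epsilon$. The argument for $g^{\cM_2}$ is identical with $S$ replacing $P$.

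There is essentially no obstacle: the lemma is a bookkeeping statement whose entire content is that each of the $n$ iterations introduces at most one extra factor of $P$ (resp.\ $S$), and commutativity from \Cref{AbelianInT1} lets us collect all these factors on the left. The only thing worth emphasizing in the write-up is that the commutativity is needed to move each $P$ past the remaining $f^{\cM_1}$'s; without it, one would only obtain $(f^{\cM_2})^n(x)$ as an alternating product, not a clean prefix $P^{k_1} \circ (f^{\cM_1})^n(x)$.
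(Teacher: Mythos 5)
Your proof is correct and follows essentially the same route as the paper: each application of $f^{\cM_2}$ (resp.\ $g^{\cM_2}$) contributes a factor $P^{\varepsilon}$ (resp.\ $S^{\nu}$) with $\varepsilon,\nu\in\{0,1\}$, and commutativity from \Cref{AbelianInT1} collects these into a single prefix. The paper writes the iterate out directly as an alternating composition and then commutes, while you package the same bookkeeping as an induction on $n$; the difference is purely presentational.
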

\begin{proof}
	By definition of $f^{\cM_2}, g^{\cM_2}$ (\Cref{defT2}), there are $\varepsilon_1,\dots,\varepsilon_n, \nu_1,\dots, \nu_k\in \Set{0,1}$ such that 
	\begin{align}
	& \label{EQQQ1} \left(f^{\cM_2}\right)^n  (x) = P^{\varepsilon_1}\circ f^{\cM_1}\circ \dots \circ P^{\varepsilon_n}\circ f^{\cM_1}(x) \\ 
	& \label{EQQQ2} \left(g^{\cM_2}\right)^n  (x) = S^{\nu_1}\circ g^{\cM_1}\circ \dots \circ S^{\nu_n}\circ g^{\cM_1}(x)
	\end{align}
	By \Cref{AbelianInT1}, the right hand side in Equation (\ref{EQQQ1}) is equal to \[P^{\varepsilon_1+\dots+\varepsilon_n}\circ \left(f^{\cM_1}\right)^n(x)\] and the right hand side in Equation (\ref{EQQQ2}) is equal to \[S^{\nu_1+\dots+\nu_n}\circ \left(f^{\cM_1}\right)^n(x).\]
\end{proof}

\begin{corollary}\label{ff'Fin} For all $n\in \bN$ and every $x\in M$:
	\[\left(f^{\cM_1}\right)^n(x)\sim_Z \left(f^{\cM_2}\right)^n(x) \text{ and } \left(g^{\cM_1}\right)^n(x)\sim_Z \left(g^{\cM_2}\right)^n(x)\] 
\end{corollary}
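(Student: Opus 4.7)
My plan is to derive the corollary directly from \Cref{ff'FinLemma} by observing that a single application of $S$ or $P$ (and hence finitely many) preserves $\sim_Z$-equivalence classes on $M$.

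The main step is to establish the sublemma that for every $y\in M$, both $S(y)\sim_Z y$ and $P(y)\sim_Z y$. I would split the argument by whether $y\in Z$. If $y\notin Z$, then Axiom \labelcref{axiomS} of \Cref{defT0} gives $S(y)=y=P(y)$, so the claim is immediate. If $y\in Z$, then $S(y)$ is by Axiom \labelcref{axiomS} the cyclic successor of $y$ in $Z$, meaning that there is no $z\in Z$ satisfying $C_<(y,z,S(y))$. Consequently $Z\cap C_<(y,-,S(y))=\emptyset$, which is finite, so $y\sim_Z S(y)$ by the definition of $\sim_Z$. The case of $P(y)$ is symmetric (using that $P$ is the cyclic predecessor, so $Z\cap C_<(P(y),-,y)=\emptyset$).

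Next I would invoke \Cref{XcloseEq}, which states that $\sim_Z$ is an equivalence relation, and iterate the sublemma: for every $k\in \bN$ and every $y\in M$, $S^k(y)\sim_Z y$ and $P^k(y)\sim_Z y$. Combined with \Cref{ff'FinLemma}, which produces $k_1,k_2\in\bN$ with $\left(f^{\cM_2}\right)^n(x)=P^{k_1}\circ \left(f^{\cM_1}\right)^n(x)$ and $\left(g^{\cM_2}\right)^n(x)=S^{k_2}\circ \left(g^{\cM_1}\right)^n(x)$, both $\sim_Z$-equivalences in the statement follow at once.

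There is no real obstacle here: the corollary is essentially a matter of unfolding the definitions of $S$, $P$, and $\sim_Z$, once \Cref{ff'FinLemma} and \Cref{XcloseEq} are in hand. The only mild subtlety is remembering to treat the case $y\notin Z$ separately, but there $S$ and $P$ act as the identity and the claim is trivial.
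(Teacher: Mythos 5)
Your proof is correct and matches the paper's intended argument: the paper states this corollary without proof as an immediate consequence of \Cref{ff'FinLemma}, the implicit point being exactly your sublemma that finitely many applications of $S$ or $P$ move a point only across finitely many elements of $Z$ (indeed across none, since $Z\cap C_<(y,-,S(y))=\emptyset$ for $y\in Z$ and $S,P$ fix points outside $Z$), so $\sim_Z$-classes are preserved and transitivity of $\sim_Z$ (\Cref{XcloseEq}) finishes it.
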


\section{Quantifier elimination in $T_2$}\label{secT2QE}

In this \namecref{secT2QE}, unless otherwise specified, we work inside $\cM_2$, so $f$ is $f^{\cM_2}$ and $g$ is $g^{\cM_2}$.

\begin{lemma}\label{f'}
	$\cM_2$ satisfies the following:
	\begin{enumerate}
		\item\label{f'c1c2} $f(Z\cap [c_1,c_2])= Z\cap  [c_3,c_4]$ and $f\upharpoonright\left(Z\cap [c_1,c_2]\right)$ is a partial order isomorphism, and its inverse is $g\upharpoonright  Z\cap  [c_3,c_4]$.
		\item\label{f'c2c4} $f(Z\cap (c_2,c_4]) = Z\cap [c_1,P(c_3))$ and $f\upharpoonright\left(Z\cap (c_2,c_4]\right)$ is a partial order isomorphisms, and its inverse is $g\upharpoonright  Z\cap  [c_1,P(c_3))$.
		\item\label{f'Id} $g(x) = f(x)=x$ for every $x\notin Z$
		\item\label{M2axiomFinjective} 	$f$ is injective and not surjective on $Z$. Moreover, $f(Z)= Z\setminus \Set{P(c_3)}$.
		\item\label{M2axiomGF} $g\circ f(x)=x$ for all $x\in M$.
		\item\label{M2axiomFG} $f\circ g(x)=x$ for all $x\in M\setminus \Set{P(c_3)}$.
		\item\label{f'inf}  For all $n\geq 1$ and for every $z\in Z$
		\[Z\cap C_<\left(z,-,f^n(z)\right)\text{ and }Z\cap C_<\left(f^n(z),-,z\right)\] are infinite, i.e. $z\not\sim_Z f^n(z)$. 
		\item\label{g'inf}  For all $n\geq 1$ and for every $z\in Z$
		\[Z\cap C_<\left(z,-,g^n(z)\right)\text{ and }Z\cap C_<\left(g^n(z),-,z\right)\] are infinite, i.e. $z\not\sim_Z g^n(z)$. 
	\end{enumerate}
\end{lemma}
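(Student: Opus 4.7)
The plan is to reduce all eight claims to careful bookkeeping around two distinguished convex subsets of $Z$:
\[ X_f := \Set{x\in M | S^n(x) = c_4 \text{ for some } n \in \bN}, \quad X_g := \Set{x\in M | S^n(x) = P(c_3) \text{ for some } n \in \bN}. \]
By Axioms \labelcref{axiomInfBetween}, \labelcref{axiomMinMax} and \labelcref{axiomS} of $T_0$, these are exactly the sets on which $f^{\cM_2}, g^{\cM_2}$ disagree with $f^{\cM_1}, g^{\cM_1}$; $X_f$ is a convex $\omega^*$-sequence in $Z \cap (c_3, c_4]$ with maximum $c_4$, and $X_g$ is a convex $\omega^*$-sequence in $Z \cap (c_2, c_3)$ with maximum $P(c_3)$. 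A key preliminary I would record is that Axiom \labelcref{fc2c4} of $T_1$ together with order-isomorphicity yields $f^{\cM_1}(P^n(c_4)) = P^{n+1}(c_3)$, so $f^{\cM_1}$ restricts to a bijection $X_f \to X_g$ with inverse $g^{\cM_1}$.

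For parts \labelcref{f'c1c2} and \labelcref{f'Id}, observe that $X_f \cap [c_1, c_2] = \emptyset$, $X_g \cap [c_3, c_4] = \emptyset$, and $X_f, X_g \subseteq Z$; hence $f^{\cM_2} = f^{\cM_1}$ on $[c_1, c_2]$ and on $M \setminus Z$ (symmetrically for $g^{\cM_2}$), and the claims then follow from Axioms \labelcref{fc1c2} and \labelcref{fId} of $T_1$. For \labelcref{f'c2c4}, I split $(c_2, c_4] = \left((c_2, c_4] \setminus X_f\right) \cupdot X_f$: on the first piece $f^{\cM_2} = f^{\cM_1}$ maps bijectively onto $[c_1, c_3) \setminus X_g$, and on $X_f$, $f^{\cM_2} = P \circ f^{\cM_1}$ maps bijectively onto $P(X_g) = \Set{P^n(c_3) | n \geq 2}$. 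Their disjoint union is exactly $[c_1, P(c_3))$, with order preservation because $X_f$ and $\Set{P^n(c_3) | n \geq 2}$ are the top blocks of $(c_2, c_4]$ and $[c_1, P(c_3))$ respectively. Part \labelcref{M2axiomFinjective} then follows: the images of $[c_1, c_2]$ and $(c_2, c_4]$ under $f^{\cM_2}$ are the disjoint sets $[c_3, c_4]$ and $[c_1, P(c_3))$, whose union is precisely $Z \setminus \Set{P(c_3)}$.

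Parts \labelcref{M2axiomGF} and \labelcref{M2axiomFG} are verified by case analysis on whether the input lies in $X_f$ (for \labelcref{M2axiomGF}) or $X_g$ (for \labelcref{M2axiomFG}). For \labelcref{M2axiomGF}: if $x \in Z \setminus X_f$ then $f^{\cM_2}(x) = f^{\cM_1}(x) \notin X_g$ (using $f^{\cM_1}(X_f) = X_g$), so $g^{\cM_2}(f^{\cM_2}(x)) = g^{\cM_1}(f^{\cM_1}(x)) = x$; if $x \in X_f$ then $f^{\cM_2}(x) \in X_g$ and $g^{\cM_2}(f^{\cM_2}(x)) = g^{\cM_1}(S \circ P \circ f^{\cM_1}(x)) = g^{\cM_1}(f^{\cM_1}(x)) = x$. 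Part \labelcref{M2axiomFG} is symmetric; the exclusion of $P(c_3)$ is forced because $S(P(c_3)) = c_3 \notin X_g$, so the shift-by-$S$ in $g^{\cM_2}$ falls off the top of $X_g$ (and indeed a direct check gives $f^{\cM_2}(g^{\cM_2}(P(c_3))) = c_3$). Finally, \labelcref{f'inf} and \labelcref{g'inf} are immediate from \Cref{ff'Fin}: since $(f^{\cM_2})^n(z) \sim_Z (f^{\cM_1})^n(z)$ and $z \not\sim_Z (f^{\cM_1})^n(z)$ by Axiom \labelcref{finf} of $T_1$, transitivity of $\sim_Z$ (\Cref{XcloseEq}) gives $z \not\sim_Z (f^{\cM_2})^n(z)$; the $g$-case is identical, using \Cref{ginfLemma} in place of Axiom \labelcref{finf}.

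The main obstacle is \labelcref{f'c2c4}: one has to simultaneously check that the shift-by-$P$ on $X_f$ moves the image exactly one step down within $X_g$, that no collision occurs between the shifted image $P(X_g)$ and the image of the complement $f^{\cM_1}((c_2,c_4] \setminus X_f) = [c_1, c_3) \setminus X_g$, and that the resulting image is the clean interval $[c_1, P(c_3))$. Everything else is a mechanical chase once the roles of $X_f$, $X_g$ and the swap $f^{\cM_1}\colon X_f \leftrightarrow X_g$ are pinned down.
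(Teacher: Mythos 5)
Your proposal is correct and takes essentially the same approach as the paper: your $X_f$ and $X_g$ are exactly the convex $\omega^*$-blocks singled out in the definition of $\cM_2$, and your case analyses for the injectivity, composition, and $\sim_Z$ claims mirror the paper's (which likewise derives parts (1)--(3) directly from the axioms of $T_1$ and parts (7)--(8) from Axiom (13), \Cref{ginfLemma} and \Cref{ff'Fin}). The only minor divergence is part (6), which the paper deduces from parts (3)--(5) by writing $x=f(y)$ and computing $f\circ g\circ f(y)=f(y)$, whereas you re-run the symmetric case analysis; both are fine.
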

\begin{proof}
	\begin{itemize}
		\item 	\Cref{f'c1c2,f'c2c4,f'Id} follow by definition of $f^{\cM_2}$ and by Axioms \labelcref{fc1c2,fc2c4,fId} in \Cref{defT1}.
		
		\item \Cref{M2axiomFinjective} follows from \Cref{f'c1c2,f'c2c4}, as 
		\begin{align*}
		& Z = \left(Z\cap [c_1,c_2]\right)\cupdot \left(Z\cap(c_2,c_4]\right) \\ 
		& Z\setminus P(c_3) =  \left(Z\cap [c_1,P(c_3))\right)\cupdot \left(Z\cap[c_3,c_4]\right).
		\end{align*}
		\item To prove \Cref{M2axiomGF}, we separate into two cases: 
		\begin{itemize}
			\item if $S^n(x)=c_4$ for some $n\in \bN$, then $S^n\circ f^{\cM_1}(x) =P(c_3)$, so
			\[S^{n+1}\circ f^{\cM_2}(x)= S^{n} \circ S\circ P\circ f^{\cM_1}(x) = S^{n} \circ f^{\cM_1}(x) = P(c_3).\] 
			So by definition of $g^{\cM_2}$,
			\[ g^{\cM_2}\circ f^{\cM_2}(x) = g^{\cM_1}\circ S\circ P\circ f^{\cM_1}(x) = x. \]
			
			\item if $S^n(x)\neq c_4$ for all $n\in \bN$, then $S^n\circ f^{\cM_1}(x)\neq P(c_3)$ for all $n\in\bN$. So
			\[ g^{\cM_2}\circ f^{\cM_2}(x) = g^{\cM_1}\circ f^{\cM_1} (x) = x. \]
		\end{itemize}
		\item To prove \Cref{M2axiomFG}, by \Cref{M2axiomFinjective,f'Id}, for all $x\in M\setminus \Set{P(c_3)}$, $x = f(y)$ for some $y\in M$, therefore by \Cref{M2axiomGF}
		\[ f\circ g(x) = f\circ g\circ f(y) = f(y) = x. \]
		\item \Cref{f'inf} follows from Axiom \labelcref{finf} in \Cref{defT1} and \Cref{ff'Fin}.
		\item \Cref{g'inf} follows from \labelcref{ginfLemma} and \Cref{ff'Fin}.
	\end{itemize}
\end{proof}
\begin{corollary}\label{fgIFF}
	Let $a,b\in Z$, $\square\in \Set{<,>,=}$
	\begin{enumerate}
		\item  If $a\in [c_1,c_2]$ and $b\in [c_3, c_4]$, then $\cM_2\models f(a)\square b \iff a\square g(b)$.
		\item  If $a\in [c_1,c_2]$ and $b\notin [c_3, c_4]$, then $\cM_2\models f(a)\square b \iff c_3\square b$.
		\item  If $a\in (c_2,c_4]$ and $b\in [c_1, P(c_3))$, then $\cM_2\models f(a)\square b \iff a\square g(b)$.
		\item  If $a\in (c_2,c_4]$ and $b\notin [c_1, P(c_3))$, then $\cM_2\models f(a)\square b \iff c_1\square b$.
		\item  If $a\in [c_1,P(c_3))$ and $b\notin (c_2, c_4]$, then $\cM_2\models g(a)\square b \iff c_4\square b$.
		\item  If $a\in [c_3,c_4]$ and $b\notin [c_1, c_2]$, then $\cM_2\models g(a)\square b \iff c_1\square b$.
	\end{enumerate}
\end{corollary}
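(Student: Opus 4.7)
The corollary is a case analysis from \Cref{f'}, which completely describes the behavior of $f$ and $g$ on the partition of $Z$ into the four intervals $[c_1,c_2]$, $(c_2,c_4]$, $[c_3,c_4]$ and $[c_1,P(c_3))$. My plan is to dispatch the six clauses by separating them into two flavors.

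Items~(1) and~(3) are a genuine ``move $g$ across'' step. In both, the hypothesis puts $a$ in the domain interval on which $f$ is, by \Cref{f'}\labelcref{f'c1c2,f'c2c4}, an order-preserving bijection onto the image interval containing $b$, with inverse equal to $g$ restricted to that image. Using $g\circ f=\mathrm{id}$ from \Cref{f'}\labelcref{M2axiomGF} together with order-preservation, I apply $g$ to both sides of $f(a)\,\square\, b$ to obtain $a\,\square\, g(b)$. For the converse, since $b$ lies in the image and is not $P(c_3)$ (it lies strictly below $P(c_3)$ in item~(3) and at or above $c_3$ in item~(1)), I apply $f$ and use $f\circ g=\mathrm{id}$ on $M\setminus\{P(c_3)\}$ from \Cref{f'}\labelcref{M2axiomFG}.

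Items~(2), (4), (5), (6) are a ``collapse to a constant'' step. In each, the hypothesis on $b$ places $b$ outside the image interval $I\subseteq Z$ in which $f(a)$ (or $g(a)$) must lie by \Cref{f'}. The designated constant on the right-hand side---$c_3$, $c_1$, $c_4$, $c_1$ respectively---is picked to be an endpoint of $I$ that belongs to $I$, so both $f(a)$ (or $g(a)$) and that constant lie in $I$ while $b$ lies outside; hence they sit on the same side of $b$, giving the biconditional for $\square\in\{<,>\}$. The case $\square$ is $=$ is vacuous on both sides since $b\notin I$ forces both equalities to fail.

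The only delicate point, and essentially the whole content of the bookkeeping, is matching open versus closed endpoints of the image intervals. In item~(4), for instance, one must use $c_1$ rather than $P(c_3)$, because $I=[c_1,P(c_3))$ is open at $P(c_3)$, so $b$ is allowed to equal $P(c_3)$; choosing $c_1\in I$ as the representative avoids this pitfall, and a symmetric choice works in items~(2), (5), (6). Once this is observed, each clause reduces to a one-line check against the relevant part of \Cref{f'}, and I expect no further obstacle.
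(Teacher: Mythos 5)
Your proposal is correct and follows the same route as the paper, whose proof is just a citation of the relevant clauses of \Cref{f'} (the order-isomorphism clauses \labelcref{f'c1c2,f'c2c4} together with \labelcref{M2axiomGF,M2axiomFG}); you merely spell out the details the paper leaves implicit, including the convexity/``same side of $b$'' argument for items (2), (4), (5), (6) and the correct choice of closed endpoint (e.g.\ $c_1$ rather than $P(c_3)$ in item (4)).
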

\begin{proof}\ 
	
	$(1)$ and $(2)$ follow from  \Cref{f'}, \Cref{f'c1c2,M2axiomFG}.
	
	$(3)$ and $(4)$ follow from \Cref{f'}, \Cref{f'c2c4,M2axiomFG}.
	
	$(5)$ follows from \Cref{f'}, \Cref{f'c2c4,M2axiomGF}.
	
	$(6)$ follows from \Cref{f'}, \Cref{f'c1c2,M2axiomGF}.
\end{proof}
\begin{corollary}\label{elimination 1}
	Let $x\in M, y\in Z$, $\square\in \Set{<,>,=}$.
	\begin{align*}
	& \cM_2\models f(x)\square y \iiff 
	\begin{pmatrix}
	\Big(& \left(x\notin Z \right) & \land & x\square y & \Big) & \lor \\
	\Big(& \left(x\in Z \cap [c_1, c_2] \land y\in [c_3,c_4]\right) & \land & x\square g(y) & \Big) & \lor \\
	\Big(& \left(x\in Z \cap [c_1, c_2] \land y\notin [c_3,c_4]\right) & \land & c_3\square y & \Big) & \lor \\
	\Big(& \left(x\in Z \cap (c_2, c_4] \land y\in [c_1, P(c_3))\right) & \land & x\square g(y) & \Big) & \lor \\
	\Big(& \left(x\in Z \cap (c_2, c_4] \land y\notin [c_1, P(c_3))\right) & \land & c_1\square y & \Big) &  
	\end{pmatrix} \\
	& \cM_2\models g(x)\square y \iiff 
	\begin{pmatrix}
	\Big(& \left(x\notin Z \right) & \land & x\square y & \Big) & \lor \\
	\Big(& \left(x\in Z\cap [c_3,c_4]\land y\in [c_1, c_2]\right) & \land & x\square f(y) & \Big) & \lor \\
	\Big(& \left(x\in Z\cap [c_3,c_4]\land y\notin [c_1, c_2]\right) & \land & c_1\square y & \Big) & \lor \\
	\Big(& \left(x\in Z\cap [c_1,P(c_3))\land y\in (c_2, c_4]\right) & \land & x\square f(y) & \Big) & \lor \\
	\Big(& \left(x\in Z\cap [c_1,P(c_3))\land y\notin (c_2, c_4]\right) & \land & c_4\square y & \Big) & \lor \\
	\Big(& \left(x=P(c_3)\right) & \land & P(c_3)\square y & \Big) &	  
	\end{pmatrix}.
	\end{align*}
\end{corollary}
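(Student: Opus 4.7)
The plan is a direct case analysis on the position of $x \in M$, reducing each atomic formula of the form $f(x) \square y$ or $g(x) \square y$ to one in which $f$ and $g$ are applied only to the free variable $y$ or to constants. The two equivalences have the same structure, so I describe $f(x) \square y$ first.

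If $x \notin Z$, then \Cref{f'Id} gives $f(x) = x$, yielding the first disjunct immediately. If $x \in Z$, I split $Z = (Z \cap [c_1, c_2]) \cup (Z \cap (c_2, c_4])$. For $x \in Z \cap [c_1, c_2]$, \Cref{f'c1c2} places $f(x)$ in $Z \cap [c_3, c_4]$, and I then split on whether $y \in [c_3, c_4]$; the two resulting subcases are precisely parts (1) and (2) of \Cref{fgIFF}, which give $x \square g(y)$ and $c_3 \square y$ respectively. The case $x \in Z \cap (c_2, c_4]$ is symmetric, using \Cref{f'c2c4} together with parts (3) and (4) of \Cref{fgIFF}.

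The $g(x) \square y$ equivalence proceeds analogously, but with one extra disjunct to accommodate the fact that $f$ is not surjective (\Cref{M2axiomFinjective}): I partition $Z = (Z \cap [c_3, c_4]) \cup (Z \cap [c_1, P(c_3))) \cup \{P(c_3)\}$. The first two pieces are handled by \Cref{f'c1c2,f'c2c4} together with parts (5) and (6) of \Cref{fgIFF}, following the same pattern as above. For the singleton $\{P(c_3)\}$, the value of $g^{\cM_2}(P(c_3))$ is read off directly from the first clause of \Cref{defT2}, which applies since $S^0(P(c_3)) = P(c_3)$, yielding the constant term that replaces $g(x)$ in the final disjunct.

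The proof is essentially mechanical once \Cref{f'} and \Cref{fgIFF} are in hand; the only care required is checking that the disjuncts on the right-hand side of each equivalence are mutually exhaustive (so that each $x$ falls into exactly one case and the biconditional runs both ways), and that the constant-term reductions such as $c_3 \square y$ when $y \notin [c_3, c_4]$ genuinely capture the comparison against any element of the image of $f \upharpoonright (Z \cap [c_1, c_2])$. No new ideas beyond the machinery already assembled are required.
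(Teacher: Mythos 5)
Your overall route is the one the paper intends: the corollary is stated there with no separate argument, as an immediate consequence of \Cref{f'} and \Cref{fgIFF}, and your partition of $Z$ into $[c_1,c_2]$ and $(c_2,c_4]$ for the $f$-equivalence (using \Cref{f'Id} for $x\notin Z$ and parts (1)--(4) of \Cref{fgIFF}), respectively into $[c_3,c_4]$, $[c_1,P(c_3))$ and $\{P(c_3)\}$ for the $g$-equivalence (using \Cref{f'c1c2,f'c2c4} and parts (5)--(6) of \Cref{fgIFF}), reproduces exactly that; your remark about checking that the guards are mutually exclusive and exhaustive is the right bookkeeping.

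The gap is in the final disjunct, the case $x=P(c_3)$. You assert that reading off the first clause of \Cref{defT2} ``yields the constant term that replaces $g(x)$ in the final disjunct,'' but you never compute that value, and the computation does not give $P(c_3)$: if $0\in\bN$ (as your appeal to $S^0(P(c_3))=P(c_3)$ assumes), the first clause gives $g^{\cM_2}(P(c_3))=g^{\cM_1}\circ S(P(c_3))=g^{\cM_1}(c_3)=c_1$, since $f^{\cM_1}\upharpoonright (Z\cap[c_1,c_2])$ is an order isomorphism onto $Z\cap[c_3,c_4]$ (Axiom \labelcref{fc1c2} of \Cref{defT1}) and hence sends $c_1$ to $c_3$; if instead $\bN$ starts at $1$, the second clause applies and gives $g^{\cM_1}(P(c_3))=c_4$. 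In neither case is the value $P(c_3)$, which by Axiom \labelcref{axiomInfBetween} is distinct from both $c_1$ and $c_4$, so the biconditional your final disjunct requires, $g(P(c_3))\square y \iiff P(c_3)\square y$ for all $y\in Z$, is simply false (e.g.\ $y=c_2$ and $\square$ equal to $<$ separates $c_1$ from $P(c_3)$). What your argument actually establishes is the corollary with the last line reading $c_1\square y$ (equivalently $g(P(c_3))\square y$, a comparison against a constant term, which is all the later rank-reduction arguments need); the displayed $P(c_3)\square y$ appears to be an error in the statement. A correct write-up must carry out this one-line computation and either flag the discrepancy or restate the disjunct --- claiming that the definition ``yields'' the displayed constant is the step that fails.
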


\begin{remark}\label{fgIffTrivial}
	If $x\notin Z$ then $\cM_2\models f(x)=g(x)=x$. In particular,
	\begin{itemize}
		\item $\cM_2\models f(x)\in Z\iiff x\in Z$ for all $x\in M$.
		\item $\cM_2\models g(x)\in Z\iiff x\in Z$ for all $x\in M$.
		\item $\cM_2\models f(x)\square y \iiff g(x)\square y\iiff x\square y $
		for any $x\in M\setminus Z, y\in M$, $\square\in \Set{<,>,=}$.
	\end{itemize}
\end{remark}
\begin{remark}\label{reduce to yin Z Lemma}
	If $x\in Z, y\notin Z$ then:
	\begin{itemize}
		\item $\cM_2\models x>y \iiff x\geq \pi(y)$.
		\item $\cM_2\models x< y \iiff x\leq P\circ \pi(y)$.
	\end{itemize}
\end{remark}
\begin{corollary}\label{reduce to yin Z}
	\begin{align*}
	& T_2\models \left[ x\in Z\land y\notin Z \land x>y \right]\iiff \left[ x\in Z\land y\notin Z \land x\geq \pi(y)\land \pi(y)\in Z \right] \\
	& T_2\models \left[ x\in Z\land y\notin Z \land x<y \right]\iiff \left[ x\in Z\land y\notin Z \land x\leq P\circ\pi(y)\land P\circ\pi(y)\in Z \right] \\
	& T_2\models \left[ x\in Z\land y\notin Z \land x=y \right]\iiff \left[ x\in Z\land y\notin Z \land c_1=y\right]
	\end{align*}
\end{corollary}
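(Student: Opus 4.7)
The plan is to derive each of the three lines of the corollary from Remark~\ref{reduce to yin Z Lemma} together with a few immediate consequences of the axioms of $T_0 \subseteq T_2$. Throughout I work under the common hypothesis $x \in Z \land y \notin Z$ that appears on both sides of each biconditional.

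For the two strict-order lines, Remark~\ref{reduce to yin Z Lemma} already supplies the key equivalences $x > y \iff x \geq \pi(y)$ and $x < y \iff x \leq P \circ \pi(y)$ under this hypothesis. It then suffices to observe that the additional conjuncts ``$\pi(y) \in Z$'' and ``$P \circ \pi(y) \in Z$'' on the right-hand sides are $T_0$-tautologies and so may be conjoined freely. The first is exactly Axiom~\ref{axiomPi}, which asserts $\pi(x) \in Z$ for all $x$. For the second, Axiom~\ref{axiomS} states that $S$ is the identity outside $Z$ and the cyclic successor on $Z$, so $S$ restricts to a bijection $Z \to Z$; in particular $P = S^{-1}$ also maps $Z$ into $Z$, whence $\pi(y) \in Z$ immediately yields $P \circ \pi(y) \in Z$.

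For the equality line, both sides will turn out to be unsatisfiable under the hypothesis, hence equivalent. On the left, $x = y$ together with $x \in Z$ and $y \notin Z$ is a direct contradiction. On the right, $y = c_1$ combined with $y \notin Z$ would force $c_1 \notin Z$, contradicting Axiom~\ref{axiomMinMax}, which asserts $c_1 = \min(Z) \in Z$. Both sides therefore reduce to $\bot$ and the biconditional is trivially valid.

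There is no genuine obstacle; the work is purely bookkeeping to confirm that the additional conjuncts in the corollary are harmless consequences of the base theory. The only small subtlety is recalling that $P$ preserves $Z$, which is not stated as its own axiom but follows at once from the description of $S$ in Axiom~\ref{axiomS}.
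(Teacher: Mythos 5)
Your proposal is correct and is essentially the paper's own (implicit) argument: the paper states this corollary with no separate proof, treating it as immediate from Remark~\ref{reduce to yin Z Lemma} together with the trivial facts that $\pi$ always lands in $Z$ (Axiom~\labelcref{axiomPi}), that $P$ preserves $Z$ (via Axiom~\labelcref{axiomS}, cf.\ Remark~\ref{conseqDLOZ}), and that under $x\in Z\land y\notin Z$ both $x=y$ and $c_1=y$ are contradictory since $c_1=\min(Z)\in Z$. Your write-up simply makes that bookkeeping explicit, so there is nothing to add.
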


\begin{definition}
	Following standard terminology, a \emph{constant term} is a term with no free variables.
\end{definition}

\begin{definition}
	Given two $\cL_1$-definable maps $F,G:M\to M$, denote $F\approx G$ if there are finitely many constant terms $\tau_1,\dots, \tau_k$, such that \[ T_2\models (\forall x)\ \left[F(x)=G(x)\lor \bigvee_{i=1}^k x=\tau_i\right]. \]
	$\approx$ is an equivalence relation. For any $\cL_1$-definable map $F:M\to M$, let $[F]$ be its equivalence class.
\end{definition}

\begin{lemma}\label{fS=Sf}
	$f\circ S\approx S\circ f$.
\end{lemma}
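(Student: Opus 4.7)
The plan is to compare $f^{\cM_2}$ with $f^{\cM_1}$ pointwise and transfer the commutation $f^{\cM_1}\circ S = S\circ f^{\cM_1}$ supplied by \Cref{AbelianInT1}. By \Cref{defT2}, $f^{\cM_2}$ differs from $f^{\cM_1}$ exactly on the set $X:=\Set{x\in M | S^n(x)=c_4\text{ for some }n\in \bN}$, i.e.\ on the $\omega^*$-tail $\Set{c_4,P(c_4),P^2(c_4),\dots}$, where $f^{\cM_2}(x)=P\circ f^{\cM_1}(x)$. I therefore expect $f\circ S = S\circ f$ to fail only at the unique point where $S$ ``crosses the boundary'' of $X$, which will turn out to be $c_4$.

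First I would dispose of $x\notin Z$, where $S(x)=x$ and $f(x)=x$ by \Cref{f'}, so both sides reduce to $x$. For $x\in Z$, I split by membership in $X$. Note that $x\notin X$ forces $S(x)\notin X$, since $S^m(S(x))=c_4$ would give either $x=P(c_4)\in X$ (when $m=0$) or $S^{m+1}(x)=c_4$ (when $m\geq 1$). When $x\notin X$, both $f$-values coincide with their $\cM_1$ counterparts and \Cref{AbelianInT1} closes the case. When $x\in X$ and $x\neq c_4$, writing $x=P^k(c_4)$ with $k\geq 1$, we have $S(x)=P^{k-1}(c_4)\in X$, and
\[ f(S(x)) = P\circ f^{\cM_1}(S(x)) = P\circ S\circ f^{\cM_1}(x) = f^{\cM_1}(x) = S\circ P\circ f^{\cM_1}(x) = S(f(x)). \]

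The only remaining point is $x=c_4$, and this is precisely where the identity fails. Since $S(c_4)=c_1$ lies outside $X$ (infinitely many elements of $Z$ separate $c_1$ from $c_4$ by Axiom \labelcref{axiomInfBetween}), we get $f(S(c_4))=f^{\cM_1}(c_1)=c_3$ using Axiom \labelcref{fc1c2}, while $f^{\cM_1}(c_4)=P(c_3)$ by Axiom \labelcref{fc2c4}, so $S(f(c_4))=S\circ P\circ P(c_3)=P(c_3)\neq c_3$. Thus $\tau_1:=c_4$ is forced and suffices to witness $f\circ S \approx S\circ f$. The only delicate ingredient in the whole argument is the combinatorial bookkeeping that $S$ preserves the partition $M = X\cupdot (M\setminus X)$ everywhere except at $c_4$; this is what restricts the failure of commutation to a single constant term.
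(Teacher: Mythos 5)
Your proof is correct, but it takes a different route from the paper's. The paper stays entirely inside $\cM_2$: it quotes \Cref{f'}\labelcref{f'c1c2,f'c2c4} (the description of $f^{\cM_2}$ as order isomorphisms $Z\cap[c_1,c_2]\to Z\cap[c_3,c_4]$ and $Z\cap(c_2,c_4]\to Z\cap[c_1,P(c_3))$), observes that an order isomorphism of discrete pieces commutes with the successor at interior points, and simply throws $c_1,c_2,c_4$ into the exceptional set. You instead go back to \Cref{defT2} and treat $f^{\cM_2}$ as a pointwise perturbation of $f^{\cM_1}$ on the $\omega^*$-tail $X=\Set{P^n(c_4)}$, transferring the exact commutation $f^{\cM_1}\circ S=S\circ f^{\cM_1}$ of \Cref{AbelianInT1} and checking that $S$ respects the partition $M=X\cupdot(M\setminus X)$ except at $c_4$; the bookkeeping there (that $x\notin X$ forces $S(x)\notin X$, and $x\in X\setminus\Set{c_4}$ forces $S(x)\in X$) is the only delicate point and you handle it correctly, including $c_1\notin X$ via Axiom \labelcref{axiomInfBetween}. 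What your approach buys is a sharper conclusion --- commutation holds everywhere except at the single constant term $c_4$, and you even verify it genuinely fails there, which is more than $\approx$ requires --- at the cost of leaning on \Cref{AbelianInT1} (a fact about $\cM_1$) plus the definition of the perturbation, whereas the paper's argument only needs the already-packaged $\cM_2$-facts of \Cref{f'} and is content with the larger exceptional set $\Set{c_1,c_2,c_4}$, which is immaterial for the relation $\approx$.
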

\begin{proof}\ 
	
	\begin{itemize}
		\item If $x\notin Z$  then both $S$ and $f$ are the identity on $x$, so the equality $f\circ S(x)= S\circ f(x)$ is trivial.
		\item If $x\in Z$ and  $c_1< x<c_2$ or $c_2<x<c_4$ then the equality $f\circ S(x)= S\circ f(x)$ follows by \Cref{f'c1c2,f'c2c4} in \Cref{f'}.
	\end{itemize}
	In conclusion, the equality $f\circ S(x)= S\circ f(x)$ holds for all $x\neq c_1, c_2,c_4$.
	
\end{proof}

For any finite-to-one map $F,F',G,G':M\to M$, if $F\approx F'$ and $G\approx G'$ then $F\circ G\approx F\circ G$. Since $f,S,P$ are injective and $g$ is injective outside $\{P(c_3)\}$, the composition $[F]\circ[G]:=[F\circ G]$ is well defined, for any composition of $f,g,S,P$.
\begin{proposition}\label{fgSPAbelian}
	$\Braket{[f],[g],[S],[P]}_{cl}$, the closure of $\Set{[f],[g],[S],[P]}$ under composition is an Abelian group.
\end{proposition}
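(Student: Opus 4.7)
The plan is to reduce the statement to an easy consequence of the preceding lemma \Cref{fS=Sf}. First I would observe that $[g]$ is a two-sided inverse of $[f]$ and $[P]$ is a two-sided inverse of $[S]$ in the monoid of $\approx$-classes of $\cL_1$-definable finite-to-one self-maps of $M$. Indeed, by \Cref{M2axiomGF}, $g\circ f = \mathrm{id}$ on all of $M$, and by \Cref{M2axiomFG}, $f\circ g(x) = x$ for all $x\in M\setminus\{P(c_3)\}$, so $f\circ g \approx \mathrm{id}$ (witnessed by the single constant term $P(c_3)$). The identities $S\circ P = P\circ S = \mathrm{id}$ hold everywhere by Axiom \labelcref{axiomS} of \Cref{defT0} together with \Cref{conseqDLOZ}.

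Consequently, $\langle [f],[g],[S],[P]\rangle_{cl}$ coincides with the subgroup $\langle [f],[S]\rangle_{grp}$ generated by $[f]$ and $[S]$ inside the group of invertible $\approx$-classes. By \Cref{fS=Sf} we have $[f]\circ[S] = [S]\circ[f]$, so this two-generator group is a quotient of the free abelian group on $\{[f],[S]\}$ and is therefore abelian.

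The only small verification needed before invoking this argument is that composition descends to $\approx$-classes on the generators of interest, which the paper has already recorded in the paragraph immediately preceding the statement: since $f,S,P$ are injective on $M$ and $g$ is injective on $M\setminus\{P(c_3)\}$, any finite ``exceptional set'' coming from $F\approx F'$ pulls back along $G$ to another finite set, so $F\circ G \approx F'\circ G$ and similarly on the other side. I do not expect any real obstacle here; the substantive content of the proposition is entirely contained in \Cref{fS=Sf}, and everything else is a formal manipulation in the quotient monoid.
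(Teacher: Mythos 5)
Your proposal is correct and follows essentially the same route as the paper's own proof: establish $[g][f]=[f][g]=[S][P]=[P][S]=1$ (using \Cref{f'} and the constant-term exception at $P(c_3)$), reduce the closure to the group generated by $[f]$ and $[S]$, and conclude commutativity from \Cref{fS=Sf}. Your added remark that composition is well defined on $\approx$-classes is exactly the observation the paper records in the paragraph preceding the proposition, so nothing is missing.
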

\begin{proof}
	\begin{align*}
	T_2\supset T_0 &\models P\circ S(x) = S\circ P = x \\
	T_2 &\models \forall (x\neq P(c_3)) g\circ f(x) =f\circ g (x) = x.
	\end{align*}
	So $[g][f]=[f][g]=[P][S]=[S][P]=1$. In particular $[f],[S]$ are invertible and $\Braket{[f],[g],[S],[P]} = \Braket{[f],[S]}_{grp}$ where $\Braket{[f],[S]}_{grp}$ is the group generated by $\Set{[f],[g]}$. By \Cref{fS=Sf}, $[f][S]=[S][f]$. The claim now follows from the fact the group defined by $\Braket{a,b | ab=ba}$ is Abelian.
\end{proof}

\begin{remark}\label{finiteInZstep}
	Let $x\in M$ and  $F\in\Set{S,P,\pi}$. If there are infinitely many elements in $Z$ between $x$ and $F(x)$, then $F(x)\in \Set{c_1,c_4}$.
\end{remark}
By \emph{infinitely many elements in $Z$ between $x$ and $F(x)$}, we mean with respect to the order $<$ and not the cyclic order $C_<$, i.e.,
either $x<F(x)$ and $Z\cap [x,F(x)]\geq \aleph_0$, or $F(x)<x$ and $Z\cap [F(x),x]\geq\aleph_0$.

This is weaker than $x\not\sim_Z F(x)$; for example, $c_1\sim_Z c_4$ but there are infinitely many elements in $Z$ between $c_1$ and $c_4$.
\begin{lemma}\label{finiteInZ}
	Let $F,G\in \Braket{S,P,\pi}_{cl}$. Then there are finitely many constant terms $\tau_1,\dots, \tau_k$, such that if $F(x)\notin\Set{\tau_1,\dots, \tau_k}$, then there are only finitely many elements in $Z$ between $x$ and $F(x)$.
\end{lemma}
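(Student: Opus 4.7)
The plan is to exploit the normal form for compositions of $S, P, \pi$ given by \Cref{DLOZinZ}, reducing the problem to a single application of \Cref{finiteInZstep} together with elementary bookkeeping. First I would observe that by \Cref{DLOZinZ}, for every $F \in \Braket{S,P,\pi}_{cl}$ there exist $N, M \in \bZ$ such that $F(x) = S^N(x)$ for every $x \in Z$, and either $F(x) = x$ for every $x \notin Z$ (when $\pi$ does not appear in $F$) or $F(x) = S^M \circ \pi(x)$ for every $x \notin Z$. This reduces the proof to analyzing $S^N$ on $Z$ and $S^M \circ \pi$ off $Z$ separately.

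For $x \in Z$, applying $S$ yields the immediate $Z$-successor except at $c_4$, where it wraps to $c_1$; iterating, either the $|N|$-fold iteration never encounters the wrap-around (whence $|Z \cap [x, S^N(x)]| \le |N|+1$, which is finite) or it does, forcing $x \in \Set{P^{i}(c_4): 0 \le i < N}$ when $N \ge 0$ (symmetric for $N<0$). In the exceptional case, $F(x) = S^N(x)$ lies in the explicit finite set $\Set{S^{j}(c_4) : 1 \le j \le N}$ of constant terms. For $x \notin Z$ with $\pi$ appearing in $F$, axiom \labelcref{axiomPi} together with \Cref{conseqDLOZ} give $\pi(x) = c_1$ when $x > c_4$ (so $F(x) = S^M(c_1)$ is a constant term), while if $x \le c_4$ then $\pi(x)$ is the immediate $Z$-successor of $x$; the analysis of $S^N$ just given, applied to $\pi(x) \in Z$, combined with the triangle inequality $|Z \cap [x, F(x)]| \le |Z \cap [x, \pi(x)]| + |Z \cap [\pi(x), F(x)]|$ (and its routine modification when $\pi(x)$ is not linearly between $x$ and $F(x)$), then supplies a uniform finite bound except when $F(x)$ again lands in the finite set of constant terms described above.

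Collecting the union of all the exceptional values of $F(x)$ identified in these cases --- finitely many explicit constant terms built from $c_1$, $c_4$, $S$, $P$ --- yields the desired list $\tau_1, \dots, \tau_k$. The principal bookkeeping subtlety I anticipate is carefully distinguishing the linear and cyclic notions of \emph{between} (as emphasized in the paragraph immediately preceding the lemma statement), since $S$ is cyclic while the count is linear and the wrap-around is precisely what produces the exceptional values; once the normal form from \Cref{DLOZinZ} is in hand, no genuinely new ideas are required, only careful case analysis.
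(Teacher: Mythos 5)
Your proof is correct, but it takes a genuinely different route from the paper's. The paper argues purely syntactically on an arbitrary composition $F=G_k\circ\dots\circ G_1$ with $G_i\in\{S,P,\pi\}$: writing $F_i$ for the partial compositions, if infinitely many points of $Z$ lie between $x$ and $F(x)$, then some single step has infinitely many points of $Z$ between $F_{i-1}(x)$ and $F_i(x)$, so \Cref{finiteInZstep} forces $F_i(x)\in\{c_1,c_4\}$ and hence $F(x)$ lies in the finite set of values of the suffix compositions at $c_1,c_4$; no normal form is needed, and \Cref{finiteInZstep} is invoked exactly once per step. You instead first normalize $F$ via \Cref{DLOZinZ} to $S^N$ on $Z$ and to the identity or $S^M\circ\pi$ off $Z$, and then carry out an explicit wrap-around analysis of the iterates of the cyclic successor, which in effect re-derives the content of \Cref{finiteInZstep} for powers of $S$ rather than citing it step by step. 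What your version buys is concreteness: an explicit list of exceptional constant terms (of the form $S^j(c_4)$, $P^j(c_1)$, $S^M(c_1)$, etc.) and an explicit cardinality bound ($|N|+1$ points of $Z$ in the non-exceptional case), at the cost of relying on the normal form and of the extra bookkeeping you flag — the linear-versus-cyclic distinction and the triangle-type inequality $Z\cap[x,F(x)]\subseteq (Z\cap[x,\pi(x)])\cup(Z\cap[\pi(x),F(x)])$ for the off-$Z$ case, both of which are routine and go through. The paper's chain argument is shorter and generalizes without any commutation or normal-form input, which is why it keeps the exceptional terms in the abstract form $F'(c_1),F'(c_4)$ for suffixes $F'$ of $F$.
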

\begin{proof}
	Let $F=G_k\circ\dots\circ G_1$ where $G_1,\dots, G_k\in \Set{S,P,\pi}$. Let $F_i:=Id$, $F_i:= G_i\circ\dots\circ G_1$ for any $1\leq i\leq k$, so $F=F_k$. If there are infinitely many elements in $Z$ between $x$ and $F(x)$, then there is some $1\leq i\leq k$ with infinitely many elements in $Z$ between $F_i(x)$ and $F_{i-1}(x)$, so by \Cref{finiteInZstep}, $F_i(x)\in \Set{c_1,c_4}$ and thus $F(x)= F_k(x) = F_{k-i}\circ F_i(x) \in \Set{F_{k-i}(c_1), F_{k-i}(c_4)}$. So if 
	\[F(x)\notin \Set{F_i(c) | 0\leq i\leq k-1, c\in\Set{c_1,c_4} }\] then there are finitely many elements in $Z$ between $x$ and $F(x)$.
\end{proof}
\begin{lemma}\label{f'circularAxiom} 
	$C_<\left(f^m(z),f^n(z),z\right) $ for all $m>n>0$ and for every $z\in Z$.
\end{lemma}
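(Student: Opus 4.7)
The plan is to transfer the circular axiom from $\cM_1$ to $\cM_2$ using the fact that powers of $f^{\cM_2}$ are $Z$-close to the corresponding powers of $f^{\cM_1}$ (\Cref{ff'Fin}), combined with the fact that the cyclic order $C_<$ is insensitive to $\sim_Z$-perturbations on three pairwise $Z$-separated elements (\Cref{cyclicInfPreservesCyclic}).

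First, I would fix $m>n>0$ and $z\in Z$ and set
\[a=(f^{\cM_1})^m(z),\ b=(f^{\cM_1})^n(z),\ c=z,\qquad a'=(f^{\cM_2})^m(z),\ b'=(f^{\cM_2})^n(z),\ c'=z.\]
By Axiom \labelcref{circularAxiom} of $T_1$ (\Cref{defT1}) applied inside $\cM_1$, we know $C_<(a,b,c)$ holds. By \Cref{ff'Fin}, $a\sim_Z a'$ and $b\sim_Z b'$, while $c=c'$ so trivially $c\sim_Z c'$. So to apply \Cref{cyclicInfPreservesCyclic} and conclude $C_<(a',b',c')$, which is exactly the statement, I only need to verify the three pairwise $Z$-separation conditions $a\not\sim_Z b$, $a\not\sim_Z c$, $b\not\sim_Z c$.

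The separations $a\not\sim_Z c$ and $b\not\sim_Z c$ are immediate from Axiom \labelcref{finf} of $T_1$ applied to $z\in Z$ with exponents $m$ and $n$ respectively. For $a\not\sim_Z b$, I would use the Abelian group structure from \Cref{AbelianInT1} to write
\[a=(f^{\cM_1})^m(z)=(f^{\cM_1})^{m-n}\bigl((f^{\cM_1})^n(z)\bigr)=(f^{\cM_1})^{m-n}(b),\]
and since $b\in Z$ (as $f^{\cM_1}$ preserves $Z$ by Axioms \labelcref{fc1c2,fc2c4} of \Cref{defT1}) and $m-n\geq 1$, Axiom \labelcref{finf} of $T_1$ applied to $b$ with exponent $m-n$ gives $a\not\sim_Z b$.

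There is no real obstacle here; the lemma is essentially a reduction to the already-axiomatized circular behaviour of $f^{\cM_1}$, and the whole point of \Cref{cyclicInfPreservesCyclic} and the estimate \Cref{ff'Fin} is to enable precisely this kind of transfer. The only minor bookkeeping is making sure $m-n\geq 1$ so that Axiom \labelcref{finf} is applicable, which follows from the hypothesis $m>n>0$.
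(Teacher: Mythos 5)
Your proof is correct and follows essentially the same route as the paper's: transfer Axiom \labelcref{circularAxiom} from $\cM_1$ to $\cM_2$ via \Cref{ff'Fin} and \Cref{cyclicInfPreservesCyclic}. You additionally spell out the pairwise $Z$-separation hypotheses (using Axiom \labelcref{finf} with exponents $m$, $n$, and $m-n$, the last applied at $f^n(z)\in Z$), which the paper leaves implicit; note that the appeal to \Cref{AbelianInT1} is unnecessary there, since $f^m=f^{m-n}\circ f^n$ is immediate.
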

\begin{proof} Let $m>n>0$ and $z\in Z$.
	By Axiom \labelcref{circularAxiom} in \Cref{defT1}, 
	\[C_<\left(\left(f^{\cM_1}\right)^m(z),\left(f^{\cM_1}\right)^n(z),z\right).\]
	By \Cref{ff'Fin}, 	\[\left(f^{\cM_2}\right)^n(z)\sim_Z\left(f^{\cM_1}\right)^n(z)\text{ and }\left(f^{\cM_2}\right)^m(z)\sim_Z\left(f^{\cM_1}\right)^m(z).\]
	and the \namecref{f'circularAxiom} follows from \Cref{cyclicInfPreservesCyclic}.
\end{proof}

\begin{lemma}\label{onetype}
	for any $n\in \bN$ and $z\in Z$:
	\[\cM_2\models f^{n+1}(z)<z\iiff\bigwedge_{i=0}^n ( f^{i}(z)> c_2).\]
\end{lemma}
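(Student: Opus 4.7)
The plan is to prove the base case $n = 0$---namely $f(z) < z \iff z > c_2$---and then derive the general statement by chaining via the cyclic order axiom \Cref{f'circularAxiom}.

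For the base case, the $(\Rightarrow)$ direction is immediate: if $z \le c_2$, then $z \in Z \cap [c_1, c_2]$, so \Cref{f'}(\ref{f'c1c2}) places $f(z)$ in $[c_3, c_4]$, giving $f(z) > c_2 \ge z$. For the converse, the subcase $z \ge c_3$ is also direct, since $f(z) \in [c_1, P(c_3))$ forces $f(z) < c_3 \le z$. The main obstacle is the subcase $z \in (c_2, c_3) \cap Z$, which I plan to handle by contradiction: assuming $f(z) \ge z$ together with \Cref{f'}(\ref{f'inf}) yields $f(z) > z$, which in turn places $f(z) \in (c_2, P(c_3))$ and hence $f^2(z) \in [c_1, P(c_3))$. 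Instantiating \Cref{f'circularAxiom} as $C_<(f^2(z), f(z), z)$ and using $f(z) > z$ to eliminate two of the three disjuncts in the definition of $C_<$ leaves $z < f^2(z) < f(z)$. The decisive step is then to apply $g$: both $f(z)$ and $f^2(z)$ lie in $Z \cap [c_1, P(c_3))$, on which $g$ is order-preserving as the inverse of the order-isomorphism of \Cref{f'}(\ref{f'c2c4}); hence $g(f^2(z)) < g(f(z))$. But $g \circ f$ is the identity by \Cref{f'}(\ref{M2axiomGF}), so this reads $f(z) < z$, contradicting the assumption.

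For the general case $n \ge 1$, the $(\Leftarrow)$ direction follows by applying the base case to each $y = f^i(z)$ with $y > c_2$, producing the strictly decreasing chain $z > f(z) > f^2(z) > \cdots > f^{n+1}(z)$. For the $(\Rightarrow)$ direction, assuming $f^{n+1}(z) < z$, \Cref{f'circularAxiom} applied as $C_<(f^{n+1}(z), f^i(z), z)$ for $1 \le i \le n$ forces the first disjunct of $C_<$ and therefore $f^i(z) < z$ for each such $i$. A second application of the form $C_<(f^{i+1}(z), f^i(z), z)$ for $1 \le i \le n$ then yields $f^{i+1}(z) < f^i(z)$; combined with $f(z) < z$ (already obtained from the case $i=1$ of the first application), we get $f^{i+1}(z) < f^i(z)$ for every $0 \le i \le n$, and applying the base case $(\Rightarrow)$ direction to each $y = f^i(z)$ delivers $f^i(z) > c_2$, as required.
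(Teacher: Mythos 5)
Your proof is correct, and its engine is the same as the paper's: \Cref{f'circularAxiom} is used to force $f^{i+1}(z)<f^i(z)<z$ and thereby reduce the whole statement to the case $n=0$ applied at each iterate $f^i(z)$. The paper packages this reduction as an induction on $n$, while you unwind it into two direct applications of the cyclic-order axiom; that difference is purely cosmetic. Where you genuinely add content is the case $n=0$: the paper dismisses it with ``holds by definition of $f$'', but for $z\in Z\cap(c_2,c_3)$ the definition alone (\Cref{f'c1c2,f'c2c4} of \Cref{f'}) does not exclude $f(z)\geq z$, and some further axiom is really needed there. Your treatment supplies exactly that: ruling out $f(z)=z$ via \Cref{f'inf}, extracting $z<f^2(z)<f(z)$ from $C_<\left(f^2(z),f(z),z\right)$, and then contradicting it by applying the order-preserving $g$ on $Z\cap[c_1,P(c_3))$ together with $g\circ f=\mathrm{id}$ (\Cref{M2axiomGF}); an equivalent shortcut is to apply the order isomorphism $f$ of \Cref{f'c2c4} directly to $z<f(z)$. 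So your write-up is, if anything, more complete than the paper's own proof at the base case, and matches it elsewhere.
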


\begin{proof} We prove the \namecref{onetype} by induction on $n$. For $n=0$ the claim holds by definition of $f$.
	For $n\geq 1$, By \Cref{f'circularAxiom}, $C_<(f^{n+1}(z),f^n(z),z)$. So 
	\[\cM_2\models f^{n+1}(z)<z \iiff f^{n+1}(z)<f^n(z)<z. \]
	By the induction hypothesis, $f^n(z)<z$ is equivalent to $\bigwedge_{i=0}^{n-1} ( f^{i}(z)> c_2)$ and $f^{n+1}(z)<f^n(z)$ is equivalent to $f^n(z)>c_2$.
\end{proof}

\begin{definition}\ 
	\begin{enumerate}
		\item $ \Phi:=\Set{\phi^n |\phi\in\Set{f,g},\ n\in \bN}$.
		\item $\Sigma:=\Set{ \sigma^m |  \sigma\in\set{S,P}, m\in \bN }$.
		\item  $\Pi: = \Set{\pi^{\epsilon} |  \epsilon\in\set{0,1} }$. 
		\item For any functions $h_1,\dots, h_n$ and $A,B\subseteq \Braket{h_1,\dots, h_n}_{cl}$, let $AB:=\Set{a\circ b | a\in A, b\in B}$.
	\end{enumerate}
\end{definition}
\begin{lemma}\label{f is much more}
	Let $n\geq 1$, $\psi_1,\psi_2\in \Sigma\Pi$, and $\square\in \Set{<,>,=}$. Then 
	\begin{enumerate}
		\item There are constant terms $\tau_1,\dots, \tau_k$ such that
		\[T_2\models f^n\circ\psi_1(x)\square \psi_2(x)\iiff \begin{bmatrix}
		\left(  \psi_1(x)\notin Z\land \psi_1(x)\square\psi_2(x) \right) & \lor \\
		\left(  \psi_1(x)\in Z, \psi_1(x),\psi_2(x)\notin \Set{\tau_1,\dots, \tau_k} \land  f^n\circ\psi_1(x)\square\psi_1(x) \right) & \lor \\
		\left(  \bigvee_{i=1}^k \left(\psi_1(x)=\tau_i \land  f^n(\tau_i)\square\psi_2(x)\right) \right) & \lor \\ 
		\left(  \bigvee_{i=1}^k \left(\psi_2(x)=\tau_i \land  f^n\circ\psi_1(x)\square\tau_i\right) \right)  \\ 
		\end{bmatrix}. \]
		\item There are constant terms $\sigma_1,\dots, \sigma_l$ such that
		\[T_2\models g^n\circ\psi_1(x)\square \psi_2(x)\iiff \begin{bmatrix}
		\left(  \psi_1(x)\notin Z\land \psi_1(x)\square\psi_2(x) \right) & \lor \\
		\left(  \psi_1(x),\psi_2(x)\in Z\setminus \Set{\sigma_1,\dots, \sigma_l} \land  g^n\circ\psi_1(x)\square\psi_1(x) \right) & \lor \\
		\left(  \bigvee_{i=1}^l \left(\psi_1(x)=\sigma_i \land  g^n(\sigma_i)\square\psi_2(x)\right) \right) & \lor \\ 
		\left(  \bigvee_{i=1}^l \left(\psi_2(x)=\sigma_i \land  g^n\circ\psi_1(x)\square\sigma_i\right) \right)  \\ 
		\end{bmatrix}. \]
	\end{enumerate}
\end{lemma}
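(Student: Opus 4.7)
The plan is to verify the four-way disjunction on the right-hand side case by case. The first disjunct, $\psi_1(x)\notin Z$, follows immediately from \Cref{f'Id}: since $f$ (and hence $f^n$) fixes every element of $M\setminus Z$, we have $f^n\circ\psi_1(x)=\psi_1(x)$, and the comparison with $\psi_2(x)$ becomes $\psi_1(x)\square\psi_2(x)$. The third and fourth disjuncts are pure substitution: if $\psi_1(x)=\tau_i$ then $f^n\circ\psi_1(x)=f^n(\tau_i)$; symmetrically for $\psi_2(x)=\tau_i$. The real content is the second disjunct, where assuming $\psi_1(x)\in Z$ and both $\psi_1(x),\psi_2(x)$ avoid the exceptional list, I must show
\[ f^n\circ\psi_1(x)\ \square\ \psi_2(x)\ \iff\ f^n\circ\psi_1(x)\ \square\ \psi_1(x). \]

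The exceptional constants are produced as follows. Since $\psi_1,\psi_2\in\Sigma\Pi\subseteq\Braket{S,P,\pi}_{cl}$, apply \Cref{finiteInZ} to each of $\psi_1$ and $\psi_2$ and let $\Set{\tau_1,\dots,\tau_k}$ be the union of the two resulting exception sets. For any $x$ with $\psi_1(x),\psi_2(x)\notin\Set{\tau_i}$, the linear intervals between $x$ and $\psi_1(x)$, and between $x$ and $\psi_2(x)$, each contain only finitely many elements of $Z$. Translated into the language of \Cref{secCyclic}, this gives $x\sim_Z\psi_1(x)$ and $x\sim_Z\psi_2(x)$, so by transitivity of $\sim_Z$ (\Cref{XcloseEq}), $\psi_1(x)\sim_Z\psi_2(x)$. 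Since $\psi_1(x)\in Z$ and $n\geq 1$, \Cref{f'inf} gives $f^n\circ\psi_1(x)\not\sim_Z\psi_1(x)$, and one more use of transitivity yields $f^n\circ\psi_1(x)\not\sim_Z\psi_2(x)$.

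It then remains to prove the following elementary claim: if $a,c\in M$ satisfy $a\sim_Z c$, while $b\in M$ satisfies both $b\not\sim_Z a$ and $b\not\sim_Z c$, then $b\square a\iff b\square c$ for every $\square\in\Set{<,>,=}$. The case $\square$ is $=$ is immediate, since equality would force a $\sim_Z$ relation. For $\square$ is $<$, suppose for contradiction $b<a$ and $c\leq b$; since $b\neq c$, we get $c<b<a$, and applying \Cref{cyclicSeperation} to $C_<(c,-,a)$ one verifies that whichever of $Z\cap C_<(c,-,a)$ or $Z\cap C_<(a,-,c)$ witnesses $a\sim_Z c$ forces one of $Z\cap C_<(a,-,b)$, $Z\cap C_<(b,-,a)$ to be finite, contradicting $b\not\sim_Z a$. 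The case $\square$ is $>$ is symmetric. Instantiating with $a=\psi_1(x)$, $b=f^n\circ\psi_1(x)$, $c=\psi_2(x)$ closes the second disjunct. Part~(2) for $g^n$ is proved identically, with \Cref{f'inf} replaced by \Cref{g'inf}. The main technical obstacle is the clean bookkeeping of the exceptional $\tau_i$'s, especially when $\psi_2(x)\notin Z$ or $x\notin Z$, where $\psi_1(x)\sim_Z\psi_2(x)$ must be obtained through the intermediate point $x$; but \Cref{finiteInZ} is tailored to absorb exactly these edge cases into the constant-term list.
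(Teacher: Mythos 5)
Your overall scaffolding is the same as the paper's (split on $\psi_1(x)\notin Z$ via \Cref{f'Id}, exceptional constants from \Cref{finiteInZ}, substitution for the last two disjuncts, and \Cref{g'inf} for part (2)), but the pivotal ``elementary claim'' you reduce the second disjunct to is false as stated, and its proof sketch breaks exactly where it matters. The hypothesis $a\sim_Z c$ only says that \emph{one} of the two cyclic arcs between $a$ and $c$ meets $Z$ in a finite set, and that arc may be the outer one rather than the linear interval between $a$ and $c$; the paper's remark after \Cref{finiteInZstep} flags precisely this phenomenon ($c_1\sim_Z c_4$, yet infinitely many points of $Z$ lie between them). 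For a concrete counterexample inside any model of $T_2$: take $c<c_1$, $a>c_4$, and any $b$ with $c_2<b<c_3$. Then $Z\cap C_<(a,-,c)=\emptyset$, so $a\sim_Z c$; both arcs between $b$ and $a$, and between $b$ and $c$, contain either $Z\cap(c_1,c_2)$ or $Z\cap(c_3,c_4)$, so $b\not\sim_Z a$ and $b\not\sim_Z c$; but $b<a$ while $b>c$, so $b\,\square\,a\iiff b\,\square\,c$ fails for $\square$ equal to $<$. Your verification via \Cref{cyclicSeperation} fails in the same spot: when the witness for $a\sim_Z c$ is the outer arc $C_<(a,-,c)$ (with $c<b<a$), the decomposition $C_<(a,-,b)=C_<(a,-,c)\cup\Set{c}\cup C_<(c,-,b)$ gives no control on $Z\cap C_<(c,-,b)$, so no contradiction with $b\not\sim_Z a$ or $b\not\sim_Z c$ is forced.

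The gap is created by your own weakening step. \Cref{finiteInZ} hands you the stronger, \emph{linear} statement: finitely many points of $Z$ between $x$ and $\psi_1(x)$ and between $x$ and $\psi_2(x)$, hence (covering the interval between $\psi_1(x)$ and $\psi_2(x)$ by these two) finitely many points of $Z$ between $\psi_1(x)$ and $\psi_2(x)$. Converting this to $\psi_1(x)\sim_Z\psi_2(x)$ discards exactly the information you need, and it cannot be recovered from $\sim_Z$ alone. Keep the linear statement instead: since $\psi_1(x)\in Z$ and $n\geq 1$, \Cref{f'inf} (resp.\ \Cref{g'inf}) yields infinitely many points of $Z$ in the linear interval between $\psi_1(x)$ and $f^n\circ\psi_1(x)$ (that interval is one of the two arcs), so $\psi_2(x)$ cannot lie on the far side of, or coincide with, $f^n\circ\psi_1(x)$ relative to $\psi_1(x)$, and $f^n\circ\psi_1(x)\,\square\,\psi_2(x)\iiff f^n\circ\psi_1(x)\,\square\,\psi_1(x)$ follows at once. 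This is the paper's argument; with that one repair your proof goes through.
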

\begin{proof} 
	\begin{enumerate}
		\item By \Cref{finiteInZ} applied twice, there are constant terms $\tau_1,\dots,\tau_k$ such that whenever $\psi(x)_1,\psi_2(x)\notin \Set{\tau_1,\dots, \tau_k}$, there are finitely many elements in $Z$ between $\psi(x)_1$ and $\psi_2(x)$.
		\begin{itemize}
			\item 	If $\psi_1(x)\notin Z$, then by \Cref{f'Id} of \Cref{f'}, $f^n\circ\psi_1(x)=\psi_1(x)$. In particular, 
			\[\cM_2\models  f^n\circ \psi_1(x)\square \psi_2(x) \iiff  \psi_1(x)\square \psi_2(x).   \]
			\item 	If $\psi_1(x)\in Z$, $\psi_1(x),\psi_2(x)\notin \Set{\tau_1,\dots, \tau_k}$, then by \Cref{f'}, \Cref{f'inf} there are infinitely many elements in $Z$ between $\psi_1(x)$ and $f^n\circ\psi_1(x)$. As there are only finitely many elements in $Z$ between $\psi_1(x)$ and $\psi_2(x)$, it follows that
			\[\cM_2\models  f^n\circ \psi_1(x)\square \psi_2(x) \iiff f^n\circ \psi_1(x)\square \psi_1(x).   \]
			
		\end{itemize}
		
		\item The proof is similar.
	\end{enumerate}
	
\end{proof}
\begin{definition}\ 
	\begin{enumerate}
		\item
		We define $\deg(F)$ for $F\in \Braket{f,g,S,P,\pi}_{cl}$ inductively, as follows:
		\begin{itemize}
			\item $\deg(Id)=\deg(S)=\deg(P)=\deg(\pi) = 0$
			\item $\deg(f)=\deg(g) = 1$.
			\item $\deg(F\circ G) = \deg(F)+\deg(G)$ for all $F,G\in \Braket{f,g,S,P,\pi}_{cl}$.
		\end{itemize}
		
		\emph{Notice that this is a syntactic definition, e.g., $\deg(F\circ G) = 2$.}
		
		\item  For any quantifier free $\cL_1$-formula $\theta(x,\bar{y})$ and variable $x$ we define $\rank(\theta, x)\in \left(\Set{-\infty}\cup\bN\right)^2$ by induction on the complexity of $\theta$:
		\begin{itemize}
			\item If $x$ does not occur in $\theta$, then $\rank(\theta,x) = (-\infty, -\infty)$.
			\item If $\theta$ is atomic of the form $F(x)\in Z$  then $\rank(\theta,x) = (-\infty, \deg(F))$.
			\item If $\theta$ is atomic of the form $F(x)\square \tau$ where $F\in \Braket{f,g,S,P}_{cl}$, $\square\in \Set{<,>,=}$, and $\tau$ is an $\cL_1$-term such that $x$ does not occur in $\tau$, then $\rank(\theta,x) = (-\infty, \deg(F))$.
			\item  If $\theta$ is atomic of the form $F(x)\square G(x)$ where $F, G\in \Braket{f,g,S,P}_{cl}$, $\square\in \Set{<,>,=}$, and $\deg(F)\leq \deg(G)$, then $\rank(\theta,x) = (\deg(F), \deg(G))$.
			\item If $\theta$ is a Boolean combination of atomic formulas $\theta_1,\dots, \theta_k$, then
			$\rank(\theta,x)$ is the lexicographic maximum of $\Set{\rank(\theta_i,x)}_{i=1}^k$.
		\end{itemize} 
	\end{enumerate}
\end{definition}

\begin{definition}\ 
	A quantifier free $\cL_1$-formula $\theta(x,\bar{y})$ is \emph{$x$-corrected} if any term $F(x)$ appearing in $\theta$ belongs to $\Phi\Sigma\Pi$.
\end{definition}

\begin{lemma}\label{reduceToFSP} 
	For any quantifier free $\cL_1$-formula $\vphi$ and variable $x$, there is some $x$-corrected formula $\phi$ such that $\rank(\vphi,x)\leq\rank(\phi,x)$ and $T_2\models \vphi\iiff\phi$.
\end{lemma}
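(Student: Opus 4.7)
The plan is to induct on the structure of $\vphi$. For Boolean combinations, apply the induction hypothesis to each atomic subformula and reassemble; since $\rank$ of a Boolean combination is the lex max of the ranks of its atomic subformulas, this step preserves the required rank inequality. Hence it suffices to consider atomic $\vphi$.

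Because $\cL_1$ has only unary function symbols, every term in the variable $x$ that occurs in $\vphi$ is a composition $H_k\circ\cdots\circ H_1(x)$ with each $H_i\in\Set{f,g,S,P,\pi}$. I would reduce each such term to an element of $\Phi\Sigma\Pi$ by case-splitting on whether $x\in Z$. On the branch $x\in Z$, every intermediate value stays in $Z$: the functions $f,g,S,P$ preserve $Z$ and $\pi$ lands in $Z$ by Axioms \labelcref{axiomPi,axiomS,fc1c2,fc2c4}. Since $\pi\upharpoonright Z$ is the identity, all occurrences of $\pi$ in the composition can be deleted, and \Cref{fgSPAbelian} then rewrites the remaining composition of $f,g,S,P$ as $\phi^n\sigma^m(x)\in\Phi\Sigma\Pi$ modulo finitely many exceptional constants. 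On the branch $x\notin Z$, Axioms \labelcref{axiomS,fId} together with \Cref{f'}\labelcref{f'Id} show that $f,g,S,P$ all fix $x$, so the composition acts trivially until the innermost $\pi$ (if any); after that $\pi$, the value is $\pi(x)\in Z$ and the previous case applies to the remainder, yielding $\phi^n\sigma^m\pi(x)\in\Phi\Sigma\Pi$, or $x$ itself if no $\pi$ occurs. In either branch the resulting term lies in $\Phi\Sigma\Pi$, and the finitely many exceptional inputs produced by the $\approx$-reductions are accommodated by additional Boolean branches in which $x$ is substituted by the corresponding constant term.

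To secure $\rank(\vphi,x)\leq\rank(\phi,x)$, I pad: the abelian normalization may strictly decrease the syntactic degree, so on a branch where a term has been rewritten as $F(x)\square\tau$ with $F\in\Phi\Sigma\Pi$, I apply an injective $\phi^k$ to both sides, replacing the atomic formula by $(\phi^k\circ F)(x)\square\phi^k(\tau)$; choosing $\phi$ to match the head of $F$ keeps $\phi^k\circ F\in\Phi\Sigma\Pi$ after absorbing $\phi^k$ into the existing exponent, and choosing $k$ large enough makes $\deg(\phi^k\circ F)\geq\deg(F_{\mathrm{orig}})$. Injectivity of $\phi^k$ modulo finite exceptions (again absorbed into further Boolean branches) secures the $T_2$-equivalence. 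The main obstacle will be the careful bookkeeping of the exceptional-constant case splits across nested reductions so that the final formula is syntactically $x$-corrected; the key tools are \Cref{fgSPAbelian} (abelian normal form), \Cref{finiteInZ} (bounding the exceptional constants introduced by $\approx$), and the characterizations in \Cref{fgIFF,elimination 1,fgIffTrivial,reduce to yin Z Lemma}.
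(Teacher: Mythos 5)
Your core reduction is the same as the paper's: reduce to atomic formulas, decompose each term in $x$ around the innermost $\pi$ (so that the term agrees with $F_1\circ\pi\circ F_2$ with $F_1,F_2\in\Braket{f,g,S,P}_{cl}$), case-split on $x\in Z$ versus $x\notin Z$ using that $f,g,S,P$ fix the complement of $Z$ and preserve $Z$ while $\pi$ maps into $Z$ and is the identity there, and then normalize the $\pi$-free part into $\Phi\Sigma$ via \Cref{fgSPAbelian}, absorbing the finitely many exceptional constant terms into extra disjuncts. (One small mis-citation: those exceptional constants come from the $\approx$-equivalences behind \Cref{fgSPAbelian}, not from \Cref{finiteInZ}, which is used later, in \Cref{f is much more}.)

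The padding step, however, is a genuine error. Applying $f^k$ to both sides of an atomic formula is not a $T_2$-equivalence when $\square\in\Set{<,>}$: $f$ preserves only the cyclic order on $Z$ (\Cref{fPreservesCyclic}), not the linear order. By \Cref{f'}, for $a\in Z\cap[c_1,c_2]$ and $b\in Z\cap(c_2,c_4]$ one has $a<b$ but $f(a)\in[c_3,c_4]$ and $f(b)\in[c_1,P(c_3))$, so $f(a)>f(b)$; this failure happens on an infinite set of inputs, so it cannot be repaired by finitely many exceptional branches (your injectivity argument only covers the case $\square$ being equality). Moreover the padding is unnecessary: the inequality in the statement is best read in the opposite direction. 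The paper's own proof produces an $x$-corrected $\phi$ with $\rank(\phi,x)\leq\rank(\vphi,x)$ --- the rank can strictly drop, e.g.\ $F(x)\in Z$ is replaced by $x\in Z$ or $\pi(x)\in Z$ of rank $(-\infty,0)$, and abelian cancellation such as $f\circ g$ lowers the syntactic degree --- and it is precisely this non-increase of rank that the induction in \Cref{reduceToSP} uses. Dropping the padding and keeping your main reduction yields essentially the paper's argument.
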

\begin{proof}
	A Boolean combination of $x$-corrected formulas is $x$-corrected, so we may assume $\vphi$ is atomic. 
	
	\begin{itemize}
		\item If $\vphi$ is of the form $F(x)\in Z$ for some $G\in \Braket{f,g,S,P,\pi}_{cl}\setminus \Braket{f,g,S,P}_{cl}$ then $T_2\models F(x)\in Z \iiff \pi(x)\in Z$.
		\item If $\vphi$ is of the form $F(x)\in Z$ for some $G\in  \Braket{f,g,S,P}$ then $T_2\models G(x)\in Z \iiff x\in Z$.
		
		\item If $\vphi$ of the form $F(x)\square \tau$ for some term $\tau$ and $\square\in\Set{<,>,=}$:
		
		\begin{itemize}
			\item If  $F\in \Braket{f,g,S,P}_{cl}$, then by \Cref{fgSPAbelian}, there is some $F'\in \Phi\Sigma$ with $\deg(F')=\deg(F)$, and constant terms $\tau_1,\dots, \tau_k$ such that $F(x)=F'(x)$f for all $x\notin \Set{\tau_1,\dots,\tau_k}$.
			
			So \[\begin{matrix}
			T_2\models & \left[F(x)\square\tau\right]\iiff
			& \begin{bmatrix}
			\left(\bigwedge_{i=1}^k x\neq \tau_k \land F'(x)\square \tau\right) & \lor \\
			\bigvee_{i=1}^k \left( x=\tau_i \land F(\tau_i)\square \tau \right)
			\end{bmatrix}
			\end{matrix}. \]
			
			\item If $F\in \Braket{f,g,S,P,\pi}_{cl}\setminus \Braket{f,g,S,P}_{cl}$, then there are $F_1, F_2\in \Braket{f,g,S,P}_{cl}$ such that $\cM_2\models F(x) = F_1\circ\pi\circ F_2(x)$ for all $x\in M$ and $\deg(F_1)\leq \deg(F_1\circ F_2)=\deg(F)$. So $\cM_2\models F(x)=F_1\circ F_2(x)$ for all $x\in Z$ and $\cM_2\models F(x)=F_1\circ\pi (x)$ for all $x\notin Z$. 
			So
			
			\[\begin{matrix}
			T_2\models & \left[F(x)\square \tau\right]\iiff  &
			\begin{bmatrix}
			\left(x\in Z \land F_1\circ F_2(x)\square \tau\right) \lor \\ 
			\left(x\notin Z \land F_1\circ\pi(x)\square \tau\right) \\ 
			\end{bmatrix} 
			\end{matrix}\]
			and $F_1,F_1\circ F_2\in \Braket{f,g,S,P}_{cl}$, so we can apply the previous case to get a formula where every term $F(x)$ to the left of $\square$ belongs to $\Phi\Sigma\Pi$.

		\end{itemize}
		If $x$ does not appear in $\tau$ we are done. Otherwise, if $\tau=G(x)$ for some term $G$, a symmetric argument applied to $G$ will an $x$-corrected formula $\phi$ equivalent to $\vphi$ as needed.
	\end{itemize}
\end{proof}

\begin{lemma}\label{rank infty n+1}\label{rank n+1 k}
	\begin{enumerate}
		\item 	Let $\vphi$ be an $x$-corrected atomic formula of rank $(-\infty, n+1)$ or of rank $(n+1,k)$ for some $n, k\in \bN$. Then there is some quantifier free formula $\phi$ such that $\rank(\phi,x)<\rank(\vphi,x)$ and $T_2\models \vphi\iiff\phi$.
	\end{enumerate}
\end{lemma}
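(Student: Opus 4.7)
The plan is to split on the syntactic form of $\vphi$ and peel one outer layer of $f$ or $g$ from the term in $x$, applying \Cref{fgIffTrivial} when $\vphi$ is a membership atom and \Cref{elimination 1} when $\vphi$ is a comparison atom. The key observation driving everything is that $\rank$ is lexicographic, so it suffices to strictly drop its first coordinate; what happens to the second coordinate is irrelevant.

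In the rank $(-\infty, n+1)$ case, $\vphi$ is either $F(x) \in Z$ or $F(x) \square \tau$ with $\tau$ not containing $x$, where $F = \phi \circ \sigma \circ \pi^\epsilon \in \Phi\Sigma\Pi$ has $\deg(\phi) = n+1$. For the first subcase I would iterate \Cref{fgIffTrivial} (which gives $f(y)\in Z \iff y\in Z$, and the same for $g$) to rewrite $\vphi$ as $\sigma \pi^\epsilon(x) \in Z$, of rank $(-\infty, 0)$. For the second subcase I would write $\phi = h \circ \phi'$ with $h \in \Set{f,g}$ and $\deg(\phi') = n$, then instantiate \Cref{elimination 1} with its ``$x$''-slot replaced by $\phi' \sigma \pi^\epsilon(x)$ and its ``$y$''-slot by $\tau$. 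Because $\tau$ is closed, the replacement terms $f(\tau)$, $g(\tau)$, and the constants $c_1, c_3, c_4, P(c_3)$ appearing in the corollary are all closed as well, so every atom in the resulting disjunction has rank at most $(-\infty, n)$.

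In the rank $(n+1, k)$ case, $\vphi$ is $F(x) \square G(x)$ with $\deg(F) = n+1 \leq k = \deg(G)$. I would decompose $F = h \circ F'$ with $h \in \Set{f,g}$ and $\deg(F') = n$, then apply \Cref{elimination 1} with ``$x$''-slot $F'(x)$ and ``$y$''-slot $G(x)$. The resulting disjuncts fall into three types: location conditions on $F'(x)$ (rank $(-\infty, n)$); comparisons of $F'(x)$ with $G(x)$ or with $h^{-1}(G(x))$ (rank $(n, k')$, where $k' \in \Set{k-1, k+1}$ according to whether $h^{-1}$ and the $\Phi$-factor of $G$ agree or disagree on $f$ versus $g$, as computed via \Cref{fgSPAbelian}); and comparisons of a constant with $G(x)$ (rank $(-\infty, k)$). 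In every case the first coordinate of the rank is at most $n < n+1$, so the lex order strictly decreases, even in the seemingly bad scenario where $k' = k+1$.

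The main technical obstacle I foresee is the bookkeeping: I must verify that none of the atoms produced by \Cref{elimination 1} reintroduces an $f$ or $g$ on the $F'(x)$ side (the corollary only ever moves $f$ or $g$ between the LHS and the RHS slot). Once that is checked, the argument is mechanical. The $\pi^\epsilon$-factor causes no additional trouble since $\pi(x)$ always lies in $Z$, so \Cref{fgIffTrivial} and \Cref{reduce to yin Z} let me fold any $\pi$ appearing on the $y$-side into a pure $Z$-element without raising degrees.
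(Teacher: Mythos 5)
The central step in both of your cases—applying \Cref{elimination 1} with the ``$y$''-slot filled by $\tau$ (resp.\ by $G(x)$)—is not justified: that corollary is stated, and is only true, under the hypothesis $y\in Z$. It genuinely fails otherwise: take $x=c_2$ and $y\in(c_3,c_4)\setminus Z$; then $f(c_2)=c_4>y$, while every disjunct on the right-hand side of \Cref{elimination 1} is false (the only live disjunct would require $c_2>g(y)=y$, which fails since $y>c_3$). Your premise that ``$\tau$ is closed'' is also a misreading: in an $x$-corrected atom $F(x)\square\tau$ the term $\tau$ merely does not contain $x$; it may contain the other free variables, so its value need not lie in $Z$ (if $\tau$ really were a constant term its value would automatically be in $Z$ and your application would be sound). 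Likewise, in the $(n+1,k)$ case the value of $G(x)$ lies outside $Z$ whenever $x\notin Z$ and $G$ has no $\pi$-factor. So a semantic case distinction is unavoidable, and it is exactly what the paper supplies: it splits into $\tau\in Z$ (where \Cref{elimination 1} applies), $\tau\notin Z\land H(x)\notin Z$ (handled by \Cref{fgIffTrivial}), and $\tau\notin Z\land H(x)\in Z$, where \Cref{reduce to yin Z} first replaces $\tau$ by $\pi(\tau)$ or $P\circ\pi(\tau)$, which do lie in $Z$, and only then invokes \Cref{elimination 1}. Your closing sentence does mention \Cref{reduce to yin Z}, but only to ``fold $\pi$ on the $y$-side'', which is not the issue (a $\pi$-factor on the $y$-side in fact guarantees membership in $Z$); what is missing is the case analysis on whether the $y$-side value belongs to $Z$ at all.

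Beyond this, your route through the $(n+1,k)$ case differs from the paper's: you apply \Cref{elimination 1} directly with ``$y$''-slot $G(x)$ and track the resulting ranks by hand, whereas the paper replaces $G(x)$ by $G(y)$ for a fresh variable $y$, invokes case (1) (which already contains the $Z$-membership case split), and then substitutes $x$ back for $y$, noting that every atom produced has first rank coordinate at most $n$, so the lexicographic rank drops below $(n+1,k)$ no matter what happens in the second coordinate. Your bookkeeping—including the observation that a second coordinate of $k+1$ is harmless because the first coordinate has dropped—is correct and matches the paper's in spirit; once the missing case split on membership in $Z$ is inserted (most cleanly by adopting the paper's fresh-variable reduction to case (1)), your argument goes through.
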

\begin{proof}
	\begin{enumerate}[label=(\arabic*)]
		\item\label{rank n+1 k ITEM} Assume $\rank(\vphi,x) = (-\infty, n+1)$.
		
		\textbf{If $\vphi$ is of the form $F\circ H(x)\in Z$} where $F\in \Phi, H\in \Sigma\Pi$,
		by \Cref{fgIffTrivial}, $\cM_2 \models F\circ H(x)\in Z\iff H(x)\in Z$ and $\rank(H(x)\in Z,x)=(-\infty, 0)$.
		\bigskip
		
		\textbf{If $\vphi$ is of the form  $F\circ H(x)\square \tau$} where $F\in \Phi, H\in \Phi\Sigma\Pi$, $\deg(F)=1,\deg(H)=n$, and $\tau$ is some $\cL_1$-term not containing $x$
		
		In which case, $F\in \Set{f,g}$ and
		\[\cM_2\models \left[F\circ H(x)\square \tau \right] \iiff 
		\begin{bmatrix}
		\left(F\circ H(x)\square \tau \land \tau\in Z \right)& \lor \\
		\left(F\circ H(x)\square \tau \land \tau\notin Z\land H(x)\notin Z\right) & \lor \\
		\left(F\circ H(x)\square \tau \land \tau\notin Z\land H(x)\in Z\right)
		\end{bmatrix}_. \]
		\begin{enumerate}
			\item\label{rank infty n+1 asdasd} Applying \Cref{elimination 1} to $H(x),\tau$, there is some $\phi'(x,\tau)$ $x$-corrected formula $\phi'(x,\tau)$ of rank $(-\infty, n)$ such that $\cM_2\models \phi'(x,\tau)\iiff F\circ H(x)\square \tau$ for all $\tau\in Z$. So 
			\[ \cM_2\models \phi'(x,\tau)\land \tau\in Z \iiff F\circ H(x)\square \tau \land  \tau\in Z \]
			
			\item Applying \Cref{reduce to yin Z} to $F\circ H(x)$ and $\tau$, we obtain that 
			\begin{align}\label{rank infty n+1 asdasdasdf}
			\left(F\circ H(x)\square \tau \land \tau\notin Z\land H(x)\in Z\right)
			\end{align}
			is equivalent to one of the following:
			\begin{align}
			& F\circ H(x) \in Z\land \tau \notin Z \land F\circ H(x)\geq \pi(\tau)\land \pi(\tau)\in Z  \label{rank infty n+1 blaEQ1} \\
			&  F\circ H(x)\in Z\land \tau\notin Z \land F\circ H(x)\leq P\circ\pi(\tau)\land P\circ\pi(\tau)\in Z \label{rank infty n+1 blaEQ2} \\
			& F\circ H(x)\in Z\land \tau\notin Z \land c_1=\tau
			\end{align}
			As in \labelcref{rank infty n+1 asdasd}, there are $\phi'_1(x,\pi(\tau))$ and $\phi'_2(x,P\circ\pi(\tau))$ of rank $(-\infty, n)$ equivalent to $F\circ H(x)\geq \pi(\tau)\land \pi(\tau)\in Z$ and $F\circ H(x)\leq P\circ\pi(\tau)\land P\circ\pi(\tau)\in Z$, respectively. So (\ref{rank infty n+1 asdasdasdf}) is equivalent to one of the following:
			\begin{align}
			&  H(x) \in Z\land \tau \notin Z \land \phi'_1(x,\pi(\tau)) \land  \pi(\tau)\in Z \label{rank infty n+1 blaEQ1A} \\
			&   H(x)\in Z\land \tau\notin Z \land \phi'_2(x,P\circ\pi(\tau)) \land  P\circ\pi(\tau)\in Z\label{rank infty n+1 blaEQ2A} \\
			&  H(x)\in Z\land \tau\notin Z \land c_1=\tau
			\end{align}
		\end{enumerate}
		an each is $x$-corrected of rank $(-\infty, n)$.

		By \Cref{fgIffTrivial}, $\left(F\circ H(x)\square \tau \land \tau\notin Z\land H(x)\notin Z\right)$ is equivalent to \[\left( H(x)\square \tau \land \tau\notin Z\land H(x)\notin Z\right)\] and the latter is an $x$-corrected formula of rank $(-\infty, n)$.
		
		\item Assume $\rank(\vphi,x) = (n+1, k)$. Then
		$\vphi$ is of the form  $F\circ H(x)\square G(x)$ where $F\in \Phi, H, G\in \Phi\Sigma\Pi$, $\deg(F)=1,\deg(H)=n, deg(G)=k$ and $n<k$. Replace $G(x)$ with $G(y)$ on the left of $\square$ to get $F\circ H(x)\square G(y)$. Apply \Cref{rank n+1 k ITEM} of this \namecref{rank infty n+1} to $F\circ H(x)\square G(y)$ and get some quantifier-free formula $\phi'(x,y)$ with $\rank(\phi'(x,y), x) \leq (-\infty,n)$. Replacing back, we get $\rank(\phi'(x,x),x) < (n+1,k)$ for any $k$ and 
		\[T_2\models  F\circ H(x)\square G(x)\iiff\phi'(x,x).\]
	\end{enumerate}
\end{proof}

\begin{lemma}\label{rank 0 k}
	Let $\vphi$ be an $x$-corrected atomic formula of rank $(0,k+1)$ for some $k\in \bN$. Then there is some quantifier free formula $\phi$ such that $\rank(\phi,x)<\rank(\vphi,x)$ and $T_2\models \vphi\iiff\phi$.
\end{lemma}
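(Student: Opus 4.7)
Since $\vphi$ is $x$-corrected atomic of rank $(0,k+1)$, by the rank definitions it must have the form $F(x)\,\square\,G(x)$ with $\square\in\Set{<,>,=}$, where $F\in\Sigma\Pi$, and $G\in\Phi\Sigma\Pi$ decomposes as $G=\phi^{k+1}\circ H$ for some $\phi\in\Set{f,g}$ and $H\in\Sigma\Pi$. After flipping sides (if necessary) to obtain $\phi^{k+1}\circ H(x)\,\square'\,F(x)$, this matches the hypothesis of \Cref{f is much more} with $n=k+1\geq 1$, $\psi_1:=H$, $\psi_2:=F$, both in $\Sigma\Pi$. My plan is to apply \Cref{f is much more} (item (1) if $\phi=f$, item (2) if $\phi=g$) to reduce $\vphi$ to a disjunction of four case-types, and then show each type has rank strictly below $(0,k+1)$ in the lexicographic order.

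Three of the four disjuncts are immediate: the case $H(x)\notin Z\land H(x)\,\square'\,F(x)$ is atomic of rank $(0,0)$; each case $H(x)=\tau_i\land\phi^{k+1}(\tau_i)\,\square'\,F(x)$ compares the \emph{constant term} $\phi^{k+1}(\tau_i)$ with $F(x)\in\Sigma\Pi$, hence has rank $(-\infty,0)$; and each case $F(x)=\tau_i\land\phi^{k+1}\circ H(x)\,\square'\,\tau_i$ has rank $(-\infty,k+1)$. All three satisfy $(-\infty,\cdot)<(0,k+1)$ or $(0,0)<(0,k+1)$.

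The substantive case is $H(x)\in Z\land H(x),F(x)\notin\Set{\tau_1,\dots,\tau_\ell}\land \phi^{k+1}\circ H(x)\,\square'\,H(x)$. Setting $z:=H(x)\in Z$, I will analyze $\phi^{k+1}(z)\,\square'\,z$ by cases on $\square'$. When $\square'$ is ``$=$'', the formula is unsatisfiable by \Cref{f'inf} (resp. \Cref{g'inf}), which give $\phi^{k+1}(z)\not\sim_Z z$ and hence $\phi^{k+1}(z)\neq z$. When $\phi=f$ and $\square'$ is ``$<$'', I apply \Cref{onetype} directly to rewrite the sub-formula as $\bigwedge_{i=0}^{k}f^i(z)>c_2$; each conjunct $f^i(H(x))>c_2$ is $x$-corrected (since $f^i\circ H\in\Phi\Sigma\Pi$) of rank $(-\infty,i)\leq(-\infty,k)$. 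The case $\square'$ is ``$>$'' is the complement of ``$<$'' and ``$=$''. For $\phi=g$, I will derive the analog of \Cref{onetype} for $g$ by the same induction: using \Cref{f'circularAxiom} together with the group relations of \Cref{fgSPAbelian} to obtain cyclic information $C_<(g^m(z),g^n(z),z)$ from the $f$-version, and rewriting the single-step base case $g(z)\,\square\,z$ for $z\in Z$ in terms of the constants $c_1,\dots,c_4$ via \Cref{f'c1c2,f'c2c4,M2axiomGF}.

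Substituting $z=H(x)$ back, each resulting atomic piece is $x$-corrected of rank at most $(-\infty,k)$, so the whole Boolean combination has rank at most $(-\infty,k)<(0,k+1)$. Taking the disjunction over all four cases from \Cref{f is much more} yields the desired $\phi$. The main obstacle I foresee is the $g$-analog of \Cref{onetype}: although morally it is just ``$g=f^{-1}$'', the cyclic ordering reverses, the base case must be identified in terms of the named constants (the role played by $c_2$ for $f$ will be played by $c_3$ for $g$), and the finitely many exceptional points on which $g$ genuinely differs from $f^{-1}$ must be absorbed into the constants $\tau_i$ produced by \Cref{f is much more}.
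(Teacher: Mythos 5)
Your reduction via \Cref{f is much more} and your handling of the $f$-case are essentially the paper's own argument: the paper too uses \Cref{f is much more} to reduce to $f^{k+1}\circ\psi(x)\,\square\,\psi(x)$ or $g^{k+1}\circ\psi(x)\,\square\,\psi(x)$ with $\psi\in\Sigma\Pi$, disposes of the low-rank disjuncts exactly as you do, and then applies \Cref{onetype}; your observations that ``$=$'' is ruled out by \Cref{f'} (\Cref{f'inf,g'inf}) and that ``$>$'' is handled by negation are consistent with the formula the paper writes down.

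The divergence is the $g$-case, and there your proposal has a genuine hole: the entire case rests on a ``$g$-analogue of \Cref{onetype}'' which you only sketch, and the cyclic statement you propose as its engine, $C_<(g^m(z),g^n(z),z)$ for $m>n>0$, is wrong as written --- pushing \Cref{f'circularAxiom} through the relation $[f][g]=1$ of \Cref{fgSPAbelian} yields $C_<(z,g^n(z),g^m(z))$, the reversed orientation (you say verbally that the orientation reverses, but the displayed relation does not reflect it). The analogue can be salvaged (with threshold $P(c_3)$ rather than $c_3$ in the base case and finitely many constant-term exceptions throughout), but it amounts to redoing the induction of \Cref{onetype} from scratch. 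The paper avoids this entirely: by \Cref{fgSPAbelian} there are finitely many constant terms off which $f^{k+1}\circ g^{k+1}(x)=x$, so $g^{k+1}\circ\psi(x)\,\square\,\psi(x)$ is rewritten as $g^{k+1}\circ\psi(x)\,\square\,f^{k+1}\circ g^{k+1}\circ\psi(x)$ and \Cref{onetype} is applied with $z:=g^{k+1}\circ\psi(x)$. The resulting atoms compare $f^{i}\circ g^{k+1}\circ\psi(x)$ with the constants $c_1,c_2$; their degrees exceed $k+1$, but their ranks have first coordinate $-\infty$ and hence lie strictly below $(0,k+1)$ in the lexicographic order, which is all the lemma needs. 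Adopting this substitution trick closes your argument without any new $g$-machinery; as it stands, your $g$-half is an unproved (and, in its key displayed statement, misoriented) sketch rather than a proof.
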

\begin{proof}
	By \Cref{f is much more}, we may assume $\vphi$ is either of the form $f^{k+1}\circ \psi(x)\square \psi(x)$ or of the form $g^{k+1}\circ \psi(x)\square \psi(x)$ for some $\psi\in \Sigma\Pi, \square\in \Set{<,>,=}$.
	\begin{enumerate}
		\item In case $\vphi$ is $f^{k+1}\circ \psi(x)\square \psi(x)$,  by \Cref{onetype}, 
		\begin{align}\label{asin1}
		T_2\models  & f^k\circ \psi(x)\square \psi(x) \iiff \\ & \label{eqA}
		\begin{bmatrix}
		\left(\psi(x)\in Z\land \bigvee_{i=0}^{k}(c_1\leq f^{i}\circ \psi(x)\leq c_2)\right) & \lor \\
		\left(\psi(x)\notin Z\land  \psi(x)=\psi(x) \right)		 
		\end{bmatrix}
		\end{align}
		and the formula in (\ref{eqA}) is of rank $(0,0)$.
		
		\item In case $\vphi$ is $g^{k+1}\circ \psi(x)\square \psi(x)$, by \Cref{fgSPAbelian}, there are finitely many constant terms $\tau_1,\dots, \tau_m$ such that 
		$f^{k+1}\circ g^{k+1}(x)=x$ for all $x\notin \Set{\tau_1,\dots,\tau_m}$.
		So 	
		\begin{align*}
		T_2\models & g^{k+1}\circ \psi(x)\square \psi(x) \iiff \\ &
		\begin{bmatrix}
		\left(\psi(x)\notin \Set{\tau_1,\dots, \tau_m}\land g^{k+1}\circ\psi(x)\square f^{k+1}\circ g^{k+1}\circ \psi(x)\right) & \lor \\
		\left(\bigvee_{i=1}^m \left(\psi(x)=\tau_i\land  g^{k+1} (\tau_i)=\tau_i \right)\right)		 
		\end{bmatrix}
		\end{align*} and $\rank\left(\bigvee_{i=0}^k (c_1\leq f^{i}\circ \psi(x)\leq c_2),\ x\right) = (-\infty, k-1)$
		
		Now, replacing $\psi(x)$ with $g^{k+1}\circ \psi(x)$ in (\ref{asin1}), we get
		\begin{align}
		T_2\models &  f^{k+1}\circ g^{k+1}\circ \psi(x)\square g^{k+1}\circ  \psi(x) \iiff \\ & \label{eqB}
		\begin{bmatrix}
		\left(g^k\circ \psi(x)\in Z\land \bigvee_{i=0}^{k}(c_1\leq f^{i}\circ g^{k+1}\circ \psi(x)\leq c_2)\right) & \lor \\
		\left(g^k \circ \psi(x)\notin Z\land  g^{k+1}\circ\psi(x)=g^{k+1}\circ \psi(x) \right)		 
		\end{bmatrix}_.
		\end{align} By noticing that $\vdash g^{k+1}\circ \psi(x)=g^{k+1}\circ \psi(x) \iiff x=x$,  the formula in (\ref{eqB}) is of rank $(0,0)$.
	\end{enumerate}
\end{proof}
\begin{lemma}\label{rank 0 0}
	Let $\vphi$ be an $x$-corrected atomic formula of rank $(0,0)$. Then there is some quantifier free formula $\phi$ such that $\rank(\phi,x)<\rank(\vphi,x)$ and $T_2\models \vphi\iiff\phi$.
\end{lemma}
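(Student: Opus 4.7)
My first observation is that an $x$-corrected atomic formula of rank $(0,0)$ must have the form $\sigma_F\circ\pi^{\epsilon_F}(x)\,\square\,\sigma_G\circ\pi^{\epsilon_G}(x)$ with $\sigma_F,\sigma_G\in\Sigma$ and $\epsilon_F,\epsilon_G\in\{0,1\}$: since the rank vanishes on both sides no $f$ or $g$ can appear, and $x$-correctedness forces the remaining $\sigma\circ\pi^\epsilon$ shape. In particular $\vphi$ is already an $\cL_0$-formula, and the entire reduction will live inside $T_0$, using only the identities collected in \Cref{conseqDLOZ,DLOZinZ,LU} together with \Cref{reduce to yin Z}. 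The strategy is to split $\vphi$ by cases on whether $x\in Z$.

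On $\{x\in Z\}$ we have $\pi(x)=x$, so $\vphi$ collapses to $\sigma_F(x)\,\square\,\sigma_G(x)$ with both sides in $Z$. Using $SP=PS=\mathrm{id}$, I would put $\sigma_F,\sigma_G$ in canonical form and rewrite the comparison as $\sigma(x')\,\square\,x'$ with $x':=\sigma_G(x)\in Z$ and $\sigma$ some iterate of $S$ or $P$; by \Cref{DLOZinZ}(3) this is equivalent to a closed comparison involving $c_2$ outside a finite exceptional set of values of $x$ near $c_1$ and $c_4$. On each exceptional value $\vphi$ specialises to a closed $\cL_0$-sentence, so this branch contributes only closed formulas and disjuncts of the form $x=\tau$ for constant terms $\tau$.

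On $\{x\notin Z\}$, every element of $\Sigma$ acts as the identity, while $\pi(x)\in Z$. The approach now depends on $(\epsilon_F,\epsilon_G)$: the $(0,0)$ subcase reduces to the trivial $x\,\square\,x$; the $(1,1)$ subcase puts both sides in $Z$ and is handled exactly as in the previous paragraph, applied with $\pi(x)$ in place of $x$; the mixed subcase $(1,0)$ (and by symmetry $(0,1)$) yields a comparison $\sigma_F(\pi(x))\,\square\,x$ with $\sigma_F(\pi(x))\in Z$ and $x\notin Z$, which by \Cref{reduce to yin Z} becomes an in-$Z$ comparison of $\sigma_F(\pi(x))$ with either $\pi(x)$ or $P\circ\pi(x)$, to which \Cref{DLOZinZ}(3) then applies with $\pi(x)$ in place of $x$.

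Combining all branches, $\vphi$ is equivalent to a Boolean combination of atomic subformulas of the following kinds: $x\in Z$, $\pi(x)\in Z$, $x=\tau$, $\pi(x)=\tau$, $F'(x)\,\square\,\tau$ with $F'\in\Sigma\Pi$ and $\tau$ a constant term, or a closed $\cL_0$-sentence. By the rank definition each of these atoms has rank at most $(-\infty,0)$, so the resulting $\phi$ satisfies $\rank(\phi,x)\le(-\infty,0)<(0,0)=\rank(\vphi,x)$ as required. The hard part will be pure bookkeeping: tracking the finitely many exceptional values of $x$ (respectively $\pi(x)$) near $c_1$ and $c_4$ that fall outside the hypothesis of \Cref{DLOZinZ}(3), and absorbing each such exception into a rank-$(-\infty,0)$ disjunct of the form $x=\tau$ (respectively $\pi(x)=\tau$). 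No genuinely new model-theoretic content is needed beyond what has already appeared in the paper.
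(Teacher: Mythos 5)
Your proposal is correct and follows essentially the same route as the paper: identify a rank-$(0,0)$ atom as a comparison of two $\Sigma\Pi$-terms, split on $x\in Z$ versus $x\notin Z$, and discharge each branch with the identities of \Cref{conseqDLOZ,DLOZinZ} (plus \Cref{reduce to yin Z} for the mixed case), absorbing the finitely many exceptional constants into $x=\tau$ disjuncts. Your bookkeeping of those exceptional values near $c_1,c_4$ is in fact slightly more careful than the paper's one-line reduction, and your bound $\rank(\phi,x)\le(-\infty,0)$ suffices just as well as the paper's claimed $(-\infty,-\infty)$.
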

\begin{proof}
	By \Cref{conseqDLOZ,DLOZinZ} we may assume $\vphi$ is of the form $\psi(x)\square x$ where $\psi\in \Sigma\Pi$ and $\square\in \Set{<,>,=}$. Now \[\vdash \left[\psi(x)\square x\right] \iiff \left[\left(\psi(x)\square x \land x\in Z\right) \lor \left(\psi(x)\square x \land x\notin Z\right) \right].\]
	By \Cref{DLOZinZ}, the right hand side is equivalent to a quantifier free formula of rank $(-\infty,-\infty)$.
\end{proof}

\begin{lemma}\label{reduceToSP}
	Let $\vphi$ be  a quantifier free formula with free variable $x$. Then there is some $x$-corrected formula $\phi$ such that $\rank(\phi, x)\leq  (-\infty,0)$ for some $k\in \bN$ and $T_2\models \vphi\iiff \phi$.
\end{lemma}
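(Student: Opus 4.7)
The plan is to proceed by induction on $\rank(\vphi, x)$ with respect to the lexicographic well-ordering on $(\Set{-\infty}\cup \bN)^2$, since all the real work has already been done in the preceding lemmas. First I would invoke \Cref{reduceToFSP} to replace $\vphi$ by an equivalent $x$-corrected formula, so without loss of generality $\vphi$ is $x$-corrected from the outset. If $\rank(\vphi,x)\leq (-\infty,0)$, we are done immediately, so suppose otherwise.

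The key step is the inductive reduction. Write $\vphi$ as a Boolean combination of $x$-corrected atomic formulas $\vphi_1,\dots,\vphi_m$, so that $\rank(\vphi,x)$ equals the lexicographic maximum of the ranks $\rank(\vphi_i,x)$. Any rank strictly greater than $(-\infty,0)$ falls into exactly one of the forms $(-\infty,n+1)$, $(n+1,k)$, $(0,k+1)$, or $(0,0)$ for $n,k\in\bN$, and these four cases are addressed respectively by \Cref{rank infty n+1} (both parts of its statement), \Cref{rank 0 k}, and \Cref{rank 0 0}, each producing an equivalent quantifier free formula of strictly smaller rank. Applying the relevant lemma to every atomic subformula $\vphi_i$ with $\rank(\vphi_i,x) > (-\infty,0)$ and substituting back into the Boolean combination yields a quantifier free $\vphi'$ equivalent to $\vphi$ with $\rank(\vphi',x) < \rank(\vphi,x)$. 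A further application of \Cref{reduceToFSP} re-corrects $\vphi'$, and the induction hypothesis finishes.

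The induction terminates because the lexicographic order on $(\Set{-\infty}\cup\bN)^2$ bounded below by $(-\infty,0)$ is well-founded. I do not anticipate a serious obstacle here; the proof is essentially bookkeeping on top of \Cref{reduceToFSP,rank infty n+1,rank 0 k,rank 0 0}. The two small points that must be checked are that the case analysis above is exhaustive for ranks strictly above $(-\infty,0)$ — immediate from the definition of $\rank$ together with the convention $\deg(F)\leq\deg(G)$ in the case $F(x)\square G(x)$ — and that substituting strictly smaller-rank replacements into a Boolean combination really does decrease the rank of the whole formula, which holds because $\rank$ is defined as the lexicographic maximum over atomic subformulas.
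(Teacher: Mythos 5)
Your proposal is correct and follows essentially the same route as the paper: well-founded induction on $\rank(\cdot,x)$ under the lexicographic order, first passing to an $x$-corrected formula via \Cref{reduceToFSP}, then reducing the rank of the (atomic constituents of the) formula via the exhaustive case split handled by \Cref{rank infty n+1}, \Cref{rank 0 k}, and \Cref{rank 0 0}, re-correcting with \Cref{reduceToFSP} and iterating. The only cosmetic difference is that the paper first reduces to a single atomic formula (noting a Boolean combination of rank at most $(-\infty,0)$ formulas has rank at most $(-\infty,0)$), whereas you replace all high-rank atomic subformulas inside the Boolean combination at once; these are interchangeable.
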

\begin{proof}
	By \Cref{reduceToFSP}  we may assume $\vphi$ is $x$-corrected. Since the lexicographic order on well-ordered sets is well-ordered,  by induction it suffices to show that if $\rank(\vphi,x)>(-\infty,0)$, then there is some $x$-corrected $\phi$ such that $\rank(\phi,x)<\rank(\vphi,x)$ and $T_2\models \vphi\iiff \phi$.
	As a Boolean combination of formulas of rank at most $(-\infty,0)$ is of rank at most $(-\infty,0)$ as well, we may further assume that $\vphi$ is atomic.
	\begin{itemize}
		\item If $\rank(\vphi,x)=(n+1,k)$ for some $n, k\in \bN$, then by \Cref{rank n+1 k} there is some quantifier free formula $\phi'$ such that $\rank(\phi',x)<\rank(\vphi,x)$.
		\item  If $\rank(\vphi,x)=(0,k+1)$ for some $k\in \bN$, then by \Cref{rank 0 k} there is some quantifier free formula $\phi'$ such that $\rank(\phi',x)<\rank(\vphi,x)$.
		\item  If $\rank(\vphi,x)=(0,0)$ for some $k\in \bN$, then by \Cref{rank 0 0} there is some quantifier free formula $\phi'$ such that $\rank(\phi',x)<\rank(\vphi,x)$.
		\item  If $\rank(\vphi,x)=(-\infty,n+1)$ for some $k\in \bN$, then by \Cref{rank infty n+1} there is some quantifier free formula $\phi'$ such that $\rank(\phi',x)<\rank(\vphi,x)$.
	\end{itemize}
	So in conclusion, whenever $\vphi$ is $x$-corrected and $\rank(\vphi,x)>(-\infty,0)$, there is some quantifier free formula $\phi'$ such that $T_2\models \vphi\iiff\phi'$ and $\rank(\phi',x)<\rank(\vphi,x)$. By \Cref{reduceToFSP}, there is some $x$-corrected formula $\phi$ such that  $T_2\models \vphi\iiff\phi'\iiff \phi$ and $\rank(\phi,x)=\rank(\phi',x)<\rank(\vphi,x)$.
\end{proof}

\begin{theorem}\label{T1QE}
	$T_2$  admits quantifier elimination.
\end{theorem}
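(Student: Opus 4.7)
The plan is to combine \Cref{reduceToSP} with the quantifier elimination for $T_0$ from \Cref{T0QE}. By the standard criterion for quantifier elimination, it suffices to eliminate a single existential quantifier from a formula of the form $\exists x \bigwedge_{i \in I} \theta_i(\bar{y}, x)$, where each $\theta_i$ is atomic or negated atomic.

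First, I would apply \Cref{reduceToSP} to the matrix $\bigwedge_{i \in I} \theta_i(\bar{y},x)$ to obtain an $x$-corrected quantifier-free formula $\phi(\bar{y},x)$ that is equivalent modulo $T_2$ and has $\rank(\phi, x) \leq (-\infty, 0)$. By the definition of $\rank$, every atomic subformula of $\phi$ in which $x$ occurs has rank of the form $(-\infty, d)$ with $d \leq 0$, so any subterm $F(x)$ appearing in $\phi$ must satisfy $\deg(F) \leq 0$. Since $\phi$ is $x$-corrected we have $F \in \Phi\Sigma\Pi$, and because $\deg(f) = \deg(g) = 1$ the $\Phi$-factor must be the identity. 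Consequently, every occurrence of $x$ in $\phi$ sits inside a term from $\Sigma\Pi$, built purely from the $\cL_0$-symbols $S, P, \pi$.

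Next, I would introduce fresh variables $\bar{z} = (z_1, \dots, z_m)$ to replace the finitely many $x$-free subterms $\tau_1(\bar{y}), \dots, \tau_m(\bar{y})$ of $\phi$, obtaining an $\cL_0$-formula $\phi'(\bar{z}, x)$ with $\phi(\bar{y}, x) = \phi'(\bar{\tau}(\bar{y}), x)$ syntactically. Applying \Cref{T0QE} to $\exists x\, \phi'(\bar{z}, x)$ yields a quantifier-free $\cL_0$-formula $\psi'(\bar{z})$ equivalent modulo $T_0$, hence modulo $T_2 \supseteq T_0$. Substituting $\bar{z} \mapsto \bar{\tau}(\bar{y})$ back produces a quantifier-free $\cL_1$-formula $\psi(\bar{y})$ equivalent to the original $\exists x \bigwedge_{i \in I} \theta_i(\bar{y}, x)$ modulo $T_2$, completing the argument.

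All the technical work has been absorbed into the long chain of lemmas culminating in \Cref{reduceToSP}; the present theorem then follows by a short reduction. The one point requiring genuine attention is verifying that the output of \Cref{reduceToSP} really does push all occurrences of $f$ and $g$ out of subterms involving $x$, since this is what allows a clean reduction to an $\exists x$ in the sublanguage $\cL_0$ and a direct appeal to \Cref{T0QE}. This is precisely what the bound $\rank(\phi, x) \leq (-\infty, 0)$ together with the $x$-correctness condition provides.
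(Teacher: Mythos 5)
Your proposal is correct and follows essentially the same route as the paper's proof: reduce via \Cref{reduceToSP} to an $x$-corrected formula of rank at most $(-\infty,0)$, observe that all terms involving $x$ then lie in $\Sigma\Pi$ so the formula is an $\cL_0$-formula in $x$ with $\cL_1$-terms substituted for the parameter positions, and eliminate the quantifier with \Cref{T0QE} before substituting those terms back. The only (harmless) difference is that you first invoke the standard reduction to conjunctions of literals, which the paper skips since \Cref{reduceToSP} applies to arbitrary quantifier-free formulas.
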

\begin{proof} Let $\vphi(x,y_1,\dots, y_k)$ be a quantifier free $\cL_1$-formula.
	It suffices to find a quantifier free formula $\phi(y_1,\dots, y_k)$ such that $ T_2\models \exists x\vphi(x,y_1,\dots, y_k)\iiff \phi(y_1,\dots, y_k)$. By \Cref{reduceToSP}, we may assume $\vphi$ is $x$-corrected and $\rank(\vphi,x)=(-\infty, 0)$. Since $\rank(\vphi,x)=(-\infty, 0)$, there is some quantifier-free $\cL_0$-formula $\vphi'(x,z_1,\dots, z_l)$ and $\cL_1$-terms $t_1,\dots,t_l$ with variables in $\Set{y_1,\dots, y_k}$ such that \[\vphi(x,y_1,\dots, y_k) = \vphi'(x,t_1,\dots,t_k).\] Now by \Cref{T0QE}, there is some quantifier-free formula $\phi(z_1,\dots,z_k)$ such that \[T_0\models \exists x \vphi'(x,z_1,\dots,z_l)\iiff \phi(x,z_1,\dots,z_l).\]
	As $T_2\supset T_0$, in conclusion,
	\[ T_0\models  \exists x \vphi(x,y_1,\dots,y_k) \iiff \exists x \vphi'(x,t_1,\dots,t_l)\iiff \phi(t_1,\dots,t_l) \]
	and $\phi(t_1,\dots,t_l)$ is a quantifier-free $\cL_1$-formula with variables from $\Set{y_1,\dots, y_k}$.
\end{proof}

\begin{corollary}\label{T1toT2}
	Every one-variable set definable in $\cM_2$ is definable in $\cM_0$.
\end{corollary}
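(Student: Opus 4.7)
My plan is to combine the quantifier elimination of \Cref{T1QE} with the normal form supplied by \Cref{reduceToSP}, and then verify that the resulting normal form is already an $\cL_0$-formula in the free variable.

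Concretely, given a one-variable definable set $A\subseteq M$ in $\cM_2$, I would start by writing $A=\vphi(\cM_2,\bar a)$ for some parameters $\bar a\in M^{|\bar y|}$ and some $\cL_1$-formula $\vphi(x;\bar y)$. By \Cref{T1QE} I may assume $\vphi$ is quantifier-free, and then by \Cref{reduceToSP} I may further replace $\vphi$ (equivalently over $T_2$) by an $x$-corrected quantifier-free formula $\phi(x;\bar y)$ with $\rank(\phi,x)\leq(-\infty,0)$.

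The key observation I would verify next is a purely syntactic one: because $\rank(\phi,x)\leq(-\infty,0)$, each atomic subformula of $\phi$ has one of the three shapes allowed by that rank, namely (i) an atomic formula in which $x$ does not occur, (ii) $F(x)\in Z$ with $\deg F=0$, or (iii) $F(x)\,\square\,\tau$ with $\deg F=0$, $\square\in\Set{<,>,=}$, and $x\notin\tau$. Since $\deg F=0$ forces $F\in\Braket{S,P,\pi}_{cl}$, every occurrence of $x$ inside $\phi$ sits inside an $\cL_0$-term; only the $x$-free subterms $\tau$ are allowed to mention $f$ or $g$.

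Finally, I would substitute the parameters $\bar a$ for $\bar y$ throughout $\phi$. Each subterm $\tau$ not containing $x$ evaluates to some fixed element $\tau^{\cM_1}(\bar a)\in M$, so the resulting formula is a Boolean combination of $\cL_0$-atomic formulas in $x$ with parameters from $M$, exhibiting $A$ as definable in $\cM_0$. The main effort of the argument has already been absorbed into \Cref{reduceToSP} and \Cref{T1QE}; the only remaining work is the syntactic bookkeeping above, so I do not anticipate a genuine obstacle.
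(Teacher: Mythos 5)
Your proposal is correct and follows the paper's own proof exactly: quantifier elimination (\Cref{T1QE}) followed by the rank reduction of \Cref{reduceToSP}, plus the syntactic observation that a rank-$\leq(-\infty,0)$ $x$-corrected formula only applies $\cL_0$-terms to $x$, so after substituting parameters (and evaluating the $x$-free $\cL_1$-terms to elements of $M$) it is an $\cL_0$-formula with parameters. The only nitpick is that the constant terms should be evaluated in $\cM_2$ rather than $\cM_1$ (you wrote $\tau^{\cM_1}(\bar a)$), though this does not affect the argument since either way they just supply parameters from $M$.
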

\begin{proof}
	By \Cref{T1QE}, every definable set in $\cM_2$ is quantifier-free definable. By \Cref{reduceToSP}, every quantifier-free one-variable set definable in $\cM_2$ is equivalent to an $x$-corrected formula of rank $\leq (-\infty,0)$, which in turn is definable (with parameters) in $\cM_0$.
\end{proof}

We conclude by articulating the answers to Questions \ref{DCTCpigeonhole} and \ref{IsThereAnAxiomatization}.
\begin{theorem}
	There is a definably complete type complete structure without the pigeonhole property.
\end{theorem}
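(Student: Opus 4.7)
The plan is to combine the quantifier-elimination work of the preceding sections with the fact that definable completeness and type completeness are first-order properties formulated entirely in terms of one-variable definable sets. I would take $\cN := \cM_2$ and verify the three required properties by leveraging \Cref{T1toT2}.

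First, I would observe that $\cM_0 \models T_0$ and $T_0 \supseteq T^{omin}_{\cL_0}$ by Axiom \labelcref{axiomTomin} in \Cref{defT0}, so $\cM_0$ is pseudo-o-minimal. In particular, $\cM_0$ is definably complete, locally o-minimal, and type complete. Since these three properties are axiomatized by first-order schemes each of whose instances quantifies over parameters but constrains only a single one-variable formula, they depend only on the collection of one-variable (parametrically) definable subsets of the universe. By \Cref{T1toT2}, every one-variable subset of $M$ definable in $\cM_2$ is definable in $\cM_0$, and conversely every $\cM_0$-definable set is $\cM_2$-definable since $\cL_0 \subset \cL_1$. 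Thus $\cM_2$ has the same one-variable (parametrically) definable sets as $\cM_0$, and so $\cM_2$ is definably complete and type complete.

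Next, I would exhibit the failure of the pigeonhole principle directly. The set $Z \subseteq M$ is definable, bounded (by Axiom \labelcref{axiomMinMax}, with minimum $c_1$ and maximum $c_4$), discretely ordered (Axiom \labelcref{axiomDiscrete}), and closed (Axiom \labelcref{axiomClosed}); hence $Z$ is pseudo-finite in the definably complete structure $\cM_2$. By \Cref{f'} \labelcref{M2axiomFinjective}, the definable map $f^{\cM_2} : Z \to Z$ is injective, and its image is $Z \setminus \Set{P(c_3)}$, so $f^{\cM_2}$ fails to be surjective on $Z$. This witnesses the failure of the pigeonhole principle in $\cM_2$.

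The only subtle point to flag is making sure that ``same one-variable definable sets'' genuinely suffices for transferring type completeness and definable completeness; this is immediate once one inspects the axiomatizing schemes, since each instance is of the form $\forall \bar{y}\,\tau_\vphi(x;\bar{y})$ with $\vphi$ ranging over one-variable partitioned formulae, exactly as described in the discussion preceding \Cref{DCTCpigeonhole} in the introduction. No further obstacle arises, as all of the hard technical work, in particular the quantifier elimination in $\cM_2$ given by \Cref{T1QE} and its reduction to $\cL_0$-definability in \Cref{T1toT2}, has already been established.
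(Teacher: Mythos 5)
Your proposal is correct and follows essentially the same route as the paper: invoke \Cref{T1toT2} to transfer definable completeness and type completeness from $\cM_0$ (which satisfies $T^{omin}_{\cL_0}$) to $\cM_2$, and witness the failure of the pigeonhole principle by the pseudo-finite set $Z$ together with $f^{\cM_2}\upharpoonright Z$, which is injective with image $Z\setminus\Set{P(c_3)}$ by \Cref{f'}. The only difference is that you spell out the details the paper leaves implicit (why the transfer works for parametrized one-variable schemes and why $Z$ is pseudo-finite), which is fine.
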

\begin{proof}
	The failure of the pigeonhole principle in $\cM_2$ is witnessed by $Z$ and $f \upharpoonright Z$. But by \Cref{T1toT2}, $\cM_0$ and $\cM_2$ have the same definable sets in one free variable. In particular, $\cM_2$ is definably complete and type complete.
\end{proof}
\begin{theorem}
	There are two ordered structures in the same language $\cM,\cN$ on the same universe, admitting the same order and the same definable subsets with $\cM$ being pseudo-o-minimal and $\cN$ not.
	
	In particular, the answer to \Cref{IsThereAnAxiomatization} is negative and there is no axiomatization of pseudo-o-minimality by first-order conditions on one-variable formulae only. Furthermore, there is no axiomatization of pseudo-o-minimality by any second order theory in the language $\cL_{\Def}:=\Set{<,\Def}$ where $\Def$ is interpreted as the definable one-variable subsets.
\end{theorem}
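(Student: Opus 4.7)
The plan is to assemble the pieces constructed in the preceding sections. First I would fix an $\cM_1\models T_1$ (which exists by \Cref{T0Consistent}), set $\cM_0:=\cM_1\upharpoonright \cL_0$, and build $\cM_2$ from $\cM_1$ as in \Cref{defT2}. Then I would define $\cN:=\cM_2$ and define $\cM$ to be the trivial expansion of $\cM_0$ to $\cL_1$ in which both $f$ and $g$ are interpreted as the identity map on $M$; both $\cM$ and $\cN$ thus live on $M$, share the language $\cL_1$, and share the order $<$.

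Next I would verify the three properties required: same one-variable definable sets, pseudo-o-minimality of $\cM$, and failure of pseudo-o-minimality of $\cN$. The first is immediate from \Cref{T1toT2} together with the fact that $\cM_0$ is a common reduct of $\cM$ and $\cN$: every one-variable set definable in $\cN$ is definable in $\cM_0$, and every one-variable set definable in $\cM_0$ is trivially definable in both $\cM$ and $\cN$. For $\cN$, the set $Z^{\cN}$ is closed, bounded, and discrete by Axioms \labelcref{axiomDiscrete,axiomClosed,axiomMinMax}, hence pseudo-finite, while $f^{\cM_2}\upharpoonright Z$ is injective and omits $P(c_3)$ by \Cref{f'} \labelcref{M2axiomFinjective}; this violates the pigeonhole principle and therefore pseudo-o-minimality.

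The main obstacle is establishing that $\cM$ is pseudo-o-minimal in the richer language $\cL_1$. For this I would argue that the trivial expansion by identities preserves pseudo-o-minimality: if $\cN_0$ is any o-minimal $\cL_0$-structure, then the $\cL_1$-expansion $(\cN_0,\mathrm{id},\mathrm{id})$ has the same definable sets as $\cN_0$ and hence is o-minimal as an $\cL_1$-structure, so it models $T^{omin}_{\cL_1}$. Since $\cM_0\models T^{omin}_{\cL_0}$, by the characterization of pseudo-o-minimality as elementary equivalence to an ultraproduct of o-minimal structures, $\cM_0$ is elementarily equivalent to $\prod_I \cN_{0,i}/\cU$ for some o-minimal $\cN_{0,i}$; applying \L{}o\'s's theorem to the expanded structures $(\cN_{0,i},\mathrm{id},\mathrm{id})$ yields that $(\cM_0,\mathrm{id},\mathrm{id})=\cM$ is elementarily equivalent to an ultraproduct of o-minimal $\cL_1$-structures, hence pseudo-o-minimal.

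For the second-order consequence, I would observe that any second-order sentence in $\cL_{\Def}=\Set{<,\Def}$ depends only on the underlying set, the interpretation of $<$, and the family of definable one-variable subsets, all of which coincide for $\cM$ and $\cN$; hence no such sentence can separate them, and since pseudo-o-minimality does separate them, it admits no axiomatization of that form. The same holds \emph{a fortiori} for the weaker schema of first-order conditions on one-variable formulae, answering \Cref{IsThereAnAxiomatization} negatively.
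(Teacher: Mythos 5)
Your proof is correct and follows essentially the same route as the paper: take $\cN=\cM_2$, take $\cM$ to be the trivial expansion of $\cM_0$ to $\cL_1$, use \Cref{T1toT2} (plus the fact that $\cM_0$ is a common reduct) for the equality of one-variable definable sets, and witness the failure of the pigeonhole principle in $\cN$ by $Z$ and $f\upharpoonright Z$. The only difference is that you explicitly verify, via the ultraproduct characterization and \L{}o\'s's theorem, that the trivial expansion of $\cM_0$ remains pseudo-o-minimal as an $\cL_1$-structure --- a point the paper's proof leaves implicit.
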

\begin{proof}
	$\cM_0$ is pseudo-o-minimal and $\cM_2$ is not pseudo-o-minimal as the failure of the pigeonhole principle is witnessed by $Z$ and $f \upharpoonright Z$. But $\cM_0\upharpoonright\Set{<} = \cM_2\upharpoonright\Set{<}$ and by \Cref{T1toT2}, $\cM_0$ and $\cM_2$ have the same definable sets in one free variable. We may now define $\cN$ to be $\cM_2$ and $\cM$ to be a trivial expansion of $\cM_0$ to $\cL_1$ (letting every function symbol be interpreted as the identity map and any relation symbol be interpreted as the $\emptyset$).
	
\end{proof}

\bibliographystyle{alpha}
\bibliography{pseudo}

\end{document}